\def\R{\mathbb{R}}
\def\B{\mathbb{B}}
\def\N{\mathbb{N}}
\def\Z{\mathbb{Z}}
\def\C{\mathbb{C}}
\def\Q{\mathbb{Q}}
\newcommand{\ben}{\begin{enumerate}}
\newcommand{\bit}{\begin{itemize}}
\newcommand{\een}{\end{enumerate}}
\newcommand{\eit}{\end{itemize}}
\newcommand{\ed}{\end{document}}
\def\cA{\mathcal{A}}
\def\cU{\mathcal{U}}
\def\cD{\mathcal{D}}
\def\cR{\mathcal{R}}
\def\cH{\mathcal{H}}
\def\cL{\mathcal{L}}
\def\cG{\mathcal{G}}
\def\cF{\mathcal{F}}
\let\hat=\widehat
\let\landa=\lambda
\let\alfa=\alpha
\let\parc=\partial
\def\ep{\varepsilon}
\def\landa{\lambda}
\def\flecha{\rightarrow}
\def\esiz{\langle}
\def\esde{\rangle}
\def\S{\Sigma}
\def\cte.{\mathop{\rm cte.}\nolimits}
\def\cosh{\mathop{\rm cosh }\nolimits}
\def\N{\mathbb{N}}
\def\B{\mathbb{B}}
\def\Q{\mathbb{Q}}
\def\R{\mathbb{R}}
\def\Z{\mathbb{Z}}
\def\C{\mathbb{C}}
\def\A{\mathbb{A}}
\def\S{\mathbb{S}}
\newfont{\bb}{msbm10 at 12pt}
\titleformat{\subsection}[runin]
{\bfseries} {\thesubsection{.}}{0.15cm}{}[.]
\titleformat{\subsubsection}[runin]
{\em}{\thesubsubsection{.}}{0.15cm}{}[.]
\newtheorem{theorem}{Theorem}[section]
\newtheorem{lemma}[theorem]{Lemma}
\newtheorem{proposition}[theorem]{Proposition}
\newtheorem{remark}[theorem]{Remark}
\newtheorem{corollary}[theorem]{Corollary}
\newtheorem{definition}[theorem]{Definition}
\theoremstyle{definition}
\numberwithin{equation}{section}
\numberwithin{figure}{section}
\begin{document}
\fancyhead[LO]{Free boundary minimal annuli}
\fancyhead[RE]{Isabel Fernández, Laurent Hauswirth, Pablo Mira}
\fancyhead[RO,LE]{\thepage}

\thispagestyle{empty}

\begin{center}
{\bf \LARGE Free boundary minimal annuli immersed \\[0.2cm] in the unit ball}
\vspace*{5mm}

\hspace{0.2cm} {\Large Isabel Fernández, Laurent Hauswirth and Pablo Mira}
\end{center}

%



\footnote[0]{
\noindent \emph{Mathematics Subject Classification}: 53A10. \\ \mbox{} \hspace{0.25cm} \emph{Keywords}: minimal surfaces, free boundary, critical catenoid, embeddedness, capillary surfaces.}

\vspace*{7mm}

\begin{quote}
{\small
\noindent {\bf Abstract}\hspace*{0.1cm}
We construct a family of compact free boundary minimal annuli immersed in the unit ball $\B^3$ of $\R^3$, the first such examples other than the critical catenoid. This solves a problem formulated by Nitsche in 1985. These annuli are symmetric with respect to two orthogonal planes and a finite group of rotations around an axis, and are foliated by spherical curvature lines. We show that the only free boundary minimal annulus embedded in $\B^3$ foliated by spherical curvature lines is the critical catenoid; in particular, the minimal annuli that we construct are not embedded. On the other hand, we also construct families of non-rotational compact embedded capillary minimal annuli in $\B^3$. Their existence solves in the negative a problem proposed by Wente in 1995.

\vspace*{0.1cm}

}
\end{quote}

\section{Introduction}
Amid the general theory of free boundary minimal surfaces, the case where the ambient space is the unit ball $\B^3$ of $\R^3$ is of special significance \cite{FS,Nit}. Here, we say that a compact minimal surface $\Sigma$ is \emph{free boundary} in $\B^3$ if it intersects $\parc \B^3$ orthogonally along $\parc \Sigma$. These surfaces appear as critical points of the area functional among all surfaces in $\B^3$ whose boundaries lie on $\parc \B^3$. After the seminal work of Fraser and Schoen \cite{FS0,FS}, the last decade has seen a great success in the construction of embedded free boundary minimal surfaces in $\B^3$ of different topological types, by using different methods; see \cite{CFS,CSW,FPZ,FS,KL,KW,KZ,Ke}.

A trivial example of such a free boundary surface is the flat equatorial disk of $\B^3$. In 1985, Nistche \cite{Nit} proved its topological uniqueness: \emph{any free boundary minimal disk immersed in $\B^3$ must be an equatorial disk}. 

The simplest non-trivial example of a free boundary minimal surface in $\B^3$ is the \emph{critical catenoid}, i.e., the only compact piece of a catenoid that intersects $\parc \B^3$ orthogonally along its boundary. This surface is rotational and has the topology of an annulus. The problem of the topological uniqueness of the critical catenoid among free boundary minimal annuli in $\B^3$ was already formulated by Nitsche \cite{Nit} in 1985, and it has been a relevant open problem of the theory for years, see e.g. \cite{L,F,So,Choe,LY,Pa,Nit,W2}. Our aim in this paper is to give a negative answer to this question. Specifically, we prove: 

\begin{theorem}\label{main}
There exists an infinite, countable, family of non-rotational free boundary minimal annuli immersed in $\B^3$. 

These annuli have one family of spherical curvature lines, and are not embedded. They are invariant under reflection through the planes $x_2=0$ and $x_3=0$, and under a finite group of rotations around the $x_3$-axis. The reflection with respect to $x_3=0$ interchanges the boundary components of the annulus. See Figures \ref{fig:eje1} and \ref{fig:eje2}.
\end{theorem}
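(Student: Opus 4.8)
The plan is to reduce the construction to a single ordinary differential equation dictated by the spherical-curvature-line hypothesis, and then to extract infinitely many solutions by a period (closing-up) argument indexed by the order of the rotational symmetry. The reduction begins with a reformulation via Joachimsthal's theorem. A round sphere is totally umbilic, so every curve on it is a line of curvature of the sphere; hence if $\Sigma$ is foliated by lines of curvature each lying on a sphere of the concentric family $S_r=\{|X|=r\}$ (centered at the origin, as forced by the symmetries and as realized by the critical catenoid), Joachimsthal's theorem forces $\Sigma$ to meet each $S_r$ at an angle that is constant along the leaf $\Sigma\cap S_r$. Writing $\rho=|X|$ and $h=\langle X,N\rangle$, this is equivalent to saying that $\cos\alpha=h/\rho$ is constant on each level set of $\rho$, i.e. that the support function is radial, $h=h(\rho)$. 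This is the structural fact I would exploit throughout.

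Next I would convert the radial-support-function condition into an ODE. Every minimal surface satisfies $\Delta_\Sigma|X|^2=4$, the Jacobi identity $\Delta_\Sigma h+|A|^2h=0$ for its support function, and the elementary relation $|\nabla_\Sigma\rho|^2=1-h^2/\rho^2$. Inserting $h=h(\rho)$ expresses $|A|^2$, and hence $K=-\tfrac12|A|^2$, as explicit functions of $\rho,h,h',h''$; matching this with the second fundamental form computed leafwise --- each leaf being a line of curvature on $S_\rho$ met at the angle $\alpha(\rho)$, with normal curvature read off from its geodesic curvature in $S_\rho$ --- produces a second-order ODE for the profile $h(\rho)$ (equivalently for $\alpha(\rho)$). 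I would then reconstruct the immersion from a solution by integrating the structure equations of an adapted moving frame: the profile ODE controls the radial behaviour, while a linear frame equation along each leaf controls the shape of the spherical curvature line and carries a rotation number. It is precisely this rotation number that will allow the leaves to be non-circular, $n$-lobed closed curves, so that the resulting surface is non-rotational yet invariant under the group generated by the $2\pi/n$-rotation and the two reflections.

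With the ODE in hand I would impose the side conditions and count solutions. By the $x_3\mapsto-x_3$ symmetry the waist circle is a turning point $\rho=\rho_{\min}$ where $\alpha=0$ (so $S_{\rho_{\min}}$ is tangent to $\Sigma$), while the free-boundary condition $\Sigma\perp\partial\B^3$ reads $\langle N,\nu\rangle=0$, i.e. $h=0$ and $\alpha=\pi/2$, at the boundary radius. Since the profile ODE admits a first integral (a conserved flux along the radial direction), its solutions form a one-parameter family parametrized, say, by $\rho_{\min}$; after a homothety normalizing the first radius at which $\alpha=\pi/2$ to be $1$, the free-boundary and profile-symmetry requirements are satisfied automatically, and the only remaining constraint is that each leaf close up as an $n$-fold-symmetric spherical curve. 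This closing condition takes the form $P(\rho_{\min})=2\pi/n$, where $P$ is a period function given by an integral of the profile data over $[\rho_{\min},1]$. The goal is then to show that $P$ sweeps out an interval of values containing $2\pi/n$ for every integer $n$ beyond some $n_0$, which yields the asserted infinite countable family; the rotationally symmetric (circular-leaf) limit is exactly the critical catenoid, which anchors the analysis and confirms that the finite-$n$ members are genuinely non-rotational.

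The last step is verification: the immersions are annuli whose two boundary circles lie on $\partial\B^3$ and are exchanged by $x_3\mapsto-x_3$ (the profile being symmetric about the waist), they are minimal and free boundary by construction, and they carry the claimed symmetries. Non-embeddedness then follows at once from the classification quoted in the abstract --- the only embedded free boundary minimal annulus in $\B^3$ foliated by spherical curvature lines is the critical catenoid --- because our surfaces are non-rotational and hence distinct from it. The main obstacle I anticipate is the period analysis of the penultimate step: one must understand the period function $P$ precisely enough (through the elliptic-type integrals arising from the first integral of the profile ODE, and via careful asymptotics near the catenoid limit and near the ends of the admissible parameter range) to guarantee that the quantization $P(\rho_{\min})=2\pi/n$ is solvable for infinitely many $n$. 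This is a genuine shooting/period-map problem and is where the real work of the construction lies.
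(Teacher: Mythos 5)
Your reduction collapses at the very first structural step: you assume that the spherical curvature lines of the annulus lie on the \emph{concentric} family $S_r=\{|X|=r\}$ centered at the center of the ball, i.e.\ that the support function satisfies $h=h(\rho)$. Neither the symmetries nor the free boundary condition force this; they only force the centers of the spheres to lie on the $x_3$-axis, and in the actual examples these centers sweep out the \emph{entire} axis (the height $c_3(u)$ of the center of the sphere containing the leaf $\psi(u,\cdot)$ is a decreasing diffeomorphism onto $(-\infty,\infty)$; see Proposition \ref{monocen}). Worse, your ansatz is not merely restrictive --- it is empty of non-rotational examples. If $h=h(\rho)$ then $\nabla_\Sigma h=-A(X^{T})$ is parallel to $\nabla_\Sigma\rho$, so $X^{T}$ is a principal direction and the leaves $\{\rho=\mathrm{const}\}$ are curvature lines with principal curvature $\kappa_2$ constant on each leaf. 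Along such a leaf the curvature vector $\vec k$ satisfies $\langle\vec k,N\rangle=\kappa_2$ and $\langle\vec k,X/\rho\rangle=-1/\rho$, both constant, and $N$ and $X/\rho$ make the constant angle $\alpha(\rho)$; hence $\vec k$ has constant components in this frame, the geodesic curvature of the leaf inside $S_\rho$ is constant, and the leaf is a \emph{circle}. A minimal annulus foliated by circular (hence planar) curvature lines is a piece of a catenoid. So your profile ODE, first integral, and period function $P(\rho_{\min})=2\pi/n$ can only ever reproduce the critical catenoid; the ``$n$-lobed'' closed leaves you hope to obtain from a rotation number cannot appear in this class.

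The genuine difficulty, which your scheme hides, is that the free boundary condition constrains only the two boundary leaves, while the interior leaves sit on spheres of varying centers and radii and meet them at varying angles. One must therefore satisfy \emph{two} independent transcendental conditions: (i) the orthogonality sphere of the boundary leaf must have its center in the symmetry plane $x_3=0$ (so that, after a homothety, both boundary components lie orthogonally on the \emph{same} unit sphere), and (ii) the leaves must close up with a rational rotation number. This is why the paper works with a two-parameter family $(r_1,r_2)$ of Weierstrass data built from elliptic functions, proves that the nodal set of the height map $\mathfrak h(r_1,r_2)=-c_3(\tau)$ contains a regular analytic curve $\gamma^*$ (Theorem \ref{th:hei}), and then shows the period map is non-constant along $\gamma^*$, so that it takes infinitely many rational values there. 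A one-parameter shooting in $\rho_{\min}$, as you propose, has no room to impose both conditions unless concentricity makes (i) automatic --- and, as explained above, concentricity forces rotational symmetry. Your closing observations (non-embeddedness via Theorem \ref{th:uni}, the symmetry count) would be fine if the construction produced anything, but the construction itself does not get off the ground.
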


A fundamental problem of the theory of free boundary minimal surfaces in $\B^3$ is the conjecture according to which the critical catenoid should be the only free boundary minimal annulus embedded in $\B^3$, see \cite{FL}, and \cite{L} for a detailed discussion. 
Theorem \ref{main} shows that the embeddedness assumption in the conjecture cannot be removed. Some partial affirmative answers to this conjecture have been recently obtained by several authors in \cite{AN,De,FS,KM,M,Seo,T}. 

The diversity of immersed minimal annuli provided by Theorem \ref{main} suggests to look for a potential counterexample to the critical catenoid conjecture with the same geometric structure, i.e., so that both boundary curves are elements of a foliation by spherical curvature lines of the annulus. However, we can actually show the following uniqueness result:

\begin{theorem}\label{th:uni}
The only free boundary minimal annulus embedded in $\B^3$ and foliated by spherical curvature lines is the critical catenoid.
\end{theorem}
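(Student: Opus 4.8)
The plan is to exploit the rigidity imposed by the foliation by spherical curvature lines together with the two conditions that force embeddedness and the free boundary property. Since the annulus $\Sigma$ is minimal and foliated by spherical curvature lines, each leaf lies in some sphere $S(t)$ (possibly a plane as a degenerate sphere), and along a curvature line the Gauss map and the second fundamental form are controlled. The first step is to set up the structure theory: a surface foliated by curvature lines that are spherical admits, by a classical result in the spirit of Joachimsthal, a natural adapted parametrization in which the spheres $S(t)$ meet $\Sigma$ at a constant angle. I would write the metric in conformal or curvature-line coordinates $(u,v)$ with the leaves given by $t=\mathrm{const}$, and extract the ODE/PDE system that minimality imposes on the radii, centers, and the angle function of the family $S(t)$. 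The goal of this step is to reduce the geometry to a small number of functions of one variable parametrizing the one-parameter family of spheres.

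Next, I would use the two boundary components. Because $\Sigma$ is free boundary in $\B^3$, both boundary circles lie on $\partial\B^3=\S^2$ and meet it orthogonally; by hypothesis these boundary curves are themselves leaves of the spherical foliation, so they are spherical curvature lines lying on $\S^2$. The orthogonality (free boundary) condition then pins down the contact angle of the extreme spheres $S(t)$ with $\S^2$, and Joachimsthal's theorem guarantees that the contact angle of $\Sigma$ with each $S(t)$ is constant along each leaf. The heart of the argument is to combine the reflective and rotational symmetries produced by this structure with embeddedness: embeddedness forbids the leaves from crossing, which forces the centers of the spheres $S(t)$ to lie on a fixed line (the rotation axis) and the family to be nested. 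I would then argue that the only minimal surface foliated by a nested family of coaxial spherical (including planar) curvature lines, meeting $\partial\B^3$ orthogonally on both ends, is rotationally symmetric: the embeddedness plus the symmetry of the two boundary leaves forces the angle function and radius function to be symmetric about a middle leaf, and the minimality ODE together with these boundary data has, up to the rotational case, a unique solution.

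Once rotational symmetry is established, the conclusion is immediate: the only rotational free boundary minimal annulus in $\B^3$ is the critical catenoid, which is classical (the generating curve is a catenary, and the orthogonality condition selects the unique scaling giving the critical catenoid). So the final step is simply to invoke this classification.

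I expect the main obstacle to be the embeddedness-to-rotational reduction, i.e., showing that a foliation by spherical (not necessarily planar) curvature lines on an embedded minimal annulus must be coaxial and nested rather than, say, a family of tilted or off-center spheres. The delicate point is ruling out non-rotational embedded configurations: the immersed examples of Theorem~\ref{main} show that without embeddedness such configurations genuinely exist, so embeddedness must be used in an essential, global way—likely via a maximum-principle or Alexandrov-type reflection argument applied to the planes of symmetry $x_2=0$ and $x_3=0$, to conclude that the sphere centers cannot drift off the axis without creating self-intersections. Getting this global positivity/monotonicity argument to interact cleanly with the one-parameter ODE for the sphere family is where the real work lies.
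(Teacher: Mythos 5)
The central step of your plan --- that embeddedness, via an Alexandrov-type or nestedness argument, forces the family of spheres to be arranged so that the surface is rotational, reducing the problem to the classical rotational classification --- is a genuine gap, and results elsewhere in this paper show it cannot be filled the way you describe. First, the fact that the centers $c(u)$ of the spheres containing the curvature lines lie on a common line is automatic for \emph{any} minimal surface with spherical curvature lines, immersed or not, so it carries no embeddedness information. Second, Theorem \ref{capillary} produces embedded, \emph{non-rotational} minimal annuli in $\B^3$ foliated by spherical curvature lines, with boundary on $\partial\B^3$ and constant contact angle along each boundary component; these satisfy every ingredient of your reduction except exact orthogonality, so no argument of the form ``embedded $+$ nested coaxial spherical leaves $+$ constant contact angle at both ends $\Rightarrow$ rotational'' can succeed. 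Conversely, Theorem \ref{main} produces non-embedded annuli satisfying both the free boundary condition and the spherical foliation, so the ODE system for the sphere family together with the free boundary data does \emph{not} have a unique (rotational) solution, contrary to your claim that ``the minimality ODE together with these boundary data has, up to the rotational case, a unique solution.'' Embeddedness and orthogonality must therefore be combined in a far more specific way than a reflection or monotonicity argument on the sphere family.

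What the paper actually does is different in structure and never establishes rotational symmetry directly. It first proves (Theorem \ref{th:embedan}) that an embedded minimal annulus in $\B^3$ with boundary on $\S^2$ and spherical curvature lines has \emph{two planes of reflective symmetry}. Embeddedness enters only at one point: the central planar curvature line is a locally convex embedded closed planar curve, hence globally convex, so the Gauss map has rotation index $1$ along it; this constrains the period data of the elliptic ($\wp$, $\zeta$, $\sigma$) representation, and a computation using the Legendre relation shows that the symmetry integer $n$ satisfies $n>1$, yielding a dihedral group of vertical reflection planes. The free boundary hypothesis is then consumed by invoking Seo's theorem (two reflective symmetry planes force the critical catenoid), which rests on the Fraser--Schoen Steklov eigenvalue characterization and the Lima--Menezes two-piece property --- none of which appears in your outline. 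To complete your proof you would need either to import this external input after establishing the two symmetry planes, or to find a genuinely new argument; the Alexandrov reflection through $x_2=0$ and $x_3=0$ that you propose does not produce either.
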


Theorems \ref{main} and \ref{th:uni} give relevant insight not only towards the critical catenoid conjecture, but also about the general geometry of free boundary minimal annuli. One may regard our examples as free boundary versions of the constant mean curvature tori in space forms with planar or spherical curvature lines constructed in the 1980s by Wente \cite{W0} and then Abresch \cite{Ab} and Walter \cite{Wa1,Wa2}. In this sense, Theorem \ref{main} suggests in a natural way the interesting problem of classifying all free boundary minimal annuli immersed in $\B^3$. See Section \ref{subsec:discussion}.
The geometry of the examples in Theorem \ref{main} also has several formal similarities with the classical Riemann minimal examples foliated by circles in parallel planes; see Shiffman \cite{Shi} and Meeks-Perez-Ros \cite{MPR} for the fundamental uniqueness theorems of these Riemann examples.
\begin{figure}
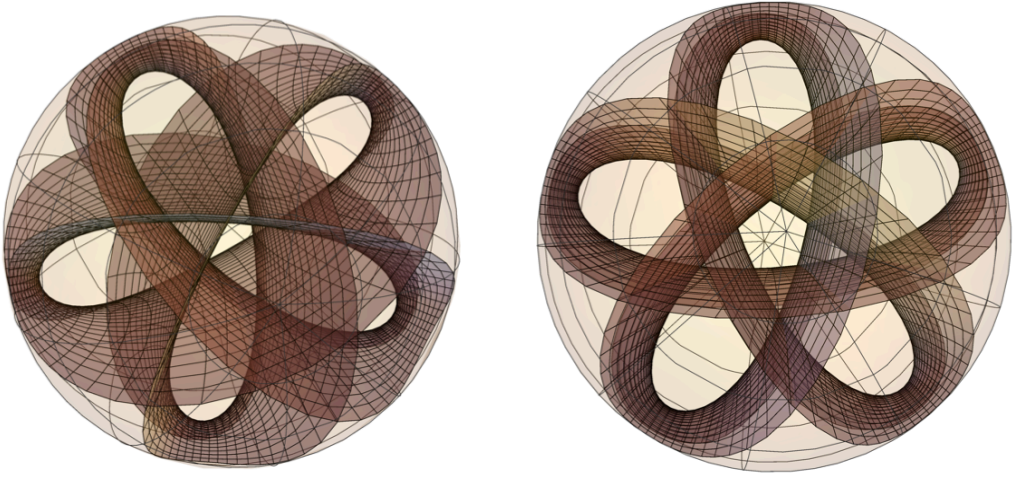

\begin{center}
\vspace{-0.9cm}
\includegraphics[height=10cm]{Ejemplo349.pdf} 
\includegraphics[height=10cm]{Ejemplo348.pdf} 
\vspace{-1.5cm}
\caption{Two views from above of a free boundary minimal annulus immersed in $\B^3$. The example is symmetric with respect to the $x_2=0$ and $x_3=0$ planes, and also with respect to the rotations with angles $6\pi k/5$, $k\in \{1,\dots, 5\}$, around the $x_3$-axis. Its symmetry group is isomorphic to the dihedral group $D_{10}$. The Gauss map along its central planar geodesic is a $3$-folded covering map of the great circle $\S^2\cap \{x_3=0\}$. See Figure \ref{fig:eje2} for two more views of the annulus.}\label{fig:eje1}
\end{center}
\end{figure}

The previous discussion can also be formulated in the more general \emph{capillary} context of compact constant mean curvature surfaces that meet $\parc \B^3$ at a constant angle along their boundary. By Nitsche's theorem \cite{Nit}, any capillary CMC disk immersed in $\B^3$ is an equatorial disk; see also Ros and Souam \cite{RS}. Regarding the topological uniqueness of capillary annuli, Wente \cite{W2} constructed for the case of non-zero mean curvature $H\neq 0$ examples of immersed, non-embedded, capillary and free boundary annuli in $\B^3$. For that, he used special properties of Abresch's solutions to the sinh-Gordon equation in \cite{Ab} that are not available in the minimal case that we treat here. Also in \cite{W2}, Wente asked whether any embedded capillary CMC annulus in $\B^3$ should be rotational. This can be seen as a capillary version of the critical catenoid conjecture. See also \cite{So,Pa}. 

In this paper we give a negative answer to Wente's problem; see Theorem \ref{capillary} for a more precise statement. See also Figure \ref{fig:eje3}.
\begin{theorem}\label{th:intro2}
There exist compact, embedded non-rotational minimal annuli in $\B^3$, with boundary contained in $\parc \B^3$, that are foliated by spherical curvature lines. In particular, they are embedded capillary minimal annuli in $\B^3$.
\end{theorem}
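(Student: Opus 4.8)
The plan is to reuse the analytic framework underlying Theorem~\ref{main}: a minimal annulus foliated by spherical curvature lines is encoded by a solution of the associated ODE system governing the one-parameter family of foliating spheres and the way the surface threads through them, so that closing the annulus amounts to a period problem together with the symmetry (closing) conditions. The essential observation is that the orthogonality (free boundary) requirement and the weaker constant-angle (capillary) requirement play very different roles in this moduli picture. Once the two boundary leaves are arranged --- by the reflection that interchanges the two ends together with the rotational symmetry --- to lie on a common sphere centered at the origin, which after a homothety we take to be $\parc\B^3$, the spherical-curvature-line structure forces the contact angle to be constant along each boundary component; hence \emph{every} closed-up annulus produced by the construction is automatically capillary, and imposing the free boundary condition merely cuts this family down by one dimension. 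I would therefore work directly with the full capillary family and only afterwards single out its embedded members.

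First I would set up the period/closing problem so as to produce annuli invariant under the two orthogonal reflections and under a group of rotations of order $n$ about the $x_3$-axis, exactly as in Theorem~\ref{main}, but now selecting the regime in which the surface advances by an angle $2\pi/n$ per period of the profile ODE and therefore wraps around the $x_3$-axis exactly once along the whole annulus, rather than the multiply-wound regime --- reflected in the $3$-fold covering of the Gauss map along the central geodesic --- responsible for the non-embeddedness of the examples of Theorem~\ref{main}. Concretely, I would exhibit a continuous branch of solutions of the closing system, parametrised say by the ratio of the extremal radii of the foliating spheres, along which the total angular advance per period is controlled, and apply the implicit function theorem (or an explicit monotonicity of the period integrals) to obtain, for each sufficiently large $n$, a solution whose contact angle is genuinely different from $\pi/2$.

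The main difficulty is embeddedness, which unlike immersedness and the period conditions is a global statement not visible at the level of the ODE. I would reduce it to the symmetry fundamental domain: it suffices to show that the portion of the annulus lying in one angular sector of width $2\pi/n$ between two consecutive rotation-symmetry planes is embedded and stays strictly inside its sector, after which the rotational and reflective copies tile $\B^3$ without overlap. Within a fundamental sector I expect the surface to be a graph --- either radially monotone over an annular base domain or expressible as a graph over one of the symmetry planes --- so that embeddedness follows from a monotonicity property of the profile produced by the ODE, together with the control of the angular width from the previous step. The delicate point is to guarantee that neighbouring foliating spheres do not force the fundamental piece to leave its sector or to fold back; here I would either argue by continuity from a nearby rotationally symmetric (catenoid- or nodoid-type) embedded model, for which the graphical property is transparent and stable under perturbations preserving the spherical-curvature-line structure, or control the relevant derivatives of the profile directly through the conserved quantities of the ODE.

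Finally, embeddedness together with Theorem~\ref{th:uni} confirms the qualitative picture and serves as a consistency check on the parameter selection above: since the only embedded free boundary annulus foliated by spherical curvature lines is the critical catenoid, the non-rotational embedded examples produced here \emph{must} meet $\parc\B^3$ at an angle different from $\pi/2$, so that they are genuine (non free boundary) capillary minimal annuli, as asserted.
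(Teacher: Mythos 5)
Your overall strategy coincides with the paper's: the capillary condition is indeed automatic once the annulus is cut along a pair of curvature lines lying on a single sphere centred in the plane $x_3=0$ (the existence of such a cut is Proposition \ref{monocen}, which makes the height $c_3(u)$ of the sphere centres a decreasing bijection onto $\R$); the ``wraps once around the axis'' regime is exactly the choice of the level curve ${\rm Per}^{-1}(1/n)$ of the period map, whose existence and regularity is Theorem \ref{th:peri}; and the paper's embeddedness argument is precisely your second alternative, continuity from a rotationally symmetric embedded model, namely the catenoid arising as the degenerate limit at the endpoint $(\bar r_n,\bar r_n)$ of the level curve on the diagonal $r_1=r_2$. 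Your closing observation that Theorem \ref{th:uni} forces the contact angle away from $\pi/2$ is correct and consistent with the paper.

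The genuine gap is in the embeddedness step. The convergence of the surfaces to the catenoid as $(r_1,r_2)\to(\bar r_n,\bar r_n)$ is only uniform on compact subsets of the parameter strip, while the strip itself degenerates: the real half-period satisfies $\omega_1\to\infty$ in this limit. The compact annulus is cut off at $u=\pm u_n^*$, where $u_n^*$ is defined \emph{implicitly} by the condition $c_3(u_n^*)=0$, and nothing a priori prevents $u_n^*$ from drifting to infinity with $\omega_1$; if it did, the compact annuli would escape every compact set on which convergence to the catenoid holds (and the full strip is \emph{not} embedded, having a flat end at $u=\omega_1$), so ``stability under perturbation'' would give nothing. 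The heart of the paper's proof of item (6) of Theorem \ref{capillary} is to rule this out: one computes the limit $\bar h(u)$ of $c_3(u)$ in the degenerate system of Section \ref{sec:dege}, shows $\bar h(u)\to-\infty$ as $u\to\infty$ using $\bar\beta(u)/\bar\alfa(u)\to-\infty$ along the degenerate orbit, and concludes that the $u_n^*$ stay bounded. Your proposal does not address this, and without it the continuity argument does not close. (Your first alternative --- tiling $\B^3$ by fundamental sectors and proving a radial-graph property --- is not what the paper does; if carried out it would actually yield more than the paper proves, since the paper only obtains embeddedness for $\mu$ in a neighbourhood of $0$ and leaves embeddedness of the whole family as a conjecture, but as written it is only a sketch of intent.)
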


\begin{figure}
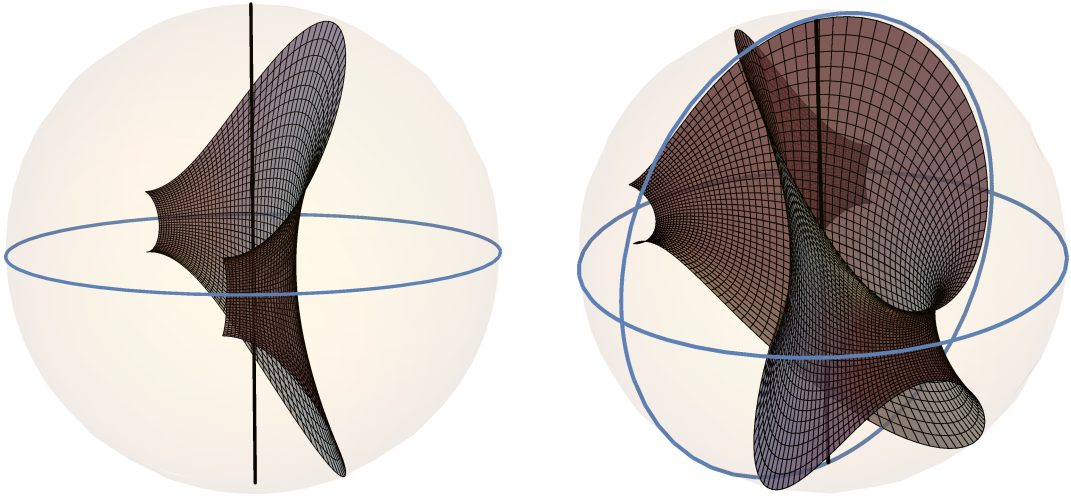

\begin{center}
\includegraphics[height=6.8cm]{PiezaFundamental-3.pdf} 
\includegraphics[height=6.8cm]{PiezaFundamental2.pdf} 
\caption{Left: an embedded fundamental piece of the free boundary minimal annulus in Figure \ref{fig:eje1}. It has two symmetry planes, and the whole annulus is obtained by the union of this piece and its rotations around the $x_3$-axis of angles $6\pi k /5$, $k=1,\dots, 4$. Right: union of the fundamental piece and its rotation of angle $6\pi /5$.}\label{fig:eje2}
\end{center}
\end{figure}

The general family of minimal surfaces in $\R^3$ with spherical curvature lines was described by Dobriner \cite{D} in the 19th century in terms of elliptic theta functions. Much more recently, in 1992, Wente \cite{W} recovered and reformulated Dobriner's classification using the solutions to a certain Hamiltonian planar system. Wente described these minimal surfaces as \emph{catenoids, perhaps covered infinitely often, from which a number of flat ends have been extruded}. Since these flat ends are placed along the planar curvature lines of the surface, this picture seems to forbid that such a surface could contain a compact free boundary minimal annulus in $\B^3$. In general, if a minimal annulus with spherical curvature lines has its two boundary curves on the same sphere, one would expect following Wente's description that it will have one or more flat ends in the middle.

The surprising realization that started this work is that, for \emph{some} minimal surfaces with spherical curvature lines, one can perform a phase shift of a half period in its Weierstrass data to avoid such flat ends. This allows the construction of compact minimal annuli with a central planar geodesic, similar to a catenoidal neck, but perhaps with an immersed dihedral \emph{flower} structure as the one depicted in Figure \ref{fig:eje1}. Still, in order to prove Theorem \ref{main}, one needs to control several aspects simultaneously: compactness, periods, center of the spheres that contain the curvature lines, and orthogonal intersection along the boundary.

Our construction also produces interesting non-compact examples.

\begin{corollary}\label{cor:intro}
There exist complete, non-compact minimal strips $\Sigma$ with free boundary in $\B^3$, foliated by spherical curvature lines. That is, $\parc \Sigma$ has two non-compact connected components, both of them contained in $\parc \B^3$, and $\Sigma$ intersects $\parc \B^3$ orthogonally along $\parc \Sigma$.
\end{corollary}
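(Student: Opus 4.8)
The plan is to obtain these non-compact examples from the very same construction underlying Theorem~\ref{main}, keeping every local feature but discarding the global closing-up condition. I would begin with the complete minimal immersion $f\colon\R\times[-v_*,v_*]\to\R^3$ produced there, whose leaves $\{v=\mathrm{const}\}$ are the spherical curvature lines, whose central leaf $\{v=0\}$ is a planar geodesic shaped like a catenoidal neck, and whose two extreme leaves $\{v=\pm v_*\}$ lie on the unit sphere $\partial\B^3=\S^2$. Along each extreme leaf---a curvature line of the surface and trivially of $\S^2$---Joachimsthal's theorem forces $\Sigma$ and $\S^2$ to meet at a constant angle, so the free boundary condition reduces to the single scalar requirement that this angle equal $\pi/2$. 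Crucially, this is imposed leaf-by-leaf and is completely independent of whether $f$ closes up in the $u$-direction.

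Next I would isolate the monodromy that separates the compact from the non-compact case. Over one $u$-period $U$ of the foliation, $f$ reproduces itself up to a rotation $R_\Phi$ of some angle $\Phi$ about the $x_3$-axis, so the induced metric is $U$-periodic in $u$. The immersion descends to a compact annulus in $\B^3$ precisely when $\Phi\in 2\pi\Q$: writing $\Phi=2\pi p/q$, exactly $q$ fundamental pieces glue into the dihedrally symmetric flower of Theorem~\ref{main}, which is what produces the countable family there. For the corollary I would instead select parameters with $\Phi\notin 2\pi\Q$. Then no finite union of fundamental pieces closes up, $f$ does not factor through any compact quotient, and it represents a complete non-compact minimal surface-with-boundary $\Sigma$ on the strip $\R\times[-v_*,v_*]$. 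Completeness is immediate, since $f$ is a regular immersion whose metric is $U$-periodic in $u$, hence bounded below on the compact piece $[0,U]\times[-v_*,v_*]$ and so everywhere, giving infinite length in the $u$-direction. Its boundary is the image of the two lines $\{v=\pm v_*\}$---two non-compact components, each spiralling around the $x_3$-axis and interchanged by the reflection through $x_3=0$---both contained in $\S^2$ and met orthogonally by the first paragraph.

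The delicate point, which I expect to be the main obstacle, is realising an irrational $\Phi$ while still enforcing the constraints that in the compact setting held only incidentally at rational $\Phi$: that the extreme leaves lie exactly on the unit sphere, that they meet it orthogonally, and that the central leaf is a planar geodesic. After normalising the scale, I would phrase these as finitely many real-analytic equations on the construction parameters, cutting out a subset $\cC$ on which $\Phi$ restricts to a real-analytic function. Since the compact examples of Theorem~\ref{main} already populate $\cC$ at infinitely many distinct rational values of $\Phi/2\pi$, this function is non-constant; its image therefore contains an interval, hence a full-measure set of irrational values, each of which yields a surface with all the asserted properties. The half-period phase shift in the Weierstrass data used to prove Theorem~\ref{main} must be carried over verbatim, as it is exactly what suppresses the flat ends that Wente's description would otherwise place along the central curvature lines; because it governs the shape of the leaves rather than the monodromy $\Phi$, it remains compatible with the irrational choice.
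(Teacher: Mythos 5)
Your overall strategy coincides with the paper's: the free boundary condition is a leaf-by-leaf constraint, independent of the closing of the period, cutting out a locus in parameter space, and the non-compact strips are obtained by choosing a point of that locus where the monodromy angle $\Phi=2\pi\,{\rm Per}(r_1,r_2)$ is irrational. Indeed the paper's proof is exactly one line: take $(r_1,r_2)\in\gamma^*$ with ${\rm Per}(r_1,r_2)\in\R\setminus\Q$ and invoke Proposition~\ref{2centers} and Remark~\ref{rem:fb}. Your first two paragraphs (constant-angle intersection along each spherical curvature line, compactness $\Leftrightarrow$ $\Phi\in 2\pi\Q$, completeness from periodicity of the metric) are sound and match Lemmas~\ref{lemper} and \ref{lem:sigen}.

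The gap is at what you yourself flag as the delicate point. You argue: Theorem~\ref{main} populates the constraint set $\cC$ at infinitely many distinct rational values of $\Phi/2\pi$, hence $\Phi$ is a non-constant real-analytic function on $\cC$, hence its image contains an interval and therefore irrational values. Neither inference closes. First, the statement of Theorem~\ref{main} does not assert that its countably many annuli have pairwise distinct periods; distinct points of $\mathfrak{h}^{-1}(0)$ can carry the same rational period (the paper notes this explicitly in Section~\ref{subsec:discussion}), and in fact the countable infinitude in Theorem~\ref{main} is itself \emph{deduced} from the density of rational values of ${\rm Per}$ along $\gamma^*$, i.e.\ from the very non-constancy you are trying to extract --- so this route is either unavailable or circular. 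Second, even granting infinitely many distinct rational values on $\cC$, a non-constant real-analytic function on a set cut out by finitely many real-analytic equations need not have an interval in its image: the set could consist of infinitely many isolated points or of arcs on each of which the function is constant (compare $\{\sin \pi x=0\}\subset\R$ with the function $x$). What is actually needed --- and what constitutes the real content behind both Theorem~\ref{main} and this corollary --- is items (3) and (4) of Theorem~\ref{th:hei}: the nodal set $\mathfrak{h}^{-1}(0)$ contains a \emph{regular, connected} real-analytic curve $\gamma^*$ emanating from $(r^*,r^*)$, and ${\rm Per}$ is \emph{not constant along} $\gamma^*$. The latter is proved by contradiction: if ${\rm Per}$ were constant on $\gamma^*$, then $\gamma^*$ would lie in a single level curve ${\rm Per}^{-1}(c^*)$, forcing $\beta(u^*)\equiv 0$ along that entire level curve by analyticity, which is refuted by an explicit evaluation at the point of ${\rm Per}^{-1}(c^*)$ on the line $r_1-r_2=-8$. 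Your proposal does not supply a substitute for this step, so as written it does not establish that any irrational value of $\Phi/2\pi$ is actually attained on the free boundary locus.
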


\begin{figure}
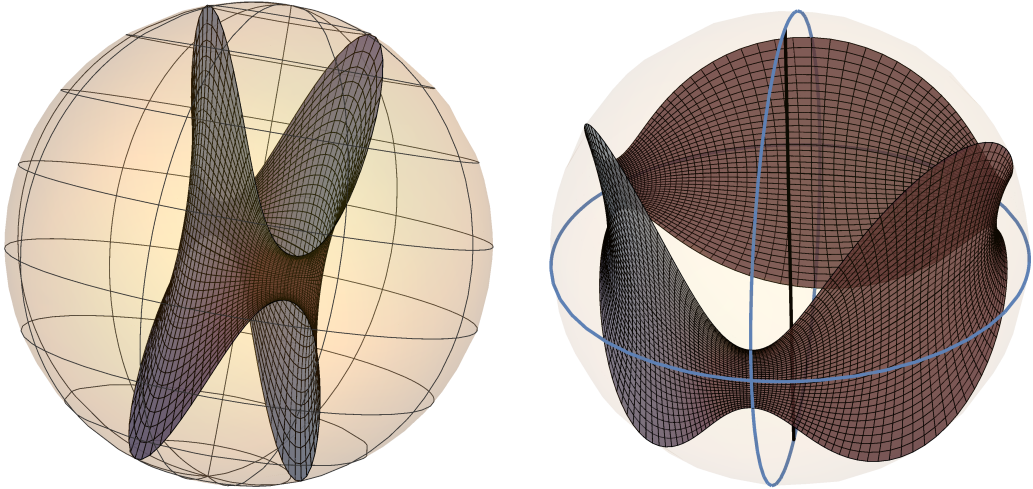

\begin{center}
\includegraphics[height=6.8cm]{EjemploEmbebido6.pdf}
\includegraphics[height=6.8cm]{EjemploEmbebido5b.pdf} 
\caption{Two examples of embedded, capillary minimal annuli in the unit ball $\B^3$. They do not have free boundary but intersect $\parc \B^3$ with a constant angle.}\label{fig:eje3}
\end{center}
\end{figure}

We next outline the paper. In Section \ref{sec:prelim} we review, following Wente \cite{W}, the geometry of minimal surfaces in $\R^3$ with spherical curvature lines, and the integration of their associated Hamiltonian system. 
In Section \ref{sec:orbits} we analyze the phase space of this system in order to control later on the free boundary condition. In Section \ref{sec:dege} we solve explicitly the Hamiltonian system in a degenerate case that appears as a limit of the examples that we will construct. 

In Section \ref{sec:wei} we provide Weierstrass data in terms of elliptic functions for a family of complete minimal surfaces in $\R^3$ foliated by spherical curvature lines, having the additional property that one of such curvature lines is a \emph{bounded} planar geodesic. This possibility was not considered in \cite{W}. The phase space analysis in Section \ref{sec:orbits} ensures that, for some of these minimal surfaces $\Sigma$, one of their curvature lines intersects some sphere $S(p,R)$ of $\R^3$ orthogonally.

In Section \ref{sec:period} we study the period map that indicates when are the spherical curvature lines of $\Sigma$ periodic. We view it as an $\R$-valued map from the $2$-dimensional space of rectangular lattices in $\C$ where the Weierstrass data of $\Sigma$ are defined, and show that its level sets are connected, regular, analytic curves. 

In Section \ref{sec:height} we show that there exists an analytic $1$-parameter family of the examples $\Sigma$ constructed in Section \ref{sec:wei} for which the sphere of orthogonal intersection $B(p,R)$ has its center in the plane where the bounded planar geodesic of $\Sigma$ lies; so, by symmetry, $\Sigma$ has free boundary in some sphere of $\R^3$. For that, we use our study of the degenerate Hamiltonian system of Section \ref{sec:dege}. Our analysis of the period map in Section \ref{sec:period} implies that there is a dense family of examples within that $1$-parameter family that, after homothety and translation, are compact free boundary minimal annuli in the unit ball $\B^3$. This proves Theorem \ref{main}. 

In Section \ref{sec:examples} we prove Theorem \ref{th:intro2} on the existence of embedded capillary minimal annuli in $\B^3$, using the results of Sections  \ref{sec:wei}, \ref{sec:period} and \ref{sec:height}. Finally, in Section \ref{sec:uniqueness} we prove the uniqueness result stated in Theorem \ref{th:uni}.

\section{Minimal surfaces foliated by spherical curvature lines}\label{sec:prelim}

We say that a minimal surface in $\R^3$ has \emph{spherical curvature lines} if one its two families of curvature lines has the property that each of its elements lies in some sphere $S$ of $\R^3$, and hence, intersects this sphere at a constant angle. Here, we allow that the sphere $S$ has infinite radius, i.e., that it is a plane, for some of these curvature lines. In this section we review some aspects of their geometry, following Wente \cite{W}.

Let $\Sigma$ be a minimal surface with spherical curvature lines. Then, around each point of $\Sigma$ there exists a local conformal parameter $z=u+iv$ on a domain $D\subset \C$ with the following properties:
\begin{enumerate}
\item
The second fundamental form of $\Sigma$ is $II=-du^2+dv^2$, and its Hopf differential is $\esiz \psi_{zz},N\esde = -1/2$. Here, $\psi:D\flecha \R^3$ is a conformal parametrization of $\Sigma$, and $N$ is the unit normal. 
\item
The $v$-curves $\psi(u_0,v)$ are spherical curvature lines of $\Sigma$.
\item
The metric of the surface is $ds^2 =e^{2\omega} |dz|^2$, where $\omega$ is a solution to the Liouville equation $\Delta \omega - e^{-2\omega} =0$. The principal curvature associated to the $v$-curves is $\kappa_2=e^{-2\omega}$.
\item
There exist functions $\alfa(u),\beta(u)$ such that 
\begin{equation}\label{om1}
2 \omega_u = \alfa(u) e^{\omega} + \beta(u) e^{-\omega}.
\end{equation}
\end{enumerate}
Equation \eqref{om1} characterizes the property that the $v$-curves of $\psi$ are spherical curvature lines. The local function $\omega$ can actually be extended analytically to a global solution to the Liouville equation, defined on $\C$ minus a discrete set of points $\mathcal{Z}$, at which $\omega\to \8$. This extended function, that will also be denoted by $\omega$, satisfies \eqref{om1} globally on $\C\setminus \mathcal{Z}$ for adequate functions $\alfa,\beta:\R\flecha \R$, and defines a conformal minimal immersion 
\begin{equation}\label{conf}
\psi(u,v):\C\setminus \mathcal{Z}\flecha \R^3
\end{equation}
which extends our original local immersion, and that has flat ends at the points of $\mathcal{Z}$. The values of $\alfa(u),\beta(u)$ are finite for every $u\in \R$, and they describe the radius $R(u)$ of the sphere $S(u)\subset \R^3$ where $\psi(u,v)$ lies, and the intersection angle $\theta(u)$ of $\Sigma$ with $S(u)$ along $\psi(u,v)$ by the equations 
\begin{equation}\label{radan}
R^2=\frac{4+\beta^2}{\alfa^2}, \hspace{1cm} \theta = {\rm arctan}(2/\beta).
\end{equation}
As for the center $c(u)$ of $S(u)$, it is given (when $\alfa(u)\neq 0$) by
 \begin{equation}\label{gencen}
c(u)=\psi(u,v)  - \frac{2}{\alfa(u)} \frac{\psi_u(u,v)}{|\psi_u(u,v)|} - \frac{\beta(u)}{\alfa(u)} N(u,v).
 \end{equation}
At the points in $\mathcal{Z}$, since $\omega\to \8$, one must have $R=\8$, because the curves $\psi(u_0,v)$ are trivially bounded if $R(u_0)\neq \8$.

An important property is that all the centers $c(u)$ of the spheres $S(u)\subset \R^3$ lie in a common line $L\subset\R^3$.

Another important equation satisfied by $\psi(u,v)$ is  \begin{equation}\label{pos}
4(e^{\omega})_v^2 = p(u,e^{\omega}),
\end{equation}
where, for each $u\in \R$, $p(u,X)$ is the (at most) fourth degree polynomial defined by 
  \begin{equation}\label{def:p}
  p(u,X):= -\alfa^2 X^4 - 4 \alfa' X^3 + 6 \gamma X^2 + 4 \beta' X - \left(4 + \beta^2\right), \hspace{1cm} 6\gamma:=6\alfa \beta -4\delta.
  \end{equation}

\subsection{A Hamiltonian system} The functions $(\alfa(u),\beta(u))$ above satisfy the autonomous system 
\begin{equation}\label{system}\left\{ \def\arraystretch{1.5}\begin{array}{lll} \alfa'' & = &  \delta \alfa - 2 \alfa^2 \beta, \\ \beta'' & = &  \delta \beta - 
2 \alfa \beta^2 - 2\alfa, \end{array} \right.
 \end{equation}
with respect to some constant $\delta\in \R$. The system \eqref{system} has a Hamiltonian nature, and in particular has some preserved quantities. More specifically, any solution $(\alfa(u),\beta(u))$ to \eqref{system} is defined on $\R$, and associated to it there exist constants $h,k\in \R$ such that 
\begin{equation}\label{def:h}
h= \alfa' \beta' +\alfa^2 - \delta\alfa \beta + \alfa^2 \beta^2
\end{equation}
and
\begin{equation}\label{def:k}
4k=(\alfa \beta'-\alfa'\beta)^2 + 4 \alfa'^2 + 4 \alfa^3 \beta - 4 \delta\alfa^2\end{equation}
hold for every $u\in\R$. The constants $h,k,\delta$ let us define, associated to any minimal surface foliated by spherical curvature lines, the polynomial 
\begin{equation}\label{spo}
q(x):=-x^3+\delta x^2+ h x +k
\end{equation}
that will play an important role. This polynomial is strongly related to the one introduced in \eqref{def:p}. For instance, a computation using  \eqref{def:h} and \eqref{def:k} gives the relation $${\rm discriminant}(q(x)) = 16\, {\rm discriminant}(p(u,X)),$$ for every $u\in \R$. In particular, the discriminant of $p(u,X)$ is actually independent from $u$, and described by the constants $h,k,\delta$.


The system \eqref{system} can be integrated by separated variables following a procedure by Jacobi. Consider the change of coordinates 
\begin{equation}\label{change}
\alfa \beta = s+t, \hspace{1cm} \alfa^2 = -st.
\end{equation}
It defines a diffeomorphism between the halfplane $\alfa>0$ (or $\alfa<0$) of the $(\alfa,\beta)$-plane and the quadrant $s>0, t<0$ of the $(s,t)$-plane. Then, \eqref{system} can be rewritten as
\begin{equation}\label{system2}\left\{ \def\arraystretch{1.5}\begin{array}{lll} s'(\landa)^2& = &  s^2 q(s), \hspace{1cm} (s>0), \\ t'(\landa)^2 & = &  t^2 q (t), \hspace{1.1cm} (t<0),\end{array} \right.
 \end{equation}
where $q$ is the polynomial \eqref{spo}, and the parameter $\landa$ is related to $u$ by 
\begin{equation}\label{repa}
2u'(\landa) = s(\landa)-t(\landa)>0.
\end{equation}
Note that if $(\alfa,\beta)$ satisfies \eqref{change} for some $(s,t)$, then $(-\alfa,-\beta)$ also does. 

Conversely, given a solution $(\alfa,\beta)$ to \eqref{system} with respect to some constant $\delta\in \R$, one can seek to integrate \eqref{om1} to find a solution $\omega$ to the Liouville equation, and then obtain from $\omega$ a minimal surface $\Sigma$ that satisfies \eqref{om1} with respect to $(\alfa,\beta)$, and in particular has spherical curvature lines. 
In order to do this, in the view of \eqref{pos}, it is also necessary to impose the additional condition that $p(u,X)>0$ for some $u\in \R$ and some $X>0$. 

It turns out that this positivity condition is also sufficient for the existence, as explained in the next lemma, that is contained in \cite[Theorem 2.3]{W}.
%
%
%
%
%
\begin{lemma}
If a solution $(\alfa(u),\beta(u))$ to \eqref{system} satisfies 
\begin{equation}\label{positi}
p(u_0,X)>0 \hspace{0.5cm} \text{for some pair $(u_0,X)\in \R\times (0,\8)$,} 
\end{equation}
then there exists a conformal minimal immersion $\psi(u,v)$ with spherical curvature lines that satisifies \eqref{om1} with respect to $\alfa(u),\beta(u)$.
\end{lemma}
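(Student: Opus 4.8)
The plan is to construct first the conformal factor as a solution $\omega$ of the Liouville equation that in addition satisfies the first order relation \eqref{om1}, and only afterwards to recover the immersion $\psi$ from the pair (metric, second fundamental form) by the fundamental theorem of surface theory. Writing $X=e^{\omega}$, the two relations \eqref{om1} and \eqref{pos} that we want $\omega$ to satisfy become the first order system
\begin{equation}\label{pl:sys}
2X_u=\alfa(u)\,X^2+\beta(u),\qquad 4X_v^2=p(u,X),
\end{equation}
with $p$ the polynomial \eqref{def:p}. Thus the real content of the statement is that, under \eqref{positi}, this overdetermined system admits a positive solution $X(u,v)$.

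For the existence of $X$ I would first integrate the second equation on the slice $u=u_0$. By \eqref{positi} the polynomial $p(u_0,\cdot)$ is positive on some interval $(X_1,X_2)\subset(0,\8)$, so the autonomous ODE $dX/dv=\tfrac12\sqrt{p(u_0,X)}$ produces a positive function $v\mapsto X(u_0,v)$. I would then propagate off this slice using the first (Riccati) equation: for each fixed $v$, solving $2X_u=\alfa(u)X^2+\beta(u)$ in $u$ with initial value $X(u_0,v)$ yields, by standard ODE theory, a solution $X(u,v)$ on a neighborhood of the slice.

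The crux is to verify that the second equation in \eqref{pl:sys} is preserved by this $u$-propagation, so that $4X_v^2=p(u,X)$ holds for all $u$ and not merely at $u_0$. I would introduce $F:=4X_v^2-p(u,X)$, which vanishes on $\{u=u_0\}$ by construction, and differentiate in $u$. Using $X_{uv}=\alfa X X_v$ (from the first equation), together with the evolution \eqref{system} of $(\alfa,\beta)$ and the relation $6\gamma=6\alfa\beta-4\delta$, one finds after collecting powers of $X$ that every term except the one proportional to $F$ cancels, giving an identity of the form $F_u=2\alfa X\,F$. Since this is a linear homogeneous ODE in $u$ with $F|_{u=u_0}=0$, uniqueness forces $F\equiv 0$, which is exactly \eqref{pos}. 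This polynomial cancellation, forced by the Hamiltonian system \eqref{system}, is the computational heart of the proof and the step I expect to require the most care to organize. As a by-product, once $F\equiv 0$ we get $p(u,X)=4X_v^2\ge 0$; since $p(u,0)=-(4+\beta^2)<0$, the solution $X(u,v)$ cannot vanish and hence stays positive on the neighborhood.

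It remains to check that $\omega=\log X$ solves Liouville and to build $\psi$. Differentiating the first equation of \eqref{pl:sys} in $u$ gives $\omega_{uu}$, while differentiating $4X_v^2=p(u,X)$ in $v$ gives $X_{vv}=\tfrac18 p_X(u,X)$ and hence $\omega_{vv}$; using \eqref{system} and \eqref{def:p} these combine into $\omega_{uu}+\omega_{vv}=e^{-2\omega}$, the Liouville equation. Finally, the data $ds^2=e^{2\omega}|dz|^2$ and $II=-du^2+dv^2$ have constant Hopf differential $\esiz \psi_{zz},N\esde=-1/2$, so the Codazzi equations hold automatically and the Gauss equation is precisely the Liouville equation just verified. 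By Bonnet's existence theorem there is a conformal minimal immersion $\psi(u,v)$, unique up to rigid motion, realizing this data; by construction it satisfies \eqref{om1} with respect to $(\alfa,\beta)$, and by \eqref{radan} its $v$-curves lie on spheres, i.e.\ are spherical curvature lines.
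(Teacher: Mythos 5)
Your proposal is correct. Note first that the paper itself gives no proof of this lemma: it is quoted directly from Wente's Theorem 2.3 in \cite{W}, so there is no in-paper argument to compare against. Your route --- construct $X=e^{\omega}$ on the slice $u=u_0$ from $4X_v^2=p(u_0,X)$ using \eqref{positi}, propagate in $u$ by the Riccati equation $2X_u=\alfa X^2+\beta$, show that $F:=4X_v^2-p(u,X)$ satisfies $F_u=2\alfa X F$ and hence vanishes identically, verify Liouville, and invoke Bonnet --- is exactly the compatibility argument underlying Wente's construction, and the key cancellation does work: writing $F_u=2\alfa XF+\bigl(2\alfa Xp-p_u-\tfrac12(\alfa X^2+\beta)p_X\bigr)$, the bracket vanishes coefficient by coefficient in $X$ once one substitutes $\alfa''=\delta\alfa-2\alfa^2\beta$, $\beta''=\delta\beta-2\alfa\beta^2-2\alfa$ and $6\gamma=6\alfa\beta-4\delta$; I checked all six powers $X^0,\dots,X^5$. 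The Liouville verification also closes: $X(X_{uu}+X_{vv})-(X_u^2+X_v^2)=1$ follows from the two first-order relations together with \eqref{def:p}. Two minor points of hygiene: (i) rather than integrating $dX/dv=\tfrac12\sqrt{p(u_0,X)}$ directly (the right-hand side is not Lipschitz at the roots of $p(u_0,\cdot)$), it is cleaner to solve the second-order equation $X_{vv}=\tfrac18 p_X(u_0,X)$ with $X(u_0,0)=X_0$, $X_v(u_0,0)=\tfrac12\sqrt{p(u_0,X_0)}$ and observe that $4X_v^2-p(u_0,X)$ is a first integral; (ii) your conclusion only yields a local immersion near the slice, since the Riccati flow may blow up in finite $u$-time, but the lemma as stated does not specify a domain, and the global analytic continuation to $\C\setminus\mathcal{Z}$ is handled separately in the paper's Section \ref{sec:prelim}. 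Neither point is a gap.
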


\begin{remark}\label{rem21}
Assume that $\alfa(u_0)=0$. By \eqref{radan}, the curvature line $\psi(u_0,v)$ intersects a plane at a constant angle, and so it can be unbounded. If $\psi(u_0,v)$ is unbounded, the surface has a flat end at some point in the $(u_0,v)$ line, and $e^{\omega}$ is unbounded along that line.  However, assume that additionally to $\alfa(u_0)=0$ we also have $\alfa'(u_0)>0$. In that situation, the polynomial $p(u_0,X)$ in \eqref{def:p} has degree three and negative leading coefficient. Thus, by \eqref{pos}, $e^{\omega}$ must be bounded, i.e., the planar curvature line $\psi(u_0,v)$ is bounded.
\end{remark}

\subsection{Weierstrass representation}\label{sec:were}
Minimal surfaces with spherical curvature lines can be described by elliptic Weierstrass data. Specifically, given $g_2,g_3\in \R$, let $\wp(z)=\wp(z;g_2,g_3)$ be the (possibly degenerate) Weierstrass $P$-function associated to $g_2,g_3$, so that it satisfies its standard differential equation $\wp'^2 = 4\wp^3 -g_2 \wp -g_3$. There are three cases:

\begin{enumerate}
\item
$g_2=g_3=0$. In that degenerate case, $\wp(z)=1/z^2$, which is holomorphic in $\C\setminus \Lambda$, with $\Lambda=\{0\}$.
 \item
The \emph{modular discriminant} $\Delta_{\rm mod}:=g_2^3-27 g_3^2$ is zero, with $g_2g_3\neq 0$. In that case, $\wp$ is a degenerate Weierstrass P-function that is singly periodic with respecto to either a real period or a purely imaginary period. See \cite{Ch}. The function $\wp(z)$ is holomorphic in $\C\setminus \Lambda$, where $\Lambda$ is the set of multiples of this fundamental period, and has double poles at the points of $\Lambda$.
\item
$\Delta_{\rm mod} \neq 0$. Then, $\wp(z)$ is doubly periodic with respect to a lattice $\Lambda \subset \C$. The map $\wp(z)$ is holomorphic in $\C\setminus \Lambda$, and has double poles at the points of $\Lambda$.
\end{enumerate}
More specifically, in the case $\Delta_{\rm mod} <0$, since $g_2,g_3\in \R$, the lattice $\Lambda$ has generators $\{2\omega_1,2\omega_2\}$ with $\omega_2= \overline{\omega_1}$, i.e., $\Lambda$ is a \emph{rhombic lattice}. Likewise, in the case $\Delta_{\rm mod} >0$, we have a \emph{rectangular lattice} $\Lambda$ with generators $\{2\omega_1,2\omega_2\}$, being $\omega_1>0$ and $\omega_2\in i \R$, with ${\rm Im}(\omega_2)>0$. See Figure \ref{fig:34}.

\begin{figure}[h]
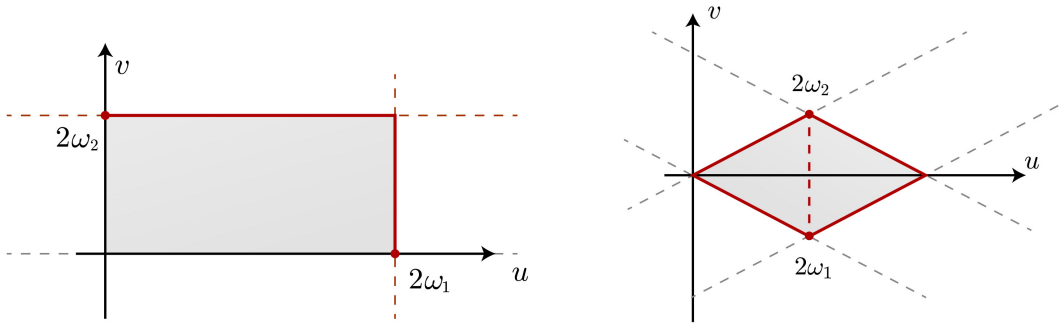

\begin{center}
\includegraphics[height=4.1cm]{Fig31.jpg} 
\includegraphics[height=4.5cm]{Fig42.jpg}
 \caption{Fundamental domains of the lattice $\Lambda$ for $\Delta_{\rm mod} >0$ and $\Delta_{\rm mod} <0$.}\label{fig:34}
\end{center}
\end{figure}

In what follows, let $\Lambda\subset \C$ be the set of poles of $\wp$, as detailed above. Let $b\in \R$ satisfy the cubic equation 
 \begin{equation}\label{cubicb}
 b^3 -4 g_2 b - 16 g_3=1.
 \end{equation}
Define the Weierstrass data $\Phi:=(\phi_1,\phi_2,\phi_3)$ given by 
\begin{equation}\label{weidata}
\Phi=\frac{\phi}{2}\left(\frac{1}{g}-g,i\left(\frac{1}{g}+g\right),2\right):\C\flecha \C\cup \{\8\}. \end{equation}
Here, $\phi(z):=b-4\wp(z)$, and $g$ is a meromorphic map on $\C$ such that $g/g' =\phi$. Note that $g$ is defined up to a multiplicative constant.

These Weierstrass data $\Phi$ define a complete, conformally immersed minimal surface $\psi:\C\setminus \Lambda\flecha \R^3$ given for $z=u+iv$ by
\begin{equation}\label{weipar0}
\psi(u,v)\equiv \psi(z) = 2 {\rm Re} \int_0^z \Phi(z) dz.
\end{equation}
Note that $\phi$ has poles at the points of $\Lambda$. At each $z_0\in \Lambda$, $\psi$ has an embedded flat end. By construction, the second fundamental form of $\psi$ is $II=-du^2+dv^2$,
where $z=u+iv$, and so the $v$-curves $\psi(u_0,v)$ are curvature lines of the surface associated to the positive principal curvature. It can be checked that, by our choice of $\Phi$, these are actually spherical curvature lines, and the centers $c(u)$ of the spheres $S(c(u),R(u))$ that contain the curves $\psi(u,v)$ all lie in a common vertical line $L$ of $\R^3$. If a $v$-curve contains a point $z_0=u_0+iv_0$ of the lattice $\Lambda$, then the curvature line $\psi(u_0,v)$ is unbounded (the surface $\psi$ has an end at $z_0$), and in particular it is contained in a plane, that is, $R(u_0)=\8$.

Conversely, it is proved by Wente, see Theorems 4.1, 4.2 and 4.3 in \cite{W}, that up to an isometry and a homothety of $\R^3$, and except for some degenerate cases, any minimal surface in $\R^3$ with spherical curvature lines can be constructed from the above Weierstrass data. Here, the degenerate cases correspond to the surfaces with planar curvature lines (including the catenoid), which appear when $\alfa(u)\equiv 0$.
%
%

\section{Phase space analysis}\label{sec:orbits}
In this section we analyze the systems \eqref{system}, \eqref{system2} analytically. To start, we fix the polynomial $q(x)$ in \eqref{spo}. While the next discussion can be carried out more generally, we will directly work in the conditions of our construction and make the assumption that $q(x)$ can be factorized as 
 \begin{equation}\label{spo2}
 q(x)=-(x-r_1)(x-r_2)(x-r_3),
 \end{equation}
where $r_1< r_2<0<r_3$.

Let $(\alfa(u),\beta(u))$ be a solution to \eqref{system} whose invariants $h,k,\delta$ are the coefficients of $q(x)$, written as in \eqref{spo}. 
Let $\mathcal{J}\subset \R$ be an open interval on which $\alfa$ does not vanish. As explained in the previous section, the restriction of $(\alfa(u),\beta(u))$ to $\mathcal{J}$ determines a solution $(s(\landa),t(\landa))$ to system \eqref{system2}. For this solution, by \eqref{spo2}, we must have $s(\landa)\in [0,r_3]$, and $t(\landa)\in (-\8,r_1]\cup [r_2,0]$, in order for \eqref{system2} to be defined. But as a matter of fact, if $t(\landa)$ is contained in $(-\8,r_1]$, it can be proved that $(\alfa(u),\beta(u))$ does not satisfy the positivity condition \eqref{positi}. So, it does not generate a minimal surface, and is unimportant to our study. Hence, we will assume that $t(\landa)$ takes values in $[r_2,0]$.

In this way, the trajectory $(s(\landa),t(\landa))$ is contained in the rectangle $\cR:= [0,r_3]\times [r_2,0]$ of the $(s,t)$-plane. If $s(\landa)$ or $t(\landa)$ is constant, this trajectory is contained in one of the edges of $\cR$. In any other case, the trajectory meets the interior of $\cR$. Thus, for any connected arc of this trajectory that lies in the interior of $\cR$, there exist $\ep_1,\ep_2 \in \{-1,1\}$ such that
\begin{equation}\label{system3}\left\{ \def\arraystretch{1.5}\begin{array}{lll} s'(\landa)& = &  \ep_1 s \sqrt{q(s)}, \hspace{1cm} (s>0), \\ t'(\landa) & = &  \ep_2 t\sqrt{q(t)}, \hspace{1.1cm} (t<0).\end{array} \right.
 \end{equation}
While this gives four different autonomous systems in normal form, the change $\landa \mapsto -\landa$ reverses both signs of $\ep_1,\ep_2$, and so the corresponding systems have the same orbits, with opposite orientation. So, we can regard the rectangle $\cR$ as two different phase spaces $\cR_+, \cR_-$, associated respectively to the systems 
\begin{equation}\label{system4} (a) \hspace{0.2cm} \left\{ \def\arraystretch{1.5}\begin{array}{lll} s'(\landa)& = &  \ep s \sqrt{q(s)},  \\ t'(\landa) & = &  \ep t\sqrt{q(t)}, \end{array} \right. \hspace{0.5cm} (b) \hspace{0.2cm} \left\{ \def\arraystretch{1.5}\begin{array}{llr} s'(\landa)& = &  -\ep s \sqrt{q(s)},  \\ t'(\landa) & = & \ep t\sqrt{q(t)},\end{array} \right.
 \end{equation}
where $\ep = \pm 1$, $s>0$ and $t<0$. Note that while each of these systems is formally decoupled, and can be integrated as two independent first order autonomous ODEs in normal form, the common parameter $\landa$ establishes a link between both solutions. We describe next the behavior of the orbits of $\cR_+$ and $\cR_-$.

Any trajectory $(s(\landa),t(\landa))$ of $\cR_+$ has negative slope at every point of ${\rm int}(\cR)$. Assume that $\ep =1$ for definiteness. Then, $s,t$ are defined (up to a translation in the $\landa$ parameter) for every $\landa \leq 0$, so that $s(\landa),t(\landa)\to 0$ with the order of $\exp(-|\landa|)$ as $\landa\to -\8$, and either $s(0)=r_3$ or $t(0)=r_2$ at the other end. So, the orbits $\Gamma$ of $\cR_+$ that touch the interior of $\cR$ start (in infinite $\landa$-time) at the equilibrium $(0,0)$, and end up (in finite $\landa$-time) at a point in the \emph{outer rim} $\cR\cap (\{s=r_3\}\cup \{t=r_2\})$ of the rectangle $\cR$. Moreover, $\Gamma$ is tangent to either $s=r_3$ or $t=r_2$ at this point in the outer rim, unless $\Gamma$ is the special orbit $\Gamma_1$ of $\cR_+$ that joins the origin to the \emph{opposite vertex} $(r_2,r_3)$ of $\cR$, that is also an equilibrium of \eqref{system4}.

One can discuss similarly the trajectories of $\cR_-$. This time, any such trajectory $(s(\landa),t(\landa))$ that meets the interior of $\cR$ has positive slope in ${\rm int}(\cR)$, and joins in finite $\landa$-time a point $(r_3,t)$, $t\in (r_2,0)$ with a point $(s,r_2)$, $s\in (0,r_3)$. The orbits in $\cR_-$ foliate the interior of $\cR$, and none of them passes through the vertex $(r_3,r_2)$.

\begin{figure}
\begin{center}
\includegraphics[height=5cm]{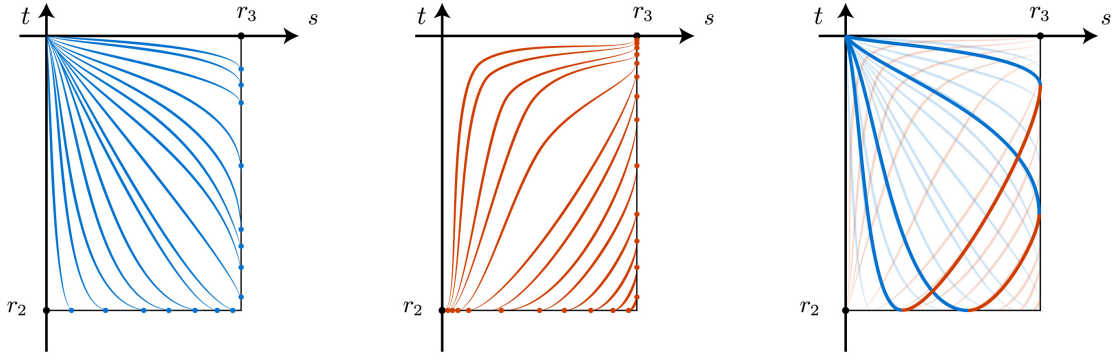} 
\caption{Left: Trajectories in the phase space $\cR_+$. Middle: Trajectories in the phase space $\cR_-$. Right: Trajectories of \eqref{system2}, obtained by analytic continuation.}\label{fig:fases}
\end{center}
\end{figure}

The orbits of $\cR_+$ and $\cR-$ can be joined in a tangential, analytic way at any point of the outer rim of $\cR$ (with the vertex $(r_3,r_2)$ excluded), to create a trajectory of system \eqref{system2}. In this way, we obtain trajectories $\Gamma(\landa)=(s(\landa),t(\landa))$ of \eqref{system2} that \emph{bounce at the walls} $s=r_3$ and $t=r_2$ of $\cR$ following the orbits of $\cR_+$, $\cR_-$. See Figure \ref{fig:fases}. These trajectories $\Gamma(\landa)$ are thus defined for every $\landa \in \R$, with $\Gamma(\landa)\to (0,0)$ as $\landa\to \pm \8$. 

The special orbit $\Gamma_1(\landa)$ is a degenerate case that bounces back in a symmetric way after hitting the vertex $(r_3,r_2)$; say, if $\Gamma_1(0)=(r_3,r_2)$ after a translation in the $\landa$-parameter, then $\Gamma_1(\landa)=\Gamma_1(-\landa)$ for every $\landa$. 

We summarize this discussion in the following Lemma.

\begin{lemma}\label{orbija}
Any solution $\Gamma(\landa):=(s(\landa),t(\landa))$ to \eqref{system2} is real analytic, and defined for all $\landa \in \R$, with $\Gamma(\landa)\to (0,0)$ with order $\exp(-|\landa|)$ as $\landa \to \pm \8$. 

Any such trajectory $\Gamma(\landa)$ is contained in the rectangle $\cR=[0,r_3]\times [r_2,0]$, and it is regular with non-zero, finite slope at every point $p_0=\Gamma(\landa_0)\in {\rm int}(\cR)$. Also, it intersects exactly once each of the boundary segments $\{r_3\}\times [r_2,0)$ and $(0,r_3]\times \{r_2\}$ of $\cR$, unless either $s(\landa)$ or $t(\landa)$ is constant.

These unique intersections with the boundary are tangential (i.e. either zero or infinite slope) unless $\Gamma$ is the unique solution $\Gamma_1(\landa)$ to \eqref{system2} that passes through $(r_3,r_2)$ at, say, $\landa=0$. In that situation, the curve $\Gamma_1(\landa)$ has a unique singular point at $\landa=0$, it has negative slope at any other point, and satisfies $\Gamma_1(\landa)=\Gamma_1(-\landa)$ for every $\landa \in \R$.
\end{lemma}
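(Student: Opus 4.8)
The plan is to decouple the analysis into the two first-order equations in \eqref{system2} and to remove their square-root singularities by passing to an equivalent analytic second-order formulation. Writing $F(X):=X^2 q(X)$, along any solution of \eqref{system2} the quantity $s'^2-F(s)$ is constant, since its $\landa$-derivative equals $2s'\bigl(s''-\tfrac12 F'(s)\bigr)$; likewise for $t$. Hence \eqref{system2} is equivalent to the second-order system
\[
s'' = \tfrac12 F'(s), \qquad t'' = \tfrac12 F'(t),
\]
restricted to the invariant level set given by the first integrals $s'^2=F(s)$, $t'^2=F(t)$. Because $F$ is a polynomial, these are analytic ODEs; since $s'^2=F(s)\ge 0$ confines $s$ to $[0,r_3]$ and $t$ to $[r_2,0]$, every trajectory stays in the compact rectangle $\cR$, so its solutions are real analytic and defined for all $\landa\in\R$. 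This settles the analyticity and global-existence claims at once and, crucially, lets me cross the turning points and the corner of $\cR$ — where the first-order field $X\sqrt{q(X)}$ is only Hölder — with no ad hoc gluing.

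Next I would record the sign pattern of $q$ forced by \eqref{spo2}: $q>0$ on $(0,r_3)$ and on $(r_2,0)$, with simple zeros at $r_3$ and $r_2$, and $q(0)=k=r_1 r_2 r_3>0$. Thus $F(s)=s^2 q(s)\ge0$ exactly on $[0,r_3]$ and $F(t)=t^2 q(t)\ge0$ exactly on $[r_2,0]$ (the branch $t\le r_1$ having been excluded in the discussion above), which reconfirms $\Gamma(\landa)\in\cR$. The nature of the endpoints is then read off from $F$: at $X=0$ the zero of $F$ is quadratic (from the $X^2$ factor), so the linearization $s'\approx\pm\sqrt{q(0)}\,s$ gives exponential approach in infinite $\landa$-time, yielding $\Gamma(\landa)\to(0,0)$ of the asserted order $\exp(-|\landa|)$ (with rate $\sqrt{q(0)}$); at $X=r_3$, resp.\ $X=r_2$, the zero is simple, hence a turning point reached in finite time. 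It follows that a non-constant $s$ increases monotonically from $0$ to $r_3$ and back to $0$, attaining $r_3$ exactly once, and symmetrically $t$ attains $r_2$ exactly once.

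From here the interior and boundary assertions are immediate. On ${\rm int}(\cR)$ one has $q(s),q(t)>0$, hence $s',t'\neq0$, so $\Gamma$ is regular there and its slope $t'/s'$ is finite and nonzero. The single maximum of $s$ and single minimum of $t$ give exactly one intersection of $\Gamma$ with each outer-rim segment $\{r_3\}\times[r_2,0)$ and $(0,r_3]\times\{r_2\}$ when $s,t$ are non-constant; at a contact with $s=r_3$ one has $s'=0\neq t'$ (infinite slope), and at a contact with $t=r_2$ one has $t'=0\neq s'$ (zero slope), so both are tangential.

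The delicate, exceptional case — and the step I expect to be the real obstacle — is when the turning times of $s$ and $t$ coincide, so that $\Gamma$ passes through the corner $(r_3,r_2)$, where both factors of the field vanish. Normalizing this to $\landa=0$, the second-order ODEs give $s(\landa)=r_3+\tfrac14 F'(r_3)\landa^2+O(\landa^4)$ and $t(\landa)=r_2+\tfrac14 F'(r_2)\landa^2+O(\landa^4)$, with $F'(r_3)=r_3^2 q'(r_3)<0$ and $F'(r_2)=r_2^2 q'(r_2)>0$; hence the slope $dt/ds=t'/s'\to F'(r_2)/F'(r_3)$ is finite, nonzero and in fact negative, so the contact at the corner is \emph{not} tangential — this is what singles out $\Gamma_1$. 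Uniqueness of $\Gamma_1$ follows because the corner prescribes the full second-order Cauchy data $(s,t)=(r_3,r_2)$, $(s',t')=(0,0)$, determining the orbit up to a $\landa$-translation; and since $\landa\mapsto-\landa$ preserves \eqref{system2} and fixes this data, uniqueness of the analytic Cauchy problem forces $\Gamma_1(\landa)=\Gamma_1(-\landa)$, with $\landa=0$ its only point of vanishing velocity. The main difficulty throughout is precisely this passage through the square-root degeneracies of the first-order field; verifying that the second-order reformulation faithfully reproduces \eqref{system2} — i.e.\ that the first integral $s'^2=F(s)$ is propagated and that no spurious solutions off that level set are introduced — is the point that requires the most care.
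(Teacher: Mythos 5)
Your proof is correct, and at the one genuinely delicate point it takes a different (and arguably cleaner) route than the paper. The paper proves the lemma by the qualitative discussion preceding it: it splits \eqref{system2} into the sign-definite first-order systems \eqref{system4}, describes the orbits of the two phase spaces $\cR_+$ and $\cR_-$ separately, and then \emph{asserts} that these orbits ``can be joined in a tangential, analytic way'' at the outer rim and at the corner $(r_3,r_2)$ --- the analyticity of the glued trajectory being inherited, implicitly, from the fact that $(s,t)$ comes from an analytic solution of \eqref{system} via \eqref{change} and \eqref{repa}. You instead lift \eqref{system2} to the analytic second-order system $s''=\tfrac12F'(s)$, $t''=\tfrac12F'(t)$ with $F(X)=X^2q(X)$, restricted to the invariant level set $s'^2=F(s)$, $t'^2=F(t)$; this makes global existence, real analyticity, the finite-time bounce at the simple zeros $r_3,r_2$, the infinite-time exponential approach to the quadratic zero at $0$, and the behaviour at the corner (Cauchy data $(r_3,r_2,0,0)$, hence uniqueness of $\Gamma_1$ and the symmetry $\Gamma_1(\landa)=\Gamma_1(-\landa)$ from the $\landa\mapsto-\landa$ invariance) all consequences of standard ODE uniqueness, with no ad hoc gluing. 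The remaining sign and slope computations ($q>0$ on $(r_2,0)\cup(0,r_3)$, $q(0)=r_1r_2r_3>0$, $F'(r_3)<0<F'(r_2)$, limiting slope $F'(r_2)/F'(r_3)<0$ at the corner) are all right and match the paper's conclusions. The one sentence I would add to fully close your argument is the point you yourself flag at the end: the solutions of \eqref{system2} the lemma is about are those induced by the globally defined, real-analytic solutions of \eqref{system}, and for these $s'' = \tfrac12 F'(s)$ holds identically by analytic continuation from the open set where $s'\neq 0$; this rules out the non-analytic $C^1$ ``lingering'' solutions of the bare first-order relation $s'^2=F(s)$ at the walls, which neither your argument nor the paper's intends to include.
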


We next go back to system \eqref{system}, and study the parameter $u$ of the solution $(\alfa(u),\beta(u))$ to \eqref{system} associated to the trajectory $\Gamma(\landa)$ to \eqref{system2} that we just discussed. The parameters $u,\landa$ are related by \eqref{repa}. Since $s(\landa) t(\landa)\neq 0$ for every $\landa\in \R$, it follows by \eqref{change} that we can choose our initial interval $\mathcal{J}\subset \R$ as $\mathcal{J}= \{u(\landa): \landa \in \R\}$. Now, since $\Gamma(t)\to (0,0)$ with order $\exp(-|\landa| )$ as $\landa\to \pm \8$ (Lemma \ref{orbija}), this interval is bounded. So, up to a translation in the $u$-parameter, we can set $\mathcal{J}=(0,L)$ for some $L>0$, with $u=0$ (resp. $u=L$) corresponding to $\landa =-\8$ (resp. $\landa=\8$).

Note that, by \eqref{change}, we have $\alfa(0)=\alfa(L)=0$. So, it follows from \eqref{def:h} and \eqref{def:k} that, at $u=0$, the solution $(\alfa(u),\beta(u))$ to \eqref{system} is uniquely determined by the value $\beta_0:= \beta(0)$ and the sign of $\alfa'(0)$. Once we fix the sign of $\alfa'(0)$, any choice of $\beta_0$ determines a trajectory $\Gamma(t)$ to \eqref{system2}. Conversely, after fixing a sign for $\alfa'(0)$, any trajectory of \eqref{system2} corresponds to some  unique choice of $\alfa(0)=0$, $\beta(0)=\beta_0\in \R$ in system \eqref{system}. To this respect, it should be noted that the \emph{dual} solutions $(\alfa,\beta)$ and $(-\alfa,-\beta)$ to \eqref{system} project to the same orbit $(s(\landa),t(\landa))$ of \eqref{system2}, by \eqref{change}.

We are specially interested in one particular orbit of \eqref{system2}. Note that by the previous discussion, we do not need to fix the sign of $\alfa'(0)$ in the definition below.

\begin{definition}\label{def:gamma0}
In the conditions above, we denote by $\Gamma_0$ the orbit of \eqref{system2} that corresponds to the choice $\alfa(0)=0$, $\beta(0)=0$.
\end{definition}

\begin{remark}\label{iniconi}
In the case that the solution $(\alfa(u),\beta(u))$ to \eqref{system} defines a minimal surface $\Sigma$, the condition $\alfa(0)=\beta(0)=0$ is equivalent, by \eqref{radan}, to the property that $\Sigma$ intersects orthogonally a plane along the curvature line $\psi(0,v)$.
\end{remark}

Consider next the following assumptions on the polynomial $q(x)$ in \eqref{spo} (written as \eqref{spo2}):

\begin{enumerate}
\item
$r_1< r_2<0<r_3$.
\item
$r_1 r_2 r_3 =1$.
\item
$q'(0)>0$, that is,  $r_1 r_2 + r_1 r_3 + r_2 r_3 <0$.
\end{enumerate}
The first condition had already been imposed on $q(x)$. The second assumption is just a normalization that can be attained after a homothety and a conformal reparametrization of the surface. The third one will be fundamental to control the free boundary condition for minimal surfaces (see Corollary \ref{cor:orbits} below). By a basic algebraic manipulation we have:

\begin{lemma}\label{conde}
For $(r_1,r_2,r_3)\in \R^3$, the following two claims are equivalent:
\begin{enumerate}
\item[i)]
$(r_1,r_2,r_3)$ satisfy conditions (1)-(3) above.
\item[ii)]
$r_3=1/(r_1 r_2)$, where $(r_1,r_2)$ satisfy
\begin{equation}\label{ine}
-\sqrt[3]{2}< r_2< 0, \hspace{1cm}  \frac{-1-\sqrt{1-4 r_2^3}}{2 r_2^2}< r_1 < r_2.
\end{equation}
\end{enumerate}
\end{lemma}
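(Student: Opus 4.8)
The plan is to treat the statement as elementary algebra, translating the root conditions (1)--(3) on $q(x)$ directly into the explicit inequalities \eqref{ine}. First I would record the relation between the roots and the coefficients: comparing \eqref{spo2} with \eqref{spo} gives $\delta=r_1+r_2+r_3$, $h=-(r_1r_2+r_1r_3+r_2r_3)$ and $k=r_1r_2r_3$, so that $q'(0)=h$ and condition (3) reads exactly $r_1r_2+r_1r_3+r_2r_3<0$. Under (1) both $r_1,r_2$ are negative, so $r_1r_2>0$ and the normalization (2) is equivalent to $r_3=1/(r_1r_2)$, which is then automatically positive; thus (1)--(2) already encode the first half of (ii).

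Next I would substitute $r_3=1/(r_1r_2)$ into condition (3) and clear the positive factor $r_1r_2$. This reduces (3) to the single polynomial inequality $(r_1r_2)^2+r_1+r_2<0$. Reading this as a quadratic in $r_1$, namely $r_2^2\,r_1^2+r_1+r_2<0$, its discriminant $1-4r_2^3$ is strictly positive because $r_2<0$; hence there are two real roots $\rho_\pm=(-1\pm\sqrt{1-4r_2^3})/(2r_2^2)$ with $\rho_-<0<\rho_+$, and the inequality holds precisely on $(\rho_-,\rho_+)$. Since $r_1<r_2<0<\rho_+$, the upper endpoint is inactive, and the admissible set for $r_1$ becomes $(\rho_-,r_2)$, which is exactly the interval appearing in \eqref{ine}.

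It then remains to determine for which $r_2$ this interval is non-empty, i.e.\ when $\rho_-<r_2$, and this is the only delicate point. Clearing the positive denominator, $\rho_-<r_2$ becomes $\sqrt{1-4r_2^3}>-1-2r_2^3$; when the right-hand side is negative the inequality is automatic, and otherwise squaring and factoring yields $-4r_2^3(2+r_2^3)>0$, which (using $r_2<0$) is equivalent to $r_2^3>-2$, i.e.\ $r_2>-\sqrt[3]{2}$. Together with $r_2<0$ this is precisely the bound $-\sqrt[3]{2}<r_2<0$ in \eqref{ine}.

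Finally I would observe that every manipulation above --- clearing the positive factors $r_1r_2$ and $2r_2^2$, locating $r_1$ strictly between the roots $\rho_\pm$, and the threshold computation for $r_2$ --- is reversible, so the two directions of the equivalence follow together: starting from (ii) one recovers $r_1<r_2<0<r_3$, $r_1r_2r_3=1$, and $(r_1r_2)^2+r_1+r_2<0$, hence (3). The main obstacle, such as it is, is the sign bookkeeping in the squaring step that produces the cube-root threshold $-\sqrt[3]{2}$; everything else is a direct substitution.
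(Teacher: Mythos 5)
Your proof is correct, and it fills in exactly the ``basic algebraic manipulation'' that the paper invokes without writing out: substituting $r_3=1/(r_1r_2)$ into $q'(0)>0$, reading the result as a quadratic in $r_1$, and extracting the root interval and the $-\sqrt[3]{2}$ threshold. All sign checks (positivity of $r_1r_2$ and $2r_2^2$, the location of $\rho_\pm$, and the squaring step) are handled correctly, so nothing further is needed.
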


Along the rest of the paper, we will consider the open sets $\Omega_0\subset \Omega\subset W\subset \R^2$ given  by the following relations for $(r_1,r_2)$ (see Figure \ref{fig:fig1}): 
 \begin{equation}\label{omw}
 W\equiv \{r_1<r_2<0\},\hspace{0.5cm} \Omega\equiv \{ \eqref{ine} \text{holds}\}, \hspace{0.5cm} \Omega_0\equiv\Omega\cap \{r_1 r_2^2<-1\}.
 \end{equation}
We remark that the condition $r_1r_2^2 <-1$ for $\Omega_0$ is equivalent to $r_2+r_3<0$.

\begin{figure}
\begin{center}
\includegraphics[height=6.5cm]{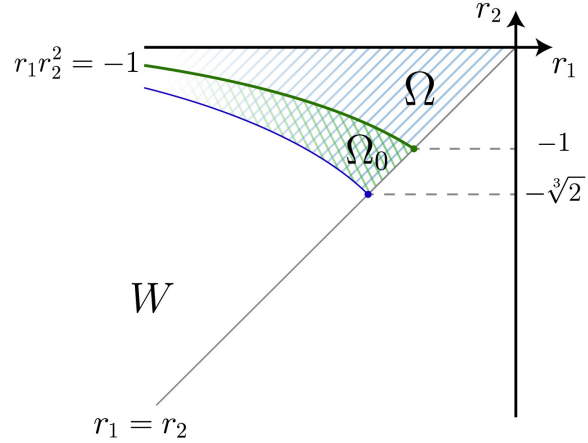}
\caption{The domains $W$, $\Omega$ and $\Omega_0$.}\label{fig:fig1}
\end{center}
\end{figure}

\begin{proposition}\label{pro:orbits}
Let $q(x)$ be the polynomial \eqref{spo2}, where $(r_1,r_2)\in \Omega$ and $r_3=1/(r_1 r_2)$. Then:
\begin{enumerate}
\item
The orbit $\Gamma_0$ intersects the line $s+t=0$ in \emph{at most} one point before hitting the horizontal segment $\cR\cap \{t=r_2\}$.
\item
If $(r_1,r_2)\in \Omega_0$, this intersection point always exists.
\end{enumerate}
\end{proposition}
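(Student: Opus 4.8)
The plan is to track the scalar quantity $f(\landa):=s(\landa)+t(\landa)$ along the orbit $\Gamma_0$, because by \eqref{change} one has $f=\alfa\beta$, so the intersections of $\Gamma_0$ with the line $s+t=0$ are exactly the zeros of $f$. I parametrize $\Gamma_0$ so that, as $\landa$ grows from $-\8$, it leaves the equilibrium $(0,0)$ into ${\rm int}(\cR)$ along the phase space $\cR_+$ (hence $s$ increasing, $t$ decreasing), possibly bounces once off the wall $s=r_3$ to continue in $\cR_-$, and finally reaches the segment $\cR\cap\{t=r_2\}$. By Lemma \ref{orbija} at most one such bounce can occur before that hit, so I only have to analyze the two phases $\cR_+$ and $\cR_-$.

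First I would fix the sign of $f$ near the start. Since $\Gamma_0$ corresponds to $\alfa(0)=\beta(0)=0$ (Definition \ref{def:gamma0}), evaluating \eqref{def:h} at $u=0$ gives $h=\alfa'(0)\beta'(0)$; moreover $h=q'(0)>0$ by assumption (3) on $q$, so neither $\alfa$ nor $\beta$ has a vanishing derivative at $0$ and both have simple zeros there. Hence $f=\alfa\beta=h\,u^2+o(u^2)>0$ for $u$ near $0^+$, i.e. $f>0$ as $\landa\to-\8$, so $\Gamma_0$ leaves the origin on the side $\{s+t>0\}$.

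Next I compute the sign of $f'=s'+t'$ at a putative crossing $t=-s=-s_0$, with $s_0\in(0,r_3)$. Along $\cR_+$ one has $s'=s\sqrt{q(s)}$, $t'=t\sqrt{q(t)}$, so
\[
f'=s_0\big(\sqrt{q(s_0)}-\sqrt{q(-s_0)}\big),\qquad q(s_0)-q(-s_0)=2s_0\,(h-s_0^2),
\]
whence $f'$ has the sign of $h-s_0^2$. Along $\cR_-$ (after the bounce) the sign of $s'$ reverses, $s'=-s\sqrt{q(s)}$ while $t'<0$ is unchanged, and the same computation gives $f'=-s_0(\sqrt{q(s_0)}+\sqrt{q(-s_0)})<0$ unconditionally. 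Combined with the strict monotonicity of $s$ on each phase (increasing on $\cR_+$, decreasing on $\cR_-$), this yields claim (1): on $\cR_+$, $f$ cannot vanish while $s_0<\sqrt{h}$ (a downward crossing there would need $f'\le0$, contradicting $f'>0$), and once $s_0>\sqrt{h}$ every crossing is strictly downward, so at most one happens and $f$ stays negative afterwards; if instead $f$ is still positive at the bounce, then on $\cR_-$ every crossing is downward, so again at most one happens. In all cases $f$ vanishes at most once before reaching $\{t=r_2\}$. For claim (2), when $(r_1,r_2)\in\Omega_0$ we have $r_2+r_3<0$, i.e. $r_3<-r_2$; since $\Gamma_0\subset\cR=[0,r_3]\times[r_2,0]$, its $s$-coordinate never exceeds $r_3$, so at the point $(s_*,r_2)$ where it meets $\{t=r_2\}$ (which exists by Lemma \ref{orbija}) we get $s_*\le r_3<-r_2$, hence $f=s_*+r_2<0$ there. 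As $f>0$ at the start, the intermediate value theorem forces a zero in between, and with (1) it is unique.

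The step needing the most care is the at-most-one count in (1): one must correctly account for the wall bounce, so that the interval "before hitting $\{t=r_2\}$" may span both $\cR_+$ and $\cR_-$, and one must deal with the borderline crossing $s_0=\sqrt{h}$, where $f'=0$ and transversality fails. The strict monotonicity of $s$ on each phase is exactly what rules out a return crossing and closes the argument; claim (2) is then almost immediate from the containment $\Gamma_0\subset\cR$ together with $r_2+r_3<0$.
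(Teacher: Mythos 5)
Your proof is correct and follows essentially the same route as the paper's: the same starting sign via $h=\alfa'(0)\beta'(0)=q'(0)>0$, the same dichotomy at $s_0=\sqrt{h}$ coming from $q(-x)-q(x)=2x(x^2-h)$, the same monotonicity argument after the wall bounce, and the same intermediate-value argument for item (2) using $r_2+r_3<0$. The only (cosmetic) difference is that you test the sign of $(s+t)'$ at zeros of $s+t$, whereas the paper compares the orbit's slope $m(s_0)=-\sqrt{q(-s_0)/q(s_0)}$ with the slope $-1$ of the diagonal; since $s'>0$ in $\cR_+$, these are the same criterion.
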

\begin{proof}
Since $\alfa(0)=0$, we get from \eqref{def:h} and $(r_1,r_2)\in \Omega$ that $$\alfa'(0) \beta'(0)=h=q'(0)>0.$$ So, by $\alfa(0)=\beta(0)=0$, both $\alfa(u),\beta(u)$ have the same sign for $u>0$ small enough. By \eqref{change}, the orbit $\Gamma_0=\Gamma_0(\landa)$ is above $s+t=0$ near the origin, for $\landa \approx -\8$. By our previous analysis, $\Gamma_0(\landa)\to (0,0)$ as $\landa \to \8$, and in the process it intersects the segment $\cR\cap \{t=r_2\}$ exactly once. 

Assume that $(r_1,r_2)\in \Omega_0$. Then, we have $r_2+r_3< 0$ and so the segment $\cR\cap \{t=r_2\}$ lies in the halfplane $s+t< 0$. Thus, $\Gamma_0$ intersects $s+t=0$ before reaching $\cR\cap \{t=r_2\}$. So, we only have left to prove item (1).

For this, we look at the slope $m(s_0)$ of the orbit of \eqref{system4}-(a) that passes through a point $(s_0,-s_0)$ in the diagonal $s+t=0$. By \eqref{system4}-(a), it is given by $$m(s_0)=-\sqrt{\frac{q(-s_0)}{q(s_0)}}.$$ We have then that $m(0)=-1$.

On the other hand, a simple computation from \eqref{spo} shows that the function $q(-x)-q(x)$ is negative in $[0,\sqrt{h})$ and positive in $(\sqrt{h},\8)$, where $h=q'(0)>0$. Thus, for small positive values of $s$, we see that $m(s)>-1$, and we have two possibilities:

{\bf Case 1:} ${\rm min}\{r_3,-r_2\} \leq \sqrt{h}$. In that situation, $m(s)> -1$ for every $s\in (0,{\rm min}\{r_3,-r_2\})$.

{\bf Case 2:} ${\rm min}\{r_3,-r_2\} > \sqrt{h}$. This time, $m(s)>-1$ for every $s\in (0,\sqrt{h})$, while $m(s)<-1$ for every $s \in (\sqrt{h},{\rm min}\{r_3,-r_2\})$.

Recall that the orbit $\Gamma_0$ starts above the diagonal $s+t=0$. So, as long as $\Gamma_0$ stays in the phase space \eqref{system4}-(a), it can only cross $s+t=0$ at a point $(s_0,-s_0)$ at which $m(s_0) \leq -1$. Thus, by the previous dichotomy, $\Gamma_0$ can only cross once $s+t=0$ while in the phase space \eqref{system4}-(a). From here and the monotonicity of the orbits of \eqref{system4}-(b), we see that item (1) holds. This completes the proof.
\end{proof}

\begin{remark}\label{rema:orbits}
If in Proposition \ref{pro:orbits} we assume additionally that $r_2+r_3>0$, i.e., that $(r_1,r_2)\in \Omega\setminus \overline\Omega_0$, then $\Gamma_0$ only intersects $s+t=0$ at most once. Indeed, from $r_2+r_3>0$ and the arguments in the proof of Proposition \ref{pro:orbits} we see that the slope $m(s)$ is $>-1$ for every $s\in (0,-r_2]$, due to the fact that $m(s)\to 0$ as $s\to -r_2$. In particular, $\Gamma_0$ can only intersect $s+t=0$ at most once after reaching the segment  $\cR\cap \{t=r_2\}$. But on the other hand, if $\Gamma_0$ had already intersected $s+t=0$ before hitting this horizontal segment, the condition $m(s)>-1$ prevents it from intersecting $s+t=0$ again. 
\end{remark}

\begin{corollary}\label{cor:orbits}
Let $(\alfa(u),\beta(u))$ be the (unique up to sign) solution to system \eqref{system} with initial conditions $\alfa(0)=\beta(0)=0$. Assume that its associated polynomial $q(x)$ in \eqref{spo}, written as in \eqref{spo2}, satisfies $(r_1,r_2)\in \Omega_0$ with $r_3=1/(r_1 r_2)$. Then, there exists a unique $\tau>0$ such that $\beta(\tau)=0$ and both $\alfa(u)\neq 0$ and $\beta(u)\neq 0$ hold for every $u\in (0,\tau]$.
\end{corollary}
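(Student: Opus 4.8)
The plan is to reduce the statement to Proposition \ref{pro:orbits} by translating it from the phase-space picture, expressed in the $(s,t)$-variables and the $\landa$-parameter, back to the original $(\alfa,\beta)$-solution and its parameter $u$. By Definition \ref{def:gamma0}, the solution with $\alfa(0)=\beta(0)=0$ is exactly the one whose orbit in the $(s,t)$-plane is $\Gamma_0$. I would first record, from the discussion after Lemma \ref{orbija}, that $u$ depends strictly increasingly on $\landa$ because $2u'(\landa)=s-t>0$, and that $u=0$ corresponds to $\landa=-\8$, i.e.\ to the base point $(0,0)$ of $\Gamma_0$; in particular $u$ maps $\R$ diffeomorphically onto a bounded interval $(0,L)$.

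The crux is the dictionary between the zeros of $\beta$ and the crossings of the diagonal $s+t=0$. While $\Gamma_0$ stays in ${\rm int}(\cR)$ one has $s>0>t$, so $\alfa^2=-st>0$ forces $\alfa\neq0$, and then $\beta=(s+t)/\alfa$. Hence on the interior arc $\beta$ vanishes precisely when $s+t=0$, while $\alfa$ never vanishes there. This is the key identification I would use: a zero of $\beta$ accompanied by $\alfa\neq0$ is the same thing as a crossing of $\Gamma_0$ with $\{s+t=0\}$ inside the rectangle.

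With this in hand I would apply Proposition \ref{pro:orbits}. Because $(r_1,r_2)\in\Omega_0$, part (2) supplies a crossing of $\Gamma_0$ with $s+t=0$ before the orbit reaches the segment $\cR\cap\{t=r_2\}$, and part (1) shows it is the only such crossing. Let $\landa_*$ be its $\landa$-coordinate and set $\tau:=u(\landa_*)$; since $\landa_*$ is finite and the orbit is still in ${\rm int}(\cR)$ there, $\tau$ is finite and $>0$, with $\beta(\tau)=0$ and $\alfa(\tau)\neq0$. For $u\in(0,\tau]$ the orbit has not reached the outer rim, so it lies in ${\rm int}(\cR)$ and thus $\alfa(u)\neq0$; for $u\in(0,\tau)$ the orbit stays strictly above the diagonal (the proof of Proposition \ref{pro:orbits} shows $\Gamma_0$ leaves the origin above $s+t=0$, and $\landa_*$ is the first crossing), so $s+t>0$ and therefore $\beta(u)\neq0$. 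Uniqueness of $\tau$ is then immediate from the uniqueness of the crossing in Proposition \ref{pro:orbits}(1) together with the strict monotonicity of $u(\landa)$.

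The only genuinely delicate point, I expect, will be bookkeeping at the two endpoints of the interior arc: checking that the crossing produced by Proposition \ref{pro:orbits} really occurs while $\alfa\neq0$, so that the equivalence $\beta=0\iff s+t=0$ is legitimate, and that the base point $u=0$ — where $(s,t)\to(0,0)$ lies on the diagonal in the limit $\landa\to-\8$ — is not miscounted as a zero of $\beta$ in $(0,\tau)$. Both are resolved by working in the open set ${\rm int}(\cR)$ and at finite $\landa$, where the change of variables \eqref{change} is a genuine diffeomorphism.
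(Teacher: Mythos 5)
Your reduction to Proposition \ref{pro:orbits} via the dictionary $\alfa^2=-st$, $\alfa\beta=s+t$ is exactly the argument the paper intends; the corollary carries no separate proof precisely because it is this translation, and your identification of the zeros of $\beta$ with the diagonal crossings of $\Gamma_0$, together with the monotone reparametrization $u(\landa):\R\to(0,L)$, is the right bookkeeping. One inaccuracy: it is not true in general that $\Gamma_0$ stays in ${\rm int}(\cR)$ for all $u\in(0,\tau]$ --- Proposition \ref{pro:orbits} only locates the crossing before the horizontal segment $\cR\cap\{t=r_2\}$, not before the whole outer rim, and indeed for $(r_1,r_2)\in\Omega_0$ near the diagonal the orbit first bounces off the wall $\{s=r_3\}$ and only then crosses $s+t=0$ (this is exactly Case 1 of Theorem \ref{th:dege} in the degenerate limit). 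This does not damage your proof: the inequalities $s(\landa)>0>t(\landa)$ hold for every finite $\landa$, including on the walls, so $\alfa\neq 0$ on all of $(0,L)$ and the equivalence $\beta=0\iff s+t=0$ remains valid through and past the bounce; just replace ``has not reached the outer rim'' by this observation.
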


\section{Solution of the system in the degenerate case}\label{sec:dege}

We now solve system \eqref{system2} in that case that $q(x)$ is given by \eqref{spo2} for the (degenerate) case that $r_1=r_2=r<0$. Our aim in doing so is to have an exact control of the orbit $\Gamma_0$ in this situation. 

The fact that $q(x)=-(x-r)^2(x-1/r^2)$ has a double root allows to solve \eqref{system2} in a more explicit way. Specifically, let $H$ be the $C^{\8}$ function in $(-\8,0)\cup (0,1/r^2)$ given by 
\begin{equation}\label{defache}
H(x):= \frac{1+ \sqrt{1- x r^2}}{1- \sqrt{1-x r^2}} \, \left( \frac{\sqrt{1-r^3}-\sqrt{1- x r^2}}{\sqrt{1-r^3}+\sqrt{1-x r^2}}\right)^{\frac{1}{\sqrt{1-r^3}}}.
\end{equation}
The function $-{\rm ln} (H(x))$ is a primitive of $1/(x \sqrt{q(x)})$. Then, the general solution $(s(\landa),t(\landa))$ to system $(a)$ in \eqref{system4} (assume $\ep =1$ for definiteness) is implicitly given by
\begin{equation}\label{solude}
H(s(\landa)) =  c_1 e^{-\landa}, \hspace{1cm} H(t(\landa)) = c_2 e^{-\landa},
\end{equation}
where $c_1,c_2\in \R$. Likewise, the general solution to \eqref{system4}-$(b)$ with $\ep=1$ is
\begin{equation}\label{solude2}
H(s(\landa)) =  c_1 e^{\landa}, \hspace{1cm} H(t(\landa)) = c_2 e^{-\landa}.
\end{equation}

In our present degenerate case, we can still carry out a qualitative analysis of the system following our study of Section \ref{sec:orbits}. There is, however, an important difference in that, due to the existence of the double root of $q(x)$ at $x=r<0$, the function $t(\landa)$ can only approach the value $t(\landa)=r$ as $\landa\to \8$.  This means that the trajectories $(s(\landa),t(\landa))$ of system \eqref{system2} start (for $\landa=-\8)$ at $(0,0)$, they reach in finite time the \emph{wall} $\{(1/r^2,t): t\in (r,0)\}$, and end (for $\landa=\8$) at the point $(0,r)$. See Figure \ref{fig:fasesdeg}. In particular, any orbit of \eqref{system2} that starts at the origin above the diagonal $s+t=0$ must eventually intersect again this diagonal, as it must end at $(0,r)$.

\begin{figure}[h]
\begin{center}
\includegraphics[height=5cm]{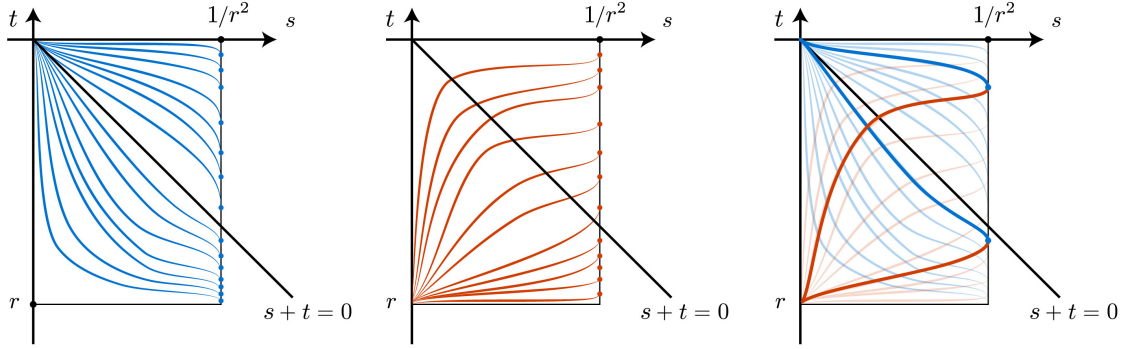} 
 \caption{Trajectories of \eqref{system2} in the $(s,t)$-plane when $r_1=r_2$. If they start initially above the diagonal $s+t=0$ they eventually cross it, maybe after \emph{bouncing} at the right wall.}\label{fig:fasesdeg}
\end{center}
\end{figure}

From now on, we assume the conditions 
\begin{equation}\label{degecon}
r_1=r_2=:r<0, \hspace{0.5cm} r_3=\frac{1}{r^2}, \hspace{0.5cm} r\in (-\sqrt[3]{2},-1],
\end{equation}
which mean that $(r,r)\in \parc \Omega_0$.

Let $\Gamma_0$ denote, as usual, the orbit of \eqref{system2} that corresponds to the initial conditions $\alfa(0)=\beta(0)=0$, for the original system \eqref{system}; see Definition \ref{def:gamma0}. By our analysis of Section \ref{sec:orbits}, $\Gamma_0$ starts from $(0,0)$ and stays initially above $s+t=0$, by \eqref{degecon}. Thus, as we have just discussed, $\Gamma_0$ intersects $s+t=0$ at some point.

\begin{definition}\label{def:dege}
For any $r\in (-\sqrt[3]{2},-1]$, we denote by $\hat{\alfa}=\hat{\alfa}(r)\in (0,1/r^2]$ the value such that $\Gamma_0$ intersects the diagonal $s+t=0$ at $(\hat{\alfa},-\hat{\alfa})$. See Figure \ref{fig:fig8}.
\end{definition}
\begin{figure}[h]
\begin{center}
\includegraphics[height=5.6cm]{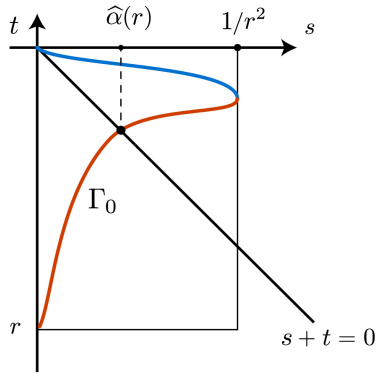} 
 \caption{The trajectory $\Gamma_0$ and the definition of $\hat{\alfa}(r)$.}\label{fig:fig8}
\end{center}
\end{figure}

We next compute the value of $\hat{\alfa}(r)$, and show that it is unique.
\begin{theorem}\label{th:dege}
Given $r\in (-\sqrt[3]{2},-1]$, let us define in terms of $H(x)$ in \eqref{defache} the functions 
\begin{equation}\label{def:hg}
\cF(x):= H(x) H(-x), \hspace{1cm} \cG(x):= \frac{H(x)}{H(-x)},
 \end{equation} 
both of them defined on $(0, 1/r^2].$ 

Also, let $r^{\sharp} \approx -1.155867$ be the unique solution to $\theta(x)=-1$, where 
\begin{equation}\label{defteta}
\theta(x):=-(3+2\sqrt{2})\left( \frac{\sqrt{1-x^3}-\sqrt{2}}{\sqrt{1-x^3}+\sqrt{2}}\right)^{\frac{1}{\sqrt{1-x^3}}}.
\end{equation}
\begin{enumerate}
\item
If $r\in [r^{\sharp},-1]$, then $\hat{\alfa}(r)$ is the unique value such that $\cF(\hat{\alfa}(r))=-1$.
\item
If $r\in (-\sqrt[3]{2},r^{\sharp}]$, then $\hat{\alfa}(r)$ is the unique value such that $\cG(\hat{\alfa}(r))=-1$.
\end{enumerate}
Moreover, if $r=r^{\sharp}$, then $\hat{\alfa}(r)=1/(r^{\sharp})^2$.
\end{theorem}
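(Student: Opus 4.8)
The plan is to exploit the two explicit conserved quantities furnished by the closed-form solutions \eqref{solude} and \eqref{solude2}, and to track the orbit $\Gamma_0$ through the two arcs that compose it. By the phase-space description of this section, $\Gamma_0$ leaves the origin above the diagonal $s+t=0$, runs as a trajectory of \eqref{system4}-$(a)$ until it meets the right wall $s=1/r^2$ in finite $\landa$-time, bounces, and continues as a trajectory of \eqref{system4}-$(b)$ down to its endpoint $(0,r)$. Along the first (phase-$(a)$) arc, \eqref{solude} shows that the ratio $H(s)/H(t)$ is constant; along the second (phase-$(b)$) arc, \eqref{solude2} shows that the product $H(s)H(t)$ is constant. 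The whole argument then reduces to (i) identifying these two constants for $\Gamma_0$, (ii) deciding on which arc the unique crossing of $s+t=0$ occurs, and (iii) reading off the equation $\cG(\hat{\alfa})=-1$ or $\cF(\hat{\alfa})=-1$ and proving it has a unique root.

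For step (i) I would compute the phase-$(a)$ constant from the behaviour as $\landa\to-\8$. Since $\alfa(0)=\beta(0)=0$, formulas \eqref{def:h} and \eqref{def:k} give $\alfa'(0)\beta'(0)=h=q'(0)$ and $\alfa'(0)^2=k=q(0)=1$; feeding the linearizations $\alfa(u)\approx\alfa'(0)u$, $\beta(u)\approx\beta'(0)u$ into the change of variables \eqref{change} yields $s(\landa)\sim u$ and $t(\landa)\sim -u$ as $u\to 0^+$. On the other hand, the singular factor of $H$ in \eqref{defache} behaves like $4/(xr^2)$ near $x=0$, so $H(x)\sim C/x$ with the \emph{same} constant $C$ as $x\to 0^\pm$. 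Hence $H(s)/H(t)\sim t/s$, which tends to $-1$; being constant on the phase-$(a)$ arc, this ratio equals $-1$ throughout, i.e. $H(s)\equiv -H(t)$. Evaluating at the wall, where $H(1/r^2)=1$, forces $H(t_*)=-1$ at the bounce value $t_*$, so by continuity the phase-$(b)$ constant is $H(s)H(t)\equiv 1\cdot(-1)=-1$.

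The dichotomy in step (ii) is governed by the position of $t_*$ relative to $-1/r^2$, the height of the diagonal at the wall. As $H$ is monotone on $(r,0)$ and $H(-1/r^2)=\theta(r)$ — a direct check, using $\sqrt{1-(-1/r^2)r^2}=\sqrt 2$ and $(1+\sqrt2)/(1-\sqrt2)=-(3+2\sqrt2)$, identifies the right-hand side of \eqref{defache} at $x=-1/r^2$ with \eqref{defteta} — the sign of $\theta(r)+1$ decides whether $\Gamma_0$ has already crossed $s+t=0$ before bouncing. If $\theta(r)\ge -1$ the crossing is in phase $(b)$ and gives $H(\hat{\alfa})H(-\hat{\alfa})=-1$, i.e. $\cF(\hat{\alfa})=-1$; if $\theta(r)\le -1$ it is in phase $(a)$ and gives $H(\hat{\alfa})/H(-\hat{\alfa})=-1$, i.e. $\cG(\hat{\alfa})=-1$. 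A logarithmic differentiation of \eqref{defteta} shows $\theta$ is monotone, which identifies the threshold with the unique root $r^{\sharp}$ of $\theta(x)=-1$ and splits the interval exactly as stated; at $r=r^{\sharp}$ the crossing sits on the wall, so $\hat{\alfa}=1/(r^{\sharp})^2$ and both $\cF,\cG$ equal $-1$ there.

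Finally, for uniqueness in the two characterizations I would use $H'/H=-1/(x\sqrt{q(x)})$, read off from \eqref{defache}. For $\cF$ this gives
\[
(\ln|\cF|)'(x)=-\frac1x\left(\frac{1}{\sqrt{q(x)}}+\frac{1}{\sqrt{q(-x)}}\right)<0
\]
on $(0,1/r^2)$, so $\cF$ is strictly monotone and $\cF=-1$ has at most one root; since $\cF\to-\infty$ at $0^+$ and $\cF(1/r^2)=\theta(r)\ge-1$ in this range, the root exists. For $\cG$ one finds that $(\ln|\cG|)'$ has the sign of $q(x)-q(-x)=-2x(x^2-h)$, so $\cG$ is monotone on $[\sqrt h,1/r^2]$; combined with the slope computation in the proof of Proposition \ref{pro:orbits} (a phase-$(a)$ crossing forces $\hat{\alfa}\ge\sqrt h$, because $m(s)\le -1$ only for $s\ge\sqrt h$) this yields a unique admissible root, the branch $(0,\sqrt h)$ being excluded since $\cG<-1$ there. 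The main obstacle I anticipate is step (i): rigorously pinning the phase-$(a)$ constant to the exact value $-1$, a matching computation at the degenerate equilibrium $(0,0)$ where $s,t$ and $H$ are simultaneously singular. The secondary difficulty is the non-monotonicity of $\cG$, which is handled precisely by the restriction to $[\sqrt h,1/r^2]$ provided by the slope bound.
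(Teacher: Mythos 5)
Your proposal is correct and follows essentially the same route as the paper's proof: both arguments rest on the two first integrals $H(s)/H(t)$ and $H(s)H(t)$ from \eqref{solude}--\eqref{solude2}, pin both constants to $-1$ (your asymptotic computation $H(x)\sim C/x$, $s\sim u$, $t\sim -u$ at the origin is exactly how the paper evaluates $c_0=\lim_{s_0\to 0}H(s_0)/H(-s_0)=-1$, and the evaluation $H(1/r^2)=1$, $H(t_*)=-1$ at the wall is identical), locate the threshold at $\theta(r)=-1$, and get uniqueness from the monotonicity of $\cF$ and the single-minimum structure of $\cG$. The only (minor) variation is the mechanism for the dichotomy: you compare the bounce ordinate $t_*$ with $-1/r^2$ via the monotonicity of $H$ on $(r,0)$ and the identity $H(-1/r^2)=\theta(r)$ (which implicitly uses the at-most-one-crossing fact from Proposition \ref{pro:orbits}), whereas the paper rules out a crossing on the wrong arc directly from the ranges of $\cG$ (Case 1) and $\cF$ (Case 2); both lead to the same conclusion.
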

\begin{proof}
Initially, i.e. for $\landa\approx -\8$, $\Gamma_0$ is an orbit of system \eqref{system4}-$(a)$, with $\ep=1$. By the general solution \eqref{solude} to this system we have the first integral
\begin{equation}\label{first}
\frac{H(s)}{H(t)} = {\rm const}
\end{equation}
along any trajectory. We now look at the orbits of the phase space $\cR_+$ associated to \eqref{system4}-$(a)$ that intersect the diagonal $s+t=0$. For any such point $(s_0,-s_0)$ in the diagonal, let $c(s_0)$ be its corresponding constant $c(s_0):=H(s_0)/H(-s_0)=\cG(s_0)$ for \eqref{first}.
Let $c_0\in \R$ denote the integration constant for $\Gamma_0$ associated to \eqref{first}. Clearly, $c_0$ is the limit as $s_0\to 0$ of the constants $c(s_0)$.  It follows directly from \eqref{defache} that $$c_0= \lim_{s_0\to 0}\frac{H(s_0)}{H(-s_0)}=-1.$$ So, one has $H(s)=-H(t)$ along the orbit $\Gamma_0$, while we stay in the phase space $\cR_+$ of \eqref{system4}-$(a)$, i.e., as long as the orbit $\Gamma_0$ does not hit the \emph{wall} $s=1/r^2$.

The function $\cF(x)$ is strictly increasing, with $\cF(x)\to -\8$ as $x\to 0$, and so its maximum value is $$\cF(1/r^2)= \theta(r),$$ where $\theta(x)$ is given by \eqref{defteta}. The function $\cG(x)$ has a unique critical point, a minimum, at $$x_G= \sqrt{-\frac{1}{r}(2+r^3)},$$ and so the maximum of $\cG$ in $[0,1/r^2]$ is the largest value between $$\cG(0)=\lim_{x\to 0^+} \cG(x)=-1,\hspace{1cm} \cG(1/r^2)= \frac{1}{\theta(r)}.$$ 

{\bf Case 1:} \emph{$r\in (r^{\sharp},-1]$, where $r^{\sharp}:=\theta^{-1}(-1)$ for $\theta(x)$ in \eqref{defteta}.}

The function $\theta(x)$ in \eqref{defteta} is increasing in $[-\sqrt[3]{2},-1]$. Thus, in this Case 1 we have $\theta(r)>-1$, and so $\cG(x)<-1$ for every $x\in [0,1/r^2]$. In particular $H(x)\neq -H(-x)$. Since the relation $H(s)=-H(t)$ holds along the orbit $\Gamma_0$ until it leaves the phase space $\cR_+$, we deduce then that $\Gamma_0$ cannot intersect the diagonal $s+t=0$ while in $\cR_+$.

Let $(1/r^2,t_0)$ be the point of the \emph{wall} of $\cR$ that is reached by $\Gamma_0$. Since $H(1/r^2)=1$ and $H(s)=-H(t)$ along $\Gamma_0$, we have $H(t_0)=-1$.

By the analysis in Section \ref{sec:orbits}, the orbit $\Gamma_0$ extends analytically beyond $(1/r^2,t_0)$ by passing to the phase space $\cR_-$ associated to system \eqref{system4}-$(b)$. Since $\Gamma_0$ must intersect $s+t=0$ and did not do it while in $\cR_+$, there must exist some point 
$(\hat{\alfa},-\hat{\alfa}) \in \cR_-$ met by $\Gamma_0$. But now, the general solution \eqref{solude2} to \eqref{system4}-$(b)$ provides the first integral 
\begin{equation}\label{firs2}H(s) H(t) ={\rm constant}
\end{equation}
along any such trajectory in $\cR_-$. Since $\Gamma_0$ passes through $(1/r^2,t_0)$ and $H(1/r^2)=-H(t_0)=1$, the constant in \eqref{firs2} associated to $\Gamma_0$ is $-1$. Thus, $\cF(\hat{\alfa})=-1$, as claimed in the statement.

{\bf Case 2:} \emph{$r\in (-\sqrt[3]{2},r^{\sharp})$, where $r^{\sharp}:=\theta^{-1}(-1)$ for $\theta(x)$ in \eqref{defteta}.}

The proof is analogous, so we merely sketch it. This time $\theta(r)<-1$, and so $\cF(x)< -1$ for every $x$. This is used to show by means of the previous arguments that $\Gamma_0$ cannot intersect $s+t=0$ after reaching the \emph{wall}, i.e., it must intersect it while still in the phase space $\cR_+$. At that intersection point $(\hat{\alfa},-\hat{\alfa}) \in \cR_+$, using that $H(s)=-H(t)$ along $\Gamma_0$ when in $\cR_+$, we must have $\cG(\hat{\alfa})=-1$, as stated. This finishes the proof of Case 2.

Finally, when $r=r^{\sharp}$, the orbit $\Gamma_0$ intersects $s+t=0$ exactly at the point $(1/r^2,-1/r^2)$. That is, if $r=r^{\sharp}$, we have 
$\hat{\alfa}(r^{\sharp})=1/(r^{\sharp})^2$. 
\end{proof}

\section{Compact minimal annuli with spherical curvature lines}\label{sec:wei}

\subsection{Weierstrass data}\label{subsec:wei}

Fix $(r_1,r_2)\in W$, where $W\subset \R^2$ is given by \eqref{omw}, and define $r_3:=1/(r_1 r_2)$.
Denoting $b:=\frac{1}{3}(r_1+r_2+r_3)$, let $e_1,e_2,e_3\in \R$ be 
\begin{equation}\label{def:ej}
e_j= \frac{b-r_j}{4}, \hspace{1cm} j=1,2,3.
\end{equation}
Note that $e_1+e_2+e_3=0$. Denote \begin{equation}\label{def:gs}
g_2 = -4(e_1 e_2 + e_2 e_3 + e_3 e_1), \hspace{0.5cm} g_3 = 4 e_1 e_2 e_3.
\end{equation}
It follows that $\Delta_{\rm mod} :=g_2^3-27 g_3^2 >0$, and so these values define a rectangular lattice $\Lambda$ in $\C$.

Let $\wp$ be the Weierstrass $P$-function associated to $\Lambda$. It satisfies 
 \begin{equation}\label{odepw}
\wp'^2 = 4 \wp^3- g_2 \wp - g_3 = 4(\wp -e_1)(\wp -e_2)(\wp-e_3).
\end{equation} 
For all the properties of $\wp$ and other associated Weierstrass elliptic functions that will be used in this paper, see e.g. \cite{WW,Ch}.

Choose generators $2\omega_1\in \R$ and $2\omega_2 \in i\R$ for the rectangular lattice $\Lambda$, so that $\omega_1>0$, and ${\rm Im}(\omega_2)>0$. At the half-periods, we have 
\begin{equation}\label{wehalf}
\wp(\omega_1)=e_1>0, \hspace{0.4cm} \wp(\omega_2)= e_3<0, \hspace{0.4cm} \wp(\omega_1+\omega_2)=e_2\in (e_3,e_1).
\end{equation}
Also, $\wp'(\omega_1)=\wp'(\omega_2)=\wp'(\omega_1+\omega_2)=0$. By construction, $b$ satisfies \eqref{cubicb} with respect to $g_2,g_3$.

Define the doubly periodic meromorphic function 
\begin{equation}\label{defF}
\phi(z):= b- 4 \wp (z+ \omega_1).
\end{equation}
Note that $\phi$ is real along both $\R$ and $i\R$, and it has no poles in the strip $|{\rm Re}(z)|<\omega_1$. Also, $\phi$ has poles at the points of the lattice $\mathcal{Z}:=\Lambda - \omega_1$ for which it is periodic. The values of $\phi$ along $i\R$ are contained in the interval $[r_1,r_2]$, with $\phi(0)=r_1$ and $\phi(\omega_2)=r_2$. In particular, $\phi(z)<0$ for every $z\in i\R$.

\begin{remark}
An alternative expression for $\phi$ in terms of $r_1,r_2,r_3$ is
\begin{equation}\label{Falt}
\phi(z) = r_1 + \frac{(r_1-r_3)(r_1-r_2)}{b-r_1-4 \wp(z)}.
\end{equation}
This follows from the general formula (\cite[p. 444]{WW}) $$\wp(z+\omega_1)=e_1 +\frac{(e_1-e_2)(e_1-e_3)}{\wp(z)-e_1}.$$
\end{remark}

\begin{lemma}\label{odeF}
$\phi$ satisfies the differential equation $\phi'(z)^2 = \hat{q}(\phi(z))$, where
\begin{equation}\label{pora}
\hat{q}(x):= -(x -r_1)(x-r_2)(x-r_3).
\end{equation}
\end{lemma}
\begin{proof}
It follows after a computation from \eqref{Falt}, \eqref{odepw}, and the definition of the constants. 
\end{proof}

Define next \begin{equation}\label{w6}
g(z) = \hat{g}_0 \, {\rm exp} \left(\int_0^z \frac{1}{\phi(\nu)} d\nu \right),
\end{equation}
where $\hat{g}_0>0$ is a positive constant. So, $g$ is the unique solution to $g/g'=\phi$ with $g(0)=\hat{g}_0$. 

Denoting $z=u+iv$, define the minimal surface $\Sigma=\Sigma(r_1,r_2;\hat{g}_0)$ given by
\begin{equation}\label{weipar}
\psi(u,v)\equiv \psi(z) = 2 {\rm Re} \int_0^z \Phi(z) dz : \cU \subset \R^2 \flecha \R^3,
\end{equation}
where $\cU=\{(u,v): |u|<\omega_1\}$ and $\Phi$ is given by \eqref{weidata} in terms of $\phi$ in \eqref{defF} and $g$ in \eqref{w6}. Note that $\psi(0,0)$ is the origin of $\R^3$. Since $\phi$ has no poles on $\cU$, $\psi$ is a conformally immersed minimal surface, with Gauss map $g$. 

From our discussion in Section \ref{sec:prelim} we have:

\begin{lemma}\label{lem:sigen}
The surface $\Sigma$ given by \eqref{weipar} has the following properties:
\begin{enumerate}
\item
Each curve $\psi(u_0,v)$ is a curvature line contained in some sphere $S(c(u_0),R(u_0))$. 
\item
All the centers $c(u)$ lie in a common vertical line $L$ of $\R^3$.
\item
$\psi(0,v)$ is a planar curvature line lying in the $x_3=0$ plane, and for every other $u_0\in (-\omega_1,\omega_1)$, with $u_0\neq 0$, the curvature line $\psi(u_0,v)$ lies in a sphere of finite radius $R(u_0)$.
\item
$\Sigma$ is symmetric with respect to the $x_2=0$ plane.
 \item
$\Sigma$ intersects the $x_3=0$ along $\psi(0,v)$ with a constant angle $\theta$ given by 
\begin{equation}\label{anginfor}
\cos \theta=\frac{\hat{g}_0^2-1}{\hat{g}_0^{2}+1}.
\end{equation}
\item
If $\hat{g}_0=1$, i.e. $\theta=\pi/2$, then $\Sigma$ is symmetric with respect to the $x_3=0$ plane.
\end{enumerate}
\end{lemma}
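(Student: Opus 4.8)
The plan is to prove the six properties by combining the general Weierstrass theory of Section \ref{sec:prelim} with the reality and symmetry features of the specific data $\phi$ in \eqref{defF} and $g$ in \eqref{w6}. Properties (1) and (2) are essentially not new: the immersion \eqref{weipar} is an instance of the general construction of Subsection \ref{sec:were}, the only difference being the half-period shift $z\mapsto z+\omega_1$ in the argument of $\wp$. Since $b$ satisfies the cubic \eqref{cubicb} and, by Lemma \ref{odeF}, $\phi$ obeys $\phi'^2=\hat q(\phi)$, the datum $\phi=b-4\wp(z+\omega_1)$ is admissible, and the shift merely translates the domain of definition. Hence $\Sigma$ is a minimal surface with spherical curvature lines, and, as recalled in Section \ref{sec:prelim}, each $v$-curve lies on a sphere $S(c(u_0),R(u_0))$ while all the centers $c(u)$ lie on a common vertical line $L$. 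This yields (1) and (2).

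Next I would extract three elementary facts about $\phi$ and $g$ on the axes: (a) $\phi$ is real on $\R$ and on $i\R$ (as noted after \eqref{defF}); (b) $\phi(\bar z)=\overline{\phi(z)}$, whence $g(\bar z)=\overline{g(z)}$, by conjugating the integral in \eqref{w6}; and (c) $\phi$ is even, $\phi(-z)=\phi(z)$, because $\wp$ is even and $2\omega_1$-periodic, so that $g(-z)=\hat{g}_0^2/g(z)$ from \eqref{w6}. For the planarity in (3), along $z=iv$ the third coordinate $\phi_3=\phi$ is real, so $\int_0^{iv}\phi_3\,dz$ is purely imaginary and $\psi_3(0,v)\equiv 0$; thus $\psi(0,v)$ is a planar curvature line in $\{x_3=0\}$. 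For (4), inserting $\phi(\bar z)=\overline{\phi(z)}$ and $g(\bar z)=\overline{g(z)}$ into \eqref{weidata} gives $\phi_1(\bar z)=\overline{\phi_1(z)}$, $\phi_2(\bar z)=-\overline{\phi_2(z)}$, $\phi_3(\bar z)=\overline{\phi_3(z)}$; integrating and taking real parts yields $\psi(u,-v)=(\psi_1,-\psi_2,\psi_3)(u,v)$, the reflection across $\{x_2=0\}$.

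For (5) I would evaluate $|g|$ on the central curve: since $\phi$ is real and negative on $i\R$, the integral in \eqref{w6} is purely imaginary along $z=iv$, so $|g(iv)|=\hat{g}_0$ for every $v$. As the Weierstrass normal satisfies $N_3=(|g|^2-1)/(|g|^2+1)$, this is the constant $(\hat{g}_0^2-1)/(\hat{g}_0^2+1)$ along $\psi(0,v)$, which is exactly \eqref{anginfor}. For (6), when $\hat{g}_0=1$ fact (c) gives $g(-z)=1/g(z)$, hence $\phi_1(-z)=-\phi_1(z)$, $\phi_2(-z)=\phi_2(z)$, $\phi_3(-z)=\phi_3(z)$; integrating produces the rotation by $\pi$ about the $x_1$-axis, $\psi(-u,-v)=(\psi_1,-\psi_2,-\psi_3)(u,v)$. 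Composing this with the reflection from (4) gives $\psi(-u,v)=(\psi_1,\psi_2,-\psi_3)(u,v)$, i.e. the reflection across $\{x_3=0\}$.

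The step I expect to be delicate is the \emph{second} half of (3): that $R(u_0)<\8$ for every $u_0\in(-\omega_1,\omega_1)\setminus\{0\}$. By \eqref{radan} this is equivalent to $\alfa(u_0)\neq 0$, i.e. to the claim that the central geodesic is the only infinite-radius curvature line inside the strip. The flat ends of $\psi$ occur precisely at the poles of $\phi$, which form the lattice $\mathcal{Z}=\Lambda-\omega_1$ and therefore lie outside $\cU$, so no unbounded planar curve meets the strip; it remains to exclude \emph{bounded} planar lines other than $u_0=0$. Here I would translate ``$\alfa(u_0)=0$'' into the Weierstrass picture through the change \eqref{change} and Lemma \ref{orbija}: the associated trajectory $(s(\landa),t(\landa))$ satisfies $s(\landa)>0$ and $t(\landa)<0$ for all finite $\landa$, vanishing only in the limits $\landa\to\pm\8$, which correspond to $u\to\pm\omega_1$. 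Since $\alfa^2=-st$, this forces $\alfa$ to vanish in $(-\omega_1,\omega_1)$ only at the single interior point $u=0$, where $\phi$ attains its maximal real value $r_1$. Making this dictionary between the parameter $u$ of the Weierstrass data and the Hamiltonian parameter precise — in particular locating the central geodesic at the unique interior zero of $\alfa$ — is the main technical point of the lemma.
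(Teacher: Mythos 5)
Your treatment of items (1), (2), (4), (5) and (6), and of the planarity of $\psi(0,v)$ in item (3), is correct and is essentially the paper's own argument: (1)--(2) are quoted from Wente's general theory modulo the harmless shift $z\mapsto z+\omega_1$, and the symmetry and angle statements follow from the reality of $\phi$ and $g$ along $\R$, the reality of $\phi$ and constancy $|g|\equiv\hat{g}_0$ along $i\R$, and $N_3=(|g|^2-1)/(|g|^2+1)$, exactly as in the paper (your explicit composition of the rotation about the $x_1$-axis with the reflection in $x_2=0$ to get item (6) is a correct unpacking of what the paper leaves implicit).

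The gap is in the second half of item (3), precisely where you flag the ``delicate'' step. Reducing $R(u_0)=\8$ to $\alfa(u_0)=0$ via \eqref{radan} is fine, but the phase-space argument you then sketch is circular: Lemma \ref{orbija} tells you that an orbit parametrizes a maximal open interval $\mathcal{J}$ on which $\alfa\neq 0$, with $\alfa$ vanishing at its endpoints, and the assertion that the limits $\landa\to\pm\8$ ``correspond to $u\to\pm\omega_1$'' is exactly the claim that $\alfa$ has no zero in $(0,\omega_1)$, i.e.\ the statement to be proved. Moreover, the dictionary between the Weierstrass parameter $u$ and the Hamiltonian data --- in particular the identification of the polynomial $q(x)$ of \eqref{spo} attached to $\Sigma$ with $\hat{q}(x)$ of \eqref{pora} --- is only established in the paper inside the proof of Proposition \ref{taucero}, and there only under the normalization $\hat{g}_0=1$, whereas Lemma \ref{lem:sigen} is stated for arbitrary $\hat{g}_0>0$. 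The paper closes item (3) by a direct route that avoids all of this: if $R(u_0)=\8$ then $\psi(u_0,v)$ is planar, and since all the centers $c(u)$ lie on a common \emph{vertical} line the limiting plane must be horizontal, i.e.\ $\psi_3(u_0,\cdot)$ would be constant; but $\frac{d}{dv}\psi_3(u_0,v)=-2\,{\rm Im}\,\phi(u_0+iv)$, and for a rectangular lattice $\wp$ is real along a vertical line $a+i\R$ only when $a\in\omega_1\Z$, so ${\rm Im}\,\phi\not\equiv 0$ on $u_0+i\R$ for every $u_0\in(-\omega_1,\omega_1)$ with $u_0\neq 0$. Hence $R(u_0)<\8$, with no reference to the orbit analysis. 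You should substitute this argument for your Hamiltonian detour.
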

\begin{proof}
Items (1) and (2) hold by Wente \cite{W}, see our discussion in Section \ref{sec:prelim}. Note that we have made a translation $z\mapsto z+\omega_1$ in the conformal parameter $z=u+iv$ with respect to the formulas presented in Section \ref{sec:were}, but this does not affect the properties of $\Sigma$ detailed there. Since $\phi$ is real along $i\R$, it follows by \eqref{weidata} that $\psi(0,v)$ lies in the $x_3=0$ plane. Since the imaginary part of $\phi$ is never identically zero along $u_0 + i\R$ for any $u_0\in (-\omega_1,\omega_1)$ with $u_0\neq 0$, it follows that $\psi(u_0,v)$ does not lie in a horizontal plane, and so $R(u_0)$ is finite. This proves item (3).

Item (4) follows directly from \eqref{weidata}, since $\phi,g$ are real along $\R$. Also, by \eqref{w6}, we have that $|g|=\hat{g}_0$ along $i\R$, and so \eqref{anginfor} holds. Item (6) follows again from \eqref{weidata}, since if $\hat{g}_0=1$, we must have $\phi(z)\in \R$ and $|g(z)|=1$ along $i\R$.
\end{proof}

In Lemma \ref{gex} below we give an explicit expression of the Gauss map $g$ in terms of the Weierstrass zeta and sigma functions. Recall that these classical functions satisfy $\zeta'(z)=-\wp(z)$ and $\sigma'(z)/\sigma(z) =\zeta(z)$.

It is a classical property of Weierstrass functions that $\wp(z)$ is real and injective along the boundary of the rectangle generated by the half-periods $\omega_1,\omega_2$. Thus, there exists exactly one value $\mu$ in the boundary of that rectangle where $4\wp(\mu)=b$. By \eqref{def:ej}, and taking into account the values of $\wp$ at the half-periods, see \eqref{wehalf}, we have $$4 \wp(\omega_1+\omega_2) - b = -r_2>0, \hspace{0.5cm} 4\wp(\omega_2)-b = -r_3 <0.$$
So, $\mu$ lies in the horizontal segment between $\omega_2$ and $\omega_1+\omega_2$. 
The function $\wp(z)$ is increasing along that segment, and hence, from \eqref{odepw} and \eqref{cubicb} we have $\wp'(\mu)=1/4$.

\begin{lemma}\label{gex}
Let $\mu\in \C$ be the unique number of the form $\mu=x+\omega_2$, with $x\in (0,\omega_1)$, such that $b = 4\wp(\mu)$. Then, $g$ in \eqref{w6} can be written alternatively as 
\begin{equation}\label{w80}
g(z):= g_0 \, {\rm exp}\left(-2\zeta(\mu) \left( z + \omega_1\right) \right) \left[ \frac{\sigma(\mu+  \left( z + \omega_1\right))}{\sigma(\mu- \left( z + \omega_1\right))}\right]
\end{equation}
where the constant $g_0\in \R$ is given by 
 \begin{equation}\label{w100}
 g_0:= -\hat{g}_0 \, {\rm exp} \big( 2( \omega_1  \zeta(\mu) - \mu \, \zeta(\omega_1))\big) \in \R.
 \end{equation}
\end{lemma}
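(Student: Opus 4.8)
The goal is to prove Lemma \ref{gex}, which gives an explicit formula \eqref{w80} for the Gauss map $g$ defined in \eqref{w6} as the solution to $g/g'=\phi$ with $g(0)=\hat{g}_0$. My plan is to verify the formula by a direct logarithmic-derivative computation, since $g$ is characterized uniquely by the first-order ODE $g'/g = 1/\phi$ together with the normalization $g(0)=\hat{g}_0$; thus it suffices to check that the proposed right-hand side of \eqref{w80} satisfies both conditions.

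\medskip

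First I would compute the logarithmic derivative of the candidate expression. Writing $w := z+\omega_1$, the right-hand side of \eqref{w80} is $g_0\,\exp(-2\zeta(\mu)w)\,\sigma(\mu+w)/\sigma(\mu-w)$. Using $\sigma'/\sigma = \zeta$ and $\zeta' = -\wp$, its logarithmic derivative in $z$ (equivalently in $w$) equals
\begin{equation}\label{logder}
\frac{g'(z)}{g(z)} = -2\zeta(\mu) + \zeta(\mu+w) + \zeta(\mu-w).
\end{equation}
The key step is to recognize that the right-hand side of \eqref{logder} equals $1/\phi(z) = 1/(b-4\wp(w))$. This is exactly the classical addition-type identity for the Weierstrass zeta function: for fixed $\mu$, the combination $\zeta(\mu+w)+\zeta(\mu-w)-2\zeta(\mu)$ is an elliptic function of $w$ whose only poles (modulo $\Lambda$) are simple poles at $w\equiv \pm\mu$, and one has the standard formula
\begin{equation}\label{zetaid}
\zeta(\mu+w)+\zeta(\mu-w)-2\zeta(\mu) = \frac{\wp'(\mu)}{\wp(w)-\wp(\mu)}.
\end{equation}
Since $\mu$ was chosen so that $4\wp(\mu)=b$, and since it was observed just before the statement that $\wp'(\mu)=1/4$, the right-hand side of \eqref{zetaid} becomes $(1/4)/(\wp(w)-b/4) = 1/(4\wp(w)-b) = -1/\phi(z)$. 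A sign must be tracked carefully here: I would confirm the orientation of $\mu$ (it lies on the horizontal segment between $\omega_2$ and $\omega_1+\omega_2$, where $\wp$ increases, forcing $\wp'(\mu)=+1/4$) so that the identity produces $+1/\phi$ rather than $-1/\phi$.

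\medskip

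Second, having matched the logarithmic derivatives, $g$ in \eqref{w6} and the expression in \eqref{w80} differ by at most a multiplicative constant. I would fix that constant by evaluating at $z=0$, i.e. at $w=\omega_1$. From \eqref{w6} we have $g(0)=\hat{g}_0$, so I need the proposed formula to give the same value. Evaluating the candidate at $w=\omega_1$ yields
\[
g_0\,\exp(-2\zeta(\mu)\omega_1)\,\frac{\sigma(\mu+\omega_1)}{\sigma(\mu-\omega_1)},
\]
and using the quasi-periodicity relation $\sigma(x+2\omega_1)=-e^{2\zeta(\omega_1)(x+\omega_1)}\sigma(x)$ (with $\zeta(z+2\omega_1)=\zeta(z)+2\zeta(\omega_1)$) applied to $\sigma(\mu+\omega_1)=\sigma((\mu-\omega_1)+2\omega_1)$, the ratio $\sigma(\mu+\omega_1)/\sigma(\mu-\omega_1)$ simplifies to $-e^{2\zeta(\omega_1)\mu}$. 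Substituting the definition \eqref{w100} of $g_0$ then gives exactly $\hat{g}_0$, as required. The formula \eqref{w80} therefore agrees with \eqref{w6} both in logarithmic derivative and initial value, hence they coincide.

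\medskip

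The main obstacle I anticipate is \emph{not} conceptual but bookkeeping: correctly applying the quasi-periodicity shift formulas for $\sigma$ and $\zeta$ with the right exponential factors and signs, so that the constant $g_0$ in \eqref{w100}---with its explicit factor $\exp(2(\omega_1\zeta(\mu)-\mu\zeta(\omega_1)))$ and the leading minus sign---emerges precisely. The minus sign in \eqref{w100} is produced by the $-1$ in the $\sigma$ quasi-periodicity relation, and verifying that all these factors cancel to leave $\hat{g}_0$ (a positive real) is the one place where an error would most easily creep in. I would also double-check that \eqref{w80} is globally well-defined as a meromorphic function on $\C$ matching $g$, noting that although the individual factors $\exp(-2\zeta(\mu)w)$ and the $\sigma$-quotient are only quasi-periodic, their product has the correct (multiplicatively periodic) monodromy consistent with $g$ being single-valued up to the behavior forced by the poles of $1/\phi$.
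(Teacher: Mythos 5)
Your proposal is correct and follows essentially the same route as the paper: differentiate the candidate expression logarithmically, reduce $\zeta(\mu+w)+\zeta(\mu-w)-2\zeta(\mu)$ to $1/\phi$ via the classical zeta addition identity together with $4\wp(\mu)=b$ and $\wp'(\mu)=1/4$, and then pin down the constant by evaluating at $z=0$ using the quasi-periodicity of $\sigma$ (the paper's \eqref{forsigma}), which produces exactly the factor $-e^{2\mu\zeta(\omega_1)}$ and hence \eqref{w100}. The one slip is in your statement of the addition identity: the correct form is $\zeta(\mu+w)+\zeta(\mu-w)-2\zeta(\mu)=\wp'(\mu)/\bigl(\wp(\mu)-\wp(w)\bigr)$ (as in \eqref{zetarel}), not with denominator $\wp(w)-\wp(\mu)$; with the correct denominator and $\wp'(\mu)=+1/4$ one gets $+1/\phi$ immediately, so the sign discrepancy you flag lives in your transcription of the identity rather than in the orientation of $\mu$.
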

\begin{proof}
Differentiating \eqref{w80} we have
$$\frac{g'(z)}{g(z)} = -2 \zeta(\mu) + \zeta(\mu-\left( z + \omega_1\right) ) + \zeta(\mu+\left( z + \omega_1\right) ).$$ We now use the general identity 
\begin{equation}\label{zetarel}
\zeta(z_1+z_2) + \zeta(z_1-z_2)= 2\zeta(z_1) + \frac{\wp'(z_1)}{\wp(z_1)-\wp(z_2)}
\end{equation}
with $z_1:=\mu$ and $z_2:= z+ \omega_1$, to obtain $$\frac{g'(z)}{g(z)} = \frac{\wp'(\mu)}{\wp(\mu)-\wp(z+\omega_1)}.$$ Using that $\wp'(\mu)=1/4$ and $b=4\wp(\mu)$, we deduce from there that $$\frac{g(z)}{g'(z)}= b - 4 \wp (z+ \omega_1)=\phi(z).$$
We now check the value of $g_0$ in order to have $g(0)=\hat{g}_0$. We use the following property of the Weierstrass $\sigma$ function, for $j,k\in \Z$:

\begin{equation}\label{forsigma}\frac{\sigma(z+ 2 j \omega_1 + 2 k \omega_2)}{\sigma(z)} = (-1)^{j + k + jk} {\rm exp} \big[ (2 j \zeta(\omega_1) + 2 k \zeta(\omega_2))(j \omega_1 + k \omega_2 + z)\big].
\end{equation}

If in \eqref{forsigma} we choose $z= \mu- \omega_1$ and $j=1$, $k=0$, we get 
\begin{equation}\label{ffsig}
 \frac{\sigma(\mu+ \omega_1)}{\sigma(\mu- \omega_1)} = - {\rm exp} (2 \mu\,  \zeta(\omega_1)).
 \end{equation} 
Thus, from \eqref{w80}, we obtain \eqref{w100}.
\end{proof}

\subsection{Period and symmetries of minimal annuli}

We now consider the period problem for $\Sigma$ along $i\R$, i.e., we discuss when are the spherical curvature lines $\psi(u_0,v)$ of $\Sigma$ periodic. We show in Lemma \ref{lemper} below that this is controlled by the number $\kappa\in \R$ given by
\begin{equation}\label{deper}
\kappa:= \frac{-1}{2\pi i} \int_0^{2\omega_2} \frac{1}{\phi(\nu)} d\nu.
\end{equation}
Note that, by \eqref{w6}, $\kappa$ measures the variation of $g(z)$ as $z$ varies from $0$ to $2\omega_2\in i\R$: recall that $|g(z)|=\hat{g}_0$, constant, if $z\in i\R$. Also, we observe that if $\psi(u,v)$ is periodic in the $v$-direction, this period must be a multiple of ${\rm Im}(2\omega_2)$, by the periodicity of $\phi$ and \eqref{weidata}.

\begin{lemma}\label{lemper}
The following three conditions are equivalent, for any $n\in \N^*$:
\begin{enumerate}
\item
$\psi(u+iv)=\psi(u+iv+2n \omega_2)$.
\item
$g(2n \omega_2)=g(0).$
\item 
There is some $m\in \Z$ such that $\kappa=m/n\in \Q$.
\end{enumerate}
\end{lemma}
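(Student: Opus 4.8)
The plan is to prove the cyclic chain of implications $(1)\Rightarrow(2)\Rightarrow(3)\Rightarrow(1)$, exploiting the explicit relationship between the Gauss map $g$ and the Weierstrass data along the imaginary axis. First I would establish $(1)\Rightarrow(2)$: since $\psi$ is a conformal minimal immersion with Weierstrass data $\Phi$ as in \eqref{weidata}, its Gauss map is the meromorphic function $g$, which is determined up to translation by the position $\psi(z)$ only through its logarithmic derivative. If $\psi(z)=\psi(z+2n\omega_2)$ for all $z$, then the immersion is genuinely periodic, hence its Gauss map must agree at corresponding points, giving $g(2n\omega_2)=g(0)$ (here one uses that $\psi(0,0)$ is the origin so the basepoint matches).

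Next, for $(2)\Rightarrow(3)$, I would compute $g(2n\omega_2)/g(0)$ directly from \eqref{w6}. By the definition of $g$ we have
\begin{equation}\label{ratiog}
\frac{g(2n\omega_2)}{g(0)} = {\rm exp}\left(\int_0^{2n\omega_2} \frac{1}{\phi(\nu)}\, d\nu\right) = {\rm exp}\left(n \int_0^{2\omega_2}\frac{1}{\phi(\nu)}\, d\nu\right),
\end{equation}
where the last equality uses the periodicity of $\phi$ under $z\mapsto z+2\omega_2$ (since $\phi(z)=b-4\wp(z+\omega_1)$ and $2\omega_2\in\Lambda$). By the definition \eqref{deper} of $\kappa$, the integral equals $-2\pi i\,\kappa$, so the ratio is ${\rm exp}(-2\pi i\, n\kappa)$. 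The condition $g(2n\omega_2)=g(0)$ is therefore equivalent to $n\kappa\in\Z$, i.e. to the existence of $m\in\Z$ with $\kappa=m/n\in\Q$. This simultaneously gives $(2)\Rightarrow(3)$ and the reverse arrow $(3)\Rightarrow(2)$, since \eqref{ratiog} is an exact identity.

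Finally, for $(3)\Rightarrow(1)$, I would argue that matching the Gauss map at the period is enough to force the full immersion to be periodic. The translation $z\mapsto z+2n\omega_2$ preserves $\phi$ (as $2n\omega_2\in\Lambda$), and if in addition $g(2n\omega_2)=g(0)$, then $g$ itself is invariant under this translation, because $g$ is the exponential of a primitive of $1/\phi$ and we have just matched both the periodic integrand and the multiplicative constant. Consequently the full Weierstrass one-form $\Phi$, which depends only on $\phi$ and $g$ through \eqref{weidata}, is invariant under $z\mapsto z+2n\omega_2$; integrating \eqref{weipar} then yields $\psi(z+2n\omega_2)=\psi(z)+\psi(2n\omega_2)$, and the residual translation vector $\psi(2n\omega_2)=2{\rm Re}\int_0^{2n\omega_2}\Phi\,dz$ vanishes because the curvature line $\psi(u,v)$ lies in a fixed sphere $S(c(u),R(u))$ and a nonzero translation period is incompatible with that curve being a closed curve on a sphere of finite radius.

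The main obstacle I expect is the last step, specifically verifying that the real translation period $\psi(2n\omega_2)$ actually vanishes rather than merely that $\Phi$ is $2n\omega_2$-periodic. Periodicity of $\Phi$ only gives periodicity of $\psi$ up to a translation by the period vector; one must rule out a residual drift. The cleanest route is to use the geometric structure established in Lemma \ref{lem:sigen}: each $v$-curve $\psi(u_0,v)$ lies on a sphere of finite radius for $u_0\neq 0$ (and is a bounded planar curve for $u_0=0$), so it cannot drift off to infinity, forcing the translation vector to be zero. Alternatively, one can verify vanishing of the real periods componentwise using the symmetry of $\Phi$ under complex conjugation along $i\R$, which makes the horizontal components of $\int_0^{2\omega_2}\Phi\,dz$ real or purely imaginary in a way that cancels the drift. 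I would prefer the geometric argument as it is more robust and directly invokes the already-proven sphere-foliation property.
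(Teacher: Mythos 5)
Your proposal is correct and follows essentially the same route as the paper: the identity $g(2n\omega_2)=g(0)\exp(-2\pi i n\kappa)$ for the equivalence of (2) and (3), and for (3)$\Rightarrow$(1) the observation that $\Phi$ is $2n\omega_2$-periodic so $\psi$ is periodic up to a translation vector, which must vanish because the spherical curvature lines are bounded. Your explicit attention to ruling out the residual translation is exactly the point the paper makes ("spheres are compact"), so there is nothing to add.
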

\begin{proof}
If (1) holds, then obviously $g(2n\omega_2)=g(0)$. Besides, from \eqref{w6}, we obtain directly 
\begin{equation}\label{interm}
g(2n\omega_2)=g(0) {\rm exp}(-2 \pi i \kappa n).
\end{equation}
Thus, $g(2n\omega_2)=g(0)$ is clearly equivalent to $\kappa=m/n$ for some $m\in \Z$, i.e., items (2) and (3) are equivalent.

Finally, we show that if (2) or (3) hold, then (1) must also hold. Indeed, in that case the Weierstrass data $\Phi$ in \eqref{weidata} satisfy $\Phi(z+2n\omega_2)=\Phi(z)$, and so, by \eqref{weipar}, $\psi(u+iv+2n \omega_2)=\psi(u+iv)+w_0$ for some translational period $w_0\in \R^3$. Since the curvature lines $\psi(u_0,v)$ are spherical (and spheres are compact), we have $w_0=0$, and hence (1) follows. 
\end{proof}

\begin{remark}
The value $\kappa$ in \eqref{deper} does not depend on the choice of $\hat{g}_0$.

\end{remark}

The quantity $\kappa$ in \eqref{deper} allows to control not only the period problem for $\psi(u,v)$ in the $v$-direction as explained in Lemma \ref{lemper}, but also the symmetries of the resulting minimal annuli when the period closes.


Specifically, with our current notations, assume that $\Sigma=\Sigma(r_1,r_2,\hat{g}_0)$ satisfies $\kappa=\frac{m}{n}$, where $n,m\in \N^*$ and $m/n$ is irreducible. Consider $\Sigma$ parametrized by $\psi(u,v):(-\omega_1,\omega_1)\times \R\flecha \R^3$ as in \eqref{weipar}. Then, its quotient by the isomorphism 
\begin{equation}\label{quotient}
(u,v)\mapsto (u,v+ 2 n {\rm Im}(\omega_2))
\end{equation}
is an immersed minimal annulus, by Lemma \ref{lemper}. \emph{We denote it by $\Sigma^*$}. We have from Lemma \ref{lem:sigen} and the Weierstrass representation that $\Sigma^*$ is symmetric with respect to:
\begin{enumerate}
\item
The plane $x_2=0$.
\item
The plane $x_3=0$, if $\hat{g}_0=1$, i.e., if $\beta(0)=0$.
\item
The rotations of angles $2\pi k/n$, with $k\in \{0,\dots, n-1\}$, around the vertical line $L$ of $\R^3$ that contains the centers of the spheres $S(c(u),R(u))$.
\end{enumerate}
Therefore, we have:

\begin{corollary}\label{rem:symmetries}
The symmetry group of the minimal annulus $\Sigma^*$ is isomorphic to the dihedral group $D_n$ if $\hat{g}_0\neq 1$, and to $D_n\times \Z_2$ if $\hat{g}_0=1$.
\end{corollary}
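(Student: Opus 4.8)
The plan is to realize the three listed symmetries as ambient isometries of $\R^3$ preserving the image of $\Sigma^*$, and then to identify the abstract group they generate. Write $\rho$ for the rotation of angle $2\pi/n$ about the vertical line $L$, so that the rotations in item (3) are exactly the powers $\rho^0,\dots,\rho^{n-1}$ and $\langle\rho\rangle\cong\Z_n$. Write $\sigma_2$ for the reflection across $\{x_2=0\}$ of item (1); since $\sigma_2$ preserves $\Sigma^*$ it preserves its center-line $L$, and as $L$ is vertical this forces $L\subset\{x_2=0\}$, so $\sigma_2$ fixes $L$ pointwise. When $\hat{g}_0=1$ we additionally have the reflection $\sigma_3$ across the horizontal plane $\{x_3=0\}$ of item (2).

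First I would check that $\langle\rho,\sigma_2\rangle\cong D_n$. Because $\sigma_2$ is a reflection in a vertical plane containing the rotation axis $L$, conjugation of a rotation about $L$ by $\sigma_2$ reverses its angle, so $\sigma_2\rho\sigma_2^{-1}=\rho^{-1}$; together with $\rho^n=\mathrm{id}$ and $\sigma_2^2=\mathrm{id}$ these are precisely the defining relations of the dihedral group. The resulting group consists of the $n$ rotations $\rho^k$ and the $n$ reflections $\rho^k\sigma_2$ across the $n$ vertical planes through $L$, a group of order $2n$ isomorphic to $D_n$.

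Next, for the case $\hat{g}_0=1$, I would show that adjoining $\sigma_3$ produces a direct factor $\Z_2$. The reflection $\sigma_3$ acts only on the $x_3$-coordinate while $\rho$ and $\sigma_2$ act only on $(x_1,x_2)$ and preserve $x_3$; hence $\sigma_3$ commutes with both generators and is central in $\langle\rho,\sigma_2,\sigma_3\rangle$. Moreover every element of $D_n=\langle\rho,\sigma_2\rangle$ preserves the $x_3$-coordinate, whereas $\sigma_3$ reverses it, so $\langle\sigma_3\rangle\cap D_n=\{\mathrm{id}\}$. Since $\sigma_3$ is a central involution meeting $D_n$ trivially, $\langle\rho,\sigma_2,\sigma_3\rangle\cong D_n\times\Z_2$, as claimed. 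In particular, for $n$ odd this recovers $D_n\times\Z_2\cong D_{2n}$, matching the group $D_{10}$ for the example with $n=5$ in Figure~\ref{fig:eje1}.

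Finally, to upgrade these generated groups to the full symmetry group one must verify that $\Sigma^*$ admits no further ambient symmetry, and I expect this last step to be the main obstacle. The argument should run as follows: any ambient isometry $T$ preserving $\Sigma^*$ must permute its curvature-line foliation and hence preserve the line $L$ of sphere-centers, and it must fix the distinguished central planar geodesic $\psi(0,v)$ (the unique spherical curvature line lying in a plane through $L$ and meeting $\partial\B^3$ at the prescribed angle). This pins $T$ down to preserving $L$ and the plane $\{x_3=0\}$ up to the flip $\sigma_3$, and to acting on the annulus as a symmetry of the ``flower'' pattern. The irreducibility of $m/n$ in $\kappa=m/n$ guarantees that this pattern has exactly $n$ cyclically permuted fundamental pieces and no accidental extra symmetry, so that $T$ lies in the group already exhibited; making this rigidity argument precise is the delicate point.
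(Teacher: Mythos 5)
Your first two paragraphs are correct, but they are not where the content of this corollary lies: the paper takes for granted that the three listed symmetries generate $D_n$ (resp. $D_n\times\Z_2$), and its entire proof is the maximality step that you defer to the end and explicitly leave as a sketch. As written, that step is a genuine gap: asserting that irreducibility of $m/n$ ``guarantees no accidental extra symmetry'' is the conclusion, not an argument. Your parenthetical characterization of the central geodesic is also off --- $\psi(0,v)$ lies in the plane $x_3=0$, which is \emph{orthogonal} to $L$, not a plane through $L$, and at the stage of this corollary $\Sigma^*$ is the full quotient annulus in $\R^3$, so no intersection with $\parc\B^3$ is available; the curve is distinguished purely as the unique planar $v$-curvature line.

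Here is how the paper closes the gap, and your sketch is pointing in the same direction. First, an ambient isometry $T$ preserving $\Sigma^*$ permutes curvature lines but cannot swap the two families, because the $u$-curvature lines are not closed while the $v$-curvature lines are; since $\gamma=\Sigma^*\cap\{x_3=0\}$ is the only planar $v$-curvature line, $T$ preserves $\gamma$. Second, $\gamma$ is a closed, locally convex plane curve with a line of symmetry that is not a circle, so its planar symmetry group is a finite dihedral group $D_{n'}$; the order-$n'$ rotational part forces, by the argument of Lemma \ref{lemper}, $\kappa=m'/n'$ for some $m'\in\Z$, and since $\kappa=m/n$ is irreducible this gives $n'=n$. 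Finally, the only nontrivial isometry of $\R^3$ fixing the plane curve $\gamma$ pointwise is the reflection in $\{x_3=0\}$, which preserves $\Sigma^*$ exactly when $\hat{g}_0=1$ (by \eqref{anginfor} the intersection angle with $x_3=0$ is $\pi/2$ only in that case); this accounts for the presence or absence of the $\Z_2$ factor and shows the symmetry group is no larger than the group you exhibited. Supplying this chain --- foliation rigidity, the dihedral bound on ${\rm Sym}(\gamma)$, and the period identity --- is what your proposal is missing.
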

\begin{proof}
It suffices to show that $\Sigma^*$ is only invariant with respect to the group of isometries of $\R^3$ generated by those listed above. Let $\gamma=\Sigma^*\cap \{x_3=0\}$ be the central planar curvature line of $\Sigma^*$. Since the $u$-curvature lines of $\Sigma^*$ are not closed, and $\gamma$ is the only $v$-curvature line that is planar, we deduce that any isometry of $\R^3$ that leaves $\Sigma^*$ invariant also leaves $\gamma$ invariant. Since $\gamma$ has a line of symmetry in the $x_3=0$ plane and it is not a circle, its (planar) symmetry group is a dihedral group $D_{n'}$. Thus, $n'$ must be a divisor of $n$ and by the argument in Lemma \ref{lemper}, we have that $\kappa=m'/n'$ for some $m'\in \Z$. But since $\kappa=m/n$ irreducible, we obtain $n=n'$ and the result follows.
\end{proof}
In the case $\hat{g}_0=1$, $\Sigma^*$ is invariant with respecto to a prismatic group of order $2n$; see \cite{CSW} for a description of such groups.

In addition, the numerator $m$ of $\kappa=\frac{m}{n}$ describes the number of times that the planar curvature line $\gamma=\Sigma^*\cap \{x_3=0\}$ wraps around. This can be seen as follows. First, note that the Gauss map $g$ of $\Sigma^*$ maps $\gamma\equiv \psi(0,v)$ into the circle $|z|=\hat{g}_0$ of $\C$, and that $g'(iv)\neq 0$ for any $v$, since $\phi(iv)\neq \8$. Thus, $g$ defines a regular covering map of this circle. The value $m$ describes the degree of this map. This implies that $m$ gives the rotation index of the unit normal of the (locally convex) planar curve $\gamma$.

\subsection{The free boundary condition}

We next study the intersection angle of $\Sigma=\Sigma(r_1,r_2;\hat{g}_0)$ with the family of spheres $S(c(u),R(u))$. As a consequence of Proposition \ref{pro:orbits} we have:
\begin{proposition}\label{taucero}
Assume that $(r_1,r_2)\in \Omega_0\subset W$, and that $\hat{g}_0=1$. Then, there exists a value $\tau\in (0,\omega_1)$ such that the surface $\Sigma$ intersects the sphere $S(c(\tau),R(\tau))$ orthogonally along the curvature line $\psi(\tau,v)$.
\end{proposition}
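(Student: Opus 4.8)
The statement asserts that, under the hypotheses $(r_1,r_2)\in\Omega_0$ and $\hat g_0=1$, the surface $\Sigma$ meets one of its spheres $S(c(\tau),R(\tau))$ orthogonally, i.e.\ $\theta(\tau)=\pi/2$ for the intersection angle function. By \eqref{radan}, the intersection angle $\theta(u)$ of $\Sigma$ with $S(u)$ along $\psi(u,v)$ satisfies $\theta=\arctan(2/\beta)$, so orthogonality is equivalent to $\beta(u)=0$. Hence the whole proposition reduces to showing that there is some $\tau\in(0,\omega_1)$ with $\beta(\tau)=0$. The plan is therefore to translate the geometric orthogonality condition into the analytic condition $\beta(\tau)=0$ and then to read off the existence of such a $\tau$ from the phase-space analysis already carried out.

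\textbf{Key steps.} First I would record that, because $\hat g_0=1$, equation \eqref{anginfor} gives $\cos\theta(0)=0$, so $\theta(0)=\pi/2$, which by \eqref{radan} means $\beta(0)=0$; together with $\alfa(0)=0$ this places us exactly in the initial conditions $\alfa(0)=\beta(0)=0$ of Definition \ref{def:gamma0}, so the solution $(\alfa(u),\beta(u))$ attached to $\Sigma$ is precisely the one whose orbit is $\Gamma_0$. Thus the data $(r_1,r_2)\in\Omega_0$ feed directly into Corollary \ref{cor:orbits}. Second, I would invoke Corollary \ref{cor:orbits}: it furnishes a unique $\tau>0$ with $\beta(\tau)=0$ and with $\alfa(u)\ne 0$, $\beta(u)\ne 0$ throughout $(0,\tau]$. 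The condition $\alfa(\tau)\ne 0$ is needed so that the center formula \eqref{gencen} and the radius formula \eqref{radan} apply at $u=\tau$ (these degenerate when $\alfa=0$), guaranteeing that $S(c(\tau),R(\tau))$ is a genuine sphere of finite radius; indeed $\alfa(\tau)\ne 0$ combined with $R^2=(4+\beta^2)/\alfa^2$ and $\beta(\tau)=0$ gives $R(\tau)=2/|\alfa(\tau)|<\infty$. Third, I would verify that this analytically-produced $\tau$ lies in the parameter window $(0,\omega_1)$: here I would use the identification from Section \ref{sec:orbits} between the interval $\mathcal J=(0,L)$ on which $\Gamma_0$ is traversed and the conformal strip $\cU=\{|u|<\omega_1\}$, noting that $\tau$ occurs strictly before $\Gamma_0$ reaches its endpoint (equivalently, strictly before $\alfa$ returns to a zero), so $\tau<\omega_1$. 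Finally, reading backwards through \eqref{radan}, $\beta(\tau)=0$ forces $\theta(\tau)=\arctan(2/0)=\pi/2$, which is exactly orthogonal intersection of $\Sigma$ with $S(c(\tau),R(\tau))$ along $\psi(\tau,v)$.

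\textbf{Main obstacle.} The conceptual content is essentially packaged inside Corollary \ref{cor:orbits}, so the proof is short once the dictionary between the geometric and the dynamical pictures is set up correctly. The step I expect to require the most care is the bookkeeping that matches the abstract $\landa$- and $u$-parametrization of the orbit $\Gamma_0$ (Lemma \ref{orbija} and the discussion following it) with the specific conformal parameter $u\in(-\omega_1,\omega_1)$ of the Weierstrass parametrization \eqref{weipar}: one must confirm that the zero $\tau$ of $\beta$ supplied by Corollary \ref{cor:orbits} indeed corresponds to an interior point of $\cU$ rather than to the boundary $u=\pm\omega_1$ (where $\alfa$ vanishes and the flat ends occur), and that the sphere $S(c(\tau),R(\tau))$ is the one actually carrying the curvature line $\psi(\tau,v)$ of the surface built from these data. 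Verifying $0<\tau<\omega_1$, i.e.\ that $\beta$ attains a zero while $\alfa$ is still nonzero and before the period interval ends, is where the hypothesis $(r_1,r_2)\in\Omega_0$ does its essential work through Proposition \ref{pro:orbits}(2), which guarantees the diagonal crossing $s+t=0$ (equivalently $\alfa\beta$ changing behavior, tied to $\beta$ vanishing) genuinely occurs.
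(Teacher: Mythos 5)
Your reduction of orthogonality to $\beta(\tau)=0$ via \eqref{radan}, your derivation of the initial conditions $\alfa(0)=\beta(0)=0$ from $\hat{g}_0=1$, and your handling of $\tau<\omega_1$ via $\alfa(\omega_1)=0$ all match the paper. But there is a genuine gap at the step where you write that ``the data $(r_1,r_2)\in\Omega_0$ feed directly into Corollary \ref{cor:orbits}.'' Corollary \ref{cor:orbits} is a statement about the polynomial $q(x)$ of \eqref{spo}, whose coefficients are the Hamiltonian invariants $h,k,\delta$ of the solution $(\alfa(u),\beta(u))$ attached to $\Sigma$ through \eqref{om1}. The numbers $(r_1,r_2,r_3)$ in the hypothesis of the proposition are, a priori, only the roots of the \emph{other} cubic $\hat{q}(x)$ of Lemma \ref{odeF}, which governs the ODE satisfied by the Weierstrass datum $\phi$. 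That these two cubics coincide is not definitional; it is the main computational content of the paper's proof. The paper establishes it by evaluating the metric along $u=0$ (where $\hat{g}_0=1$ gives $e^{\omega(0,v)}=-\phi(iv)$), differentiating in $v$ and inserting into \eqref{pos} to obtain $\phi'^2=-\alfa'(0)\phi^3+\delta\phi^2+\beta'(0)\phi+1$, comparing with $\phi'^2=\hat{q}(\phi)$ to read off $\alfa'(0)=1$, $\delta=\hat{q}''(0)/2$, $\beta'(0)=\hat{q}'(0)$, and then computing $h=\beta'(0)$ and $k=1$ from \eqref{def:h}--\eqref{def:k} to conclude $q=\hat{q}$. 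Without this identification you have no control over the roots of the Hamiltonian polynomial $q$, and hence no license to invoke Proposition \ref{pro:orbits} or Corollary \ref{cor:orbits} with the given $(r_1,r_2)$.

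Everything else in your outline is sound and is essentially the paper's route once the identification $q=\hat{q}$ is supplied; in particular your observation that $\alfa(\tau)\neq 0$ guarantees $R(\tau)=2/|\alfa(\tau)|<\infty$ makes explicit something the paper leaves implicit. To repair the proposal you only need to insert the computation above before appealing to Corollary \ref{cor:orbits}.
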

\begin{proof}
Let $(\alfa(u),\beta(u))$ be the solution to the system \eqref{system} associated to $\Sigma$. Since $\Sigma$ intersects orthogonally the $x_3=0$ plane along the curve $u=0$, we have $\alfa(0)=\beta(0)=0$, and so the polynomial $p(u,X)$ in \eqref{def:p} is 
 \begin{equation}\label{ecp}
 p(0,X)=-4\alfa'(0) X^3 - 4\delta X^2 + 4 \beta'(0)X - 4.
 \end{equation}
The metric $e^{2\omega}|dz|^2$ of $\Sigma$ is given in terms of the Weierstrass data $(\phi,g)$ by 
 \begin{equation}\label{eome}
 e^{\omega}= \frac{|\phi|}{2}(|g|+ |g|^{-1}).
 \end{equation} 
In particular, we have $e^{\omega(0,v)} =-\phi(i v),$ since $\phi<0$ and $|g|=1$ along $i\R$. Now, differentiating this equation with respect to $v$ and using \eqref{pos}, we obtain from \eqref{ecp} the differential equation 
\begin{equation}\label{odf}
\phi'(z)^2 = -\alfa'(0) \phi(z)^3 + \delta \phi(z)^2+ \beta'(0) \phi(z) +1.
\end{equation}
But on the other hand, $\phi$ satisfies $\phi'(z)^2 =\hat{q}(\phi(z))$, by Lemma \ref{odeF}. Thus, 
\begin{equation}\label{alfa01}
\alfa'(0)=1, \hspace{0.5cm} \delta =\frac{\hat{q}''(0)}{2}, \hspace{0.5cm} \beta'(0)=\hat{q}'(0),
\end{equation} 
since $\phi$ is not constant. By \eqref{def:h} and \eqref{def:k}, the constants $h,k$ associated to $(\alfa(u),\beta(u))$ are $h=\beta'(0)$ and $k=1$. This implies from \eqref{alfa01} that $q(x)=\hat{q}(x)$, where $q(x)$ is the polynomial constructed from $h,k,\delta$ in \eqref{spo}.

In particular, we can apply the analytic study of Section \ref{sec:orbits} to our geometric situation. By our present hypotheses on $\hat{q}(x)$, the polynomial $q(x)=\hat{q}(x)$ associated to $\Sigma$ is in the conditions of Proposition \ref{pro:orbits}. In this way, by Corollary \ref{cor:orbits}, there exists a value $\tau>0$ where $\beta(\tau)=0$, and both $\alfa(u)\neq 0$ and $\beta(u)\neq 0$ hold for every $u\in (0,\tau)$. In particular, $\tau\in (0,\omega_1)$, since $\alfa(\omega_1)=0$. This proves the result.
\end{proof}
\begin{definition}\label{sita}
Assume that $(r_1,r_2)\in \Omega_0\subset W$, and that $\hat{g}_0=1$. We denote by $\Sigma_{\tau}$ the closed subset of $\Sigma$ parametrized by $\psi(u,v)$ as in \eqref{weipar}, with $(u,v)\in [-\tau,\tau]\times \R$, where $\tau\in (0,\omega_1)$ is the number in Proposition \ref{taucero}. 

Note that, by construction, $\tau$ is the smallest number in $(0,\omega_1)$ at which $\beta=0$.
\end{definition}
The boundary of $\Sigma_{\tau}$ consists of two curves that intersect orthogonally two spheres $S(c(\tau),R(\tau))$ and $S(c(-\tau),R(\tau))$. By the symmetry of $\Sigma_{\tau}$ with respect to the $x_3=0$ plane along $u=0$, these spheres have the same radius $R(\tau)>0$, and their centers $c(\tau),c(-\tau)\in \R^3$ are symmetric with respect to the $x_3=0$ plane.

\begin{theorem}\label{altufor}
The height $c_3(\tau)$ of the center of the sphere $S(c(\tau),R(\tau))$ is
\begin{equation}\label{centro}
c_3(\tau)=b \tau + 4 \zeta(\tau) + \frac{2\wp'(\tau)}{\wp(\tau)-e_1} - \frac{(1+g(\tau)^2) \, \phi(\tau)^2}{(1+g(\tau)^2)\phi'(\tau) + g(\tau)^2 -1} . 
\end{equation}
\end{theorem}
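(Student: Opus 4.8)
The plan is to compute the height $c_3(\tau)$ by specializing the general formula \eqref{gencen} for the center $c(u)$ to the value $u=\tau$, taking advantage of the fact that at $u=\tau$ we have $\beta(\tau)=0$. First I would recall that by \eqref{gencen}, the center is
\begin{equation*}
c(\tau)=\psi(\tau,v)-\frac{2}{\alfa(\tau)}\frac{\psi_u(\tau,v)}{|\psi_u(\tau,v)|}-\frac{\beta(\tau)}{\alfa(\tau)}N(\tau,v),
\end{equation*}
and since $\beta(\tau)=0$ the last term drops out, leaving only a contribution from $\psi$ and from the unit tangent $\psi_u/|\psi_u|$ in the $u$-direction. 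Because all the centers lie on the vertical line $L$, only the third coordinate is nontrivial to track, and the expression should be independent of $v$; so I would evaluate everything along $v=0$, i.e. along the real axis where $\phi$ and $g$ are real-valued, which simplifies the Weierstrass data considerably.

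The second step is to translate each term of the center formula into Weierstrass data via \eqref{weidata}, \eqref{weipar}. The third coordinate of $\psi(\tau,0)$ is $2\,\mathrm{Re}\int_0^\tau \phi(z)\,dz$ with $\phi(z)=b-4\wp(z+\omega_1)$; integrating using $\zeta'=-\wp$ gives the first three terms $b\tau+4\zeta(\tau)+\frac{2\wp'(\tau)}{\wp(\tau)-e_1}$, where the $\zeta$-term comes from the antiderivative of $\wp(z+\omega_1)$ and the rational $\wp'/(\wp-e_1)$ term comes from re-expressing $\zeta(z+\omega_1)$ via the addition formula \eqref{zetarel} and the half-period values $\wp(\omega_1)=e_1$, $\wp'(\omega_1)=0$. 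For the tangent term I would use that $\psi_u=2\,\mathrm{Re}\,\Phi$ and $|\psi_u|=e^{\omega}$, where by \eqref{eome} along the real axis $e^\omega=\tfrac{|\phi|}{2}(|g|+|g|^{-1})$; I also need $\alfa(\tau)$, which I can recover from the relations of Section \ref{sec:prelim} together with \eqref{change}, expressing $\alfa$ in terms of $\phi$ and its derivative. The third coordinate of $\Phi$ is $\phi$, so the tangent contribution reduces to $-\tfrac{2}{\alfa(\tau)}\cdot\frac{\phi(\tau)}{e^{\omega(\tau)}}$, and after substituting $e^\omega$ and the value of $\alfa(\tau)$ this should collapse into the final rational expression $-\frac{(1+g(\tau)^2)\phi(\tau)^2}{(1+g(\tau)^2)\phi'(\tau)+g(\tau)^2-1}$.

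The main obstacle I expect is the bookkeeping in the tangent term: correctly identifying $\alfa(\tau)$ and the sign of $\alfa'(\tau)$ from the $(\alfa,\beta)$-to-Weierstrass dictionary, and verifying that the orientation of the unit tangent $\psi_u/|\psi_u|$ matches the sign convention in \eqref{gencen}. In particular, I would need to express $\alfa$ in terms of $\phi,\phi',g$ carefully, using \eqref{def:p}, \eqref{pos} and the relations $\alfa'(0)=1$, $\beta(\tau)=0$ established in the proof of Proposition \ref{taucero}; the appearance of $g(\tau)^2$ in the final denominator strongly suggests that both $\phi'(\tau)$ (from differentiating $\phi$, related to $\alfa$) and the normalization of the Gauss map enter through the metric factor $e^\omega$. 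Once $\alfa(\tau)$ is pinned down, the remaining work is the algebraic simplification of $-\tfrac{2}{\alfa(\tau)}\cdot\frac{\phi(\tau)}{e^{\omega(\tau)}}$ into the stated quotient, which is routine though delicate. I would finish by checking that the $v$-independence is automatic (since $c(\tau)$ is a genuine center, not depending on which point of the curvature line we use), and that the formula \eqref{centro} agrees with the known height in the degenerate or critical-catenoid limit as a consistency test.
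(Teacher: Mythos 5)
Your proposal is correct and follows essentially the same route as the paper: specialize \eqref{gencen} to $u=\tau$ along $v=0$, integrate $\phi=b-4\wp(z+\omega_1)$ via $\zeta$ and the addition formula to get the first three terms, and reduce the tangent contribution to $-\tfrac{2}{\alfa(\tau)}\phi(\tau)e^{-\omega(\tau,0)}$ using \eqref{eome2}. The one detail you leave slightly open --- pinning down $\alfa(\tau)$ --- is resolved in the paper exactly as you suspect, by evaluating \eqref{om1} at $v=0$ with $\beta(\tau)=0$ and substituting $\omega_u(\tau,0)=(\phi'(\tau)+N_3(\tau))/\phi(\tau)$ from \eqref{centro5}, after which the algebra collapses to the stated quotient.
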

\begin{proof}
For any fixed $u\in (0,\omega_1)$, the center $c(u)$ of the sphere $S(c(u),R(u))$ where $\psi(u,v)$ lies is given by \eqref{gencen}. As $\phi<0$ and $g>0$ in $(0,\omega_1)$, it holds by \eqref{eome} 
\begin{equation}\label{eome2}
e^{\omega}=\frac{-\phi (1+g^2)}{2g}
\end{equation} along that interval. By differentiating with respect to $u$ this expression, we find 
 \begin{equation}\label{centro5}
\omega_u (u,0)= \frac{\phi'(u)+N_3(u)}{\phi(u)}, \hspace{1cm} N_3(u):=\frac{g(u)^2-1}{g(u)^2+1}.
\end{equation} 
Thus, noting that $(\psi_3)_u(u,0)=\phi(u)$ by \eqref{weidata} and \eqref{weipar}, we have the expression for the third coordinate $c_3(u)$ of \eqref{gencen}:
\begin{equation}\label{3corn}
c_3(u)= \int_0^{u} \phi(\nu) d\nu+ \frac{4}{\alfa(u)}\left( \frac{g(u)}{1+g(u)^2}\right) - \frac{\beta(u)}{\alfa(u)} \left(\frac{g(u)^2-1}{g(u)^2+1}\right).
\end{equation}
Now, from \eqref{defF},
\begin{equation}\label{33corn} \int_0^{u} \phi(\nu) d\nu= b  u + 4(\zeta(u+\omega_1)-\zeta(\omega_1)) = b  u + 4 \zeta(u) +  \frac{2\wp'(u)}{\wp(u)-e_1},
\end{equation}
 where for the second equality we have used the identity 
$$\zeta(z_1+z_2)-\zeta(z_1)-\zeta(z_2)=\frac{1}{2} \cdot \frac{\wp'(z_1)-\wp'(z_2)}{\wp(z_1)-\wp(z_2)}$$ applied to $z_1=u$, $z_2=\omega_1$.

We now take $u=\tau$. Since $\beta(\tau)=0$, we can obtain the value of $\alfa(\tau)$ in terms of $g, \phi$ from \eqref{centro5} and \eqref{om1}. Using this, we have from \eqref{3corn} and \eqref{33corn} that
\begin{equation}\label{heta}
\def\arraystretch{2.8} \begin{array}{lll} c_3(\tau) & =& \displaystyle \int_0^{\tau} \phi(\nu) d\nu - \frac{(1+g(\tau)^2) \, \phi(\tau)^2}{(1+g(\tau)^2)\phi'(\tau) + g(\tau)^2 -1}\\ & = & \displaystyle b \tau + 4 \zeta(\tau) + \frac{2\wp'(\tau)}{\wp(\tau)-e_1} - \frac{(1+g(\tau)^2) \, \phi(\tau)^2}{(1+g(\tau)^2)\phi'(\tau) + g(\tau)^2 -1} . 
\end{array}
\end{equation}
This completes the proof.
\end{proof}

\begin{remark}\label{rem:fb}
If $c_3(\tau)=0$, $\Sigma_{\tau}$ is a free boundary minimal strip in the sphere $S(c(\tau),R(\tau))$, possibly with a non-compact image (the period might not close, since $\kappa$ in \eqref{deper} can be irrational).
\end{remark}

The next proposition follows quite directly from Wente \cite{W}:

\begin{proposition}\label{monocen}
The height $c_3(u)$ of the center of the spheres $S(c(u),R(u))$ defines a strictly decreasing diffeomorphism from $(0,\omega_1)$ to $(-\8,\8)$.
\end{proposition}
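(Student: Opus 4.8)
The statement to prove is that $c_3(u)$, given explicitly for $u \in (0,\omega_1)$ by the formula in \eqref{3corn} (or its specialization in \eqref{33corn} together with the correction terms), is a strictly decreasing diffeomorphism onto $(-\infty,\infty)$. My plan is to establish three things: smoothness and regularity (so $c_3$ is a local diffeomorphism), strict monotonicity (the derivative has a fixed sign), and the correct boundary behavior as $u \to 0^+$ and $u \to \omega_1^-$ (so the image is all of $\R$).

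\textbf{Step 1: the derivative.} The cleanest route is to differentiate the defining geometric relation rather than the closed formula. Recall that all centers $c(u)$ lie on the common vertical line $L$ (Lemma \ref{lem:sigen}(2)), so $c_3(u)$ is essentially the arclength parameter of $L$. Wente's description in \cite{W} gives a formula for $c_3'(u)$ directly in terms of the data; I would differentiate \eqref{3corn} using \eqref{centro5} and \eqref{om1} to eliminate $\alfa,\beta$ in favor of $\phi,g$. The key simplification is that the terms involving $\alfa(u),\beta(u)$ are governed by the same Hamiltonian system, so after substituting $2\omega_u = \alfa e^\omega + \beta e^{-\omega}$ and using $\phi'^2 = \hat q(\phi)$ from Lemma \ref{odeF}, the expression for $c_3'(u)$ should collapse to something manifestly signed along $(0,\omega_1)$.

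\textbf{Step 2: fixing the sign.} On $(0,\omega_1)$ we have $\phi(u) < 0$ (from the values of $\phi$ along $i\R$ computed in Section \ref{subsec:wei}, extended by the analysis; more precisely $\phi$ takes values in $[r_1,r_2]\subset(-\infty,0)$ and $r_1<r_2<0$), and $g(u) > 0$. These sign conditions are exactly what I expect to force $c_3'(u) < 0$ throughout the interval, giving strict monotonicity and hence injectivity; combined with the regularity of the Weierstrass data on the open strip $\cU$ (where $\phi$ has no poles), this makes $c_3$ a diffeomorphism onto its image.

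\textbf{Step 3: the limits.} To get that the image is all of $\R$, I would examine the two endpoints. As $u \to \omega_1^-$ we approach the boundary of the fundamental strip where $\phi$ develops a pole (the flat end at the lattice $\mathcal{Z}=\Lambda-\omega_1$, corresponding to $R=\infty$ and an unbounded planar curvature line); here the terms $4\zeta(u) + 2\wp'(u)/(\wp(u)-e_1)$ in \eqref{33corn} blow up, driving $c_3(u) \to \pm\infty$. The sign of monotonicity then pins down which endpoint gives $+\infty$ and which gives $-\infty$. The \textbf{main obstacle} I anticipate is Step 3: one must verify that the boundary divergence is genuine and not cancelled by the last (rational-in-$g,\phi$) term of \eqref{centro}, i.e. that no finite limit is reached. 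This requires a careful asymptotic analysis of $\zeta$, $\wp'$, and $g$ near $u = \omega_1$ (and near $u=0$, where by the $x_3=0$ symmetry one expects $c_3(0)=0$), controlling the competing singular contributions. Since the underlying surface is a genuine catenoid-type object with an end, I expect the net divergence to survive, but making this rigorous — rather than asserting it from Wente's qualitative picture — is where the real work lies.
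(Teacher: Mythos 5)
Your overall strategy (differentiate the geometric relation via Wente's formula for $c'(u)$, fix the sign, then examine the endpoints) points in the right direction, but Step 3 contains a genuine error that would sink the proof. You assert that near $u=0$, ``by the $x_3=0$ symmetry one expects $c_3(0)=0$''. This is false, and it is incompatible with the very statement you are proving: if $c_3$ extended continuously to $u=0$ with value $0$ and were strictly decreasing on $(0,\omega_1)$, its image would be contained in $(-\8,0)$, not all of $\R$. At $u=0$ the curvature line $\psi(0,v)$ is \emph{planar} ($\alfa(0)=0$, hence $R=\8$ by \eqref{radan}), so the ``sphere'' degenerates to the plane $x_3=0$ and its center escapes to infinity; from \eqref{3corn} with $\hat{g}_0=1$ the dominant term is $2/\alfa(u)\to+\8$ as $u\to 0^+$. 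The odd symmetry $c_3(-u)=-c_3(u)$ is consistent with $c_3(u)\to\pm\8$ as $u\to 0^{\mp}$, not with a finite value at $u=0$.

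The ingredient you are missing --- and which renders both your Step 2 and the ``main obstacle'' you flag in Step 3 essentially automatic --- is Wente's exact identity for the speed of the center: writing $c'(u)=\alfa^{-2}\mathbf{F}$ with $\mathbf{F}=(0,0,F_3(u))$ (the centers lie on a vertical line), one has $F_3(u)^2=4k$, where $k$ is the Hamiltonian constant of \eqref{def:k}; here $k=1$, so $c_3'(u)^2=4/\alfa(u)^2$ exactly. A single sign evaluation (as $u\to 0^+$ one finds $A=2$, $B=C=0$ in Wente's formula, and the third coordinate of $\psi_u(0,0)$ equals $\phi(0)<0$) then yields $c_3'(u)=-2/|\alfa(u)|<0$ on all of $(0,\omega_1)$. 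Surjectivity onto $\R$ follows with no asymptotic analysis of $\zeta$, $\wp'$ or of possible cancellations in \eqref{centro}: $\alfa$ vanishes to first order at $u=0$ and at $u=\omega_1$ (where $\phi$ has a pole), so $\int 2/|\alfa|$ diverges at both endpoints, forcing $c_3(u)\to+\8$ as $u\to 0^+$ and $c_3(u)\to-\8$ as $u\to\omega_1^-$.
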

\begin{proof}
By equation (4.14) of \cite{W}, we have 
\begin{equation}\label{wentecenter}
c'(u)= \frac{1}{\alfa^2} (A e^{-\omega} \psi_u + B e^{-\omega}\psi_v + C N) =: \frac{1}{\alfa^2} {\bf F}
\end{equation}
where $$A=2 \alfa' + \alfa^2 e^{\omega} - \alfa \beta e^{-\omega},\hspace{0.5cm} B= 2\alfa \omega_v, \hspace{0.5cm} C= 2\alfa e^{-\omega} + \alfa'\beta- \beta'\alfa,$$ and ${\bf F}= {\bf F}(u)=(0,0,F_3(u))$, since all the centers $c(u)$ lie in the same vertical line. Moreover, also from \cite{W}, 
\begin{equation}\label{wence}
\esiz {\bf F},{\bf F}\esde = F_3(u)^2 = 4k,
\end{equation} 
where $k\in \R$ is the Hamiltonian constant in \eqref{def:k}. In our case, $k=1$, and so we have from \eqref{wentecenter} 
\begin{equation}\label{c3u}
c_3'(u)^2 = \frac{4}{\alfa(u)^2}.
\end{equation} In order to check the sign of $c'(u)$, we note that at $u=0$ we have $\alfa(0)=0$ and $\alfa'(0)=1$. This shows that $A=2$ and $B=C=0$ at $(u,v)=(0,0)$. Now, since the third coordinate of $\psi_u(0,0)$ is $\phi(0)<0$, we deduce from \eqref{wentecenter} that $c_3'(u)<0$.

From here, we have since $\alfa(0)=0$, that $c_3(u)\to \8$ as $u\to 0$. Similarily, since $\alfa(\omega_1)=0$ (recall that $\phi$ has a pole at $\omega_1$), we have $c_3(u)\to -\8$ as $u\to \omega_1$. This completes the proof.
\end{proof}

It is important to observe that, for any $u\in (0,\omega_1)$, all the fundamental quantities $\alfa(u),\beta(u),c_3(u)$ can be explicitly computed in terms of the Weierstrass elliptic functions $\wp,\zeta,\sigma$. We explain this next.

Fix $u\in (0,\omega_1)$, and consider $v_0\neq 0$ such that $\omega(u,0)\neq \omega(u,v_0)$.  Then, by \eqref{om1}, 
\begin{equation}\label{sisome}
\left\{\def\arraystretch{1.7} \begin{array}{lll} 2\omega_u (u,0) &=& \alfa(u) e^{\omega(u,0)} + \beta(u) e^{-\omega(u,0)} \\ 2\omega_u (u,v_0) &=& \alfa(u) e^{\omega(u,v_0)} + \beta(u) e^{-\omega(u,v_0)}\end{array}. \right.
\end{equation}
Thus,
\begin{equation}\label{albe0}
\alfa (u)= \frac{2(\omega_u(u,0) e^{\omega(u,0)} - \omega_u(u,v_0) e^{\omega (u,v_0)})}{e^{2\omega(u,0)}-e^{2\omega(u,v_0)}}
\end{equation}
and 
\begin{equation}\label{albe}
\beta (u)= \frac{2(\omega_u(u,0) e^{-\omega(u,0)} - \omega_u(u,v_0) e^{-\omega (u,v_0)})}{e^{-2\omega(u,0)}-e^{-2\omega(u,v_0)}}.
\end{equation}
Now, the function $\omega(u,v)$ is explicitly given by \eqref{eome} in terms of $\phi,g$, which are themselves given in terms of $\wp,\zeta,\sigma$ by \eqref{Falt} and \eqref{w80}. So, we obtain an explicit expression for $\alfa,\beta$ in terms of these elliptic functions. Also, since the function $c_3(u)$ is given by \eqref{3corn} and \eqref{33corn}, the same is true for it. This proves our claim.

\section{The period map}\label{sec:period}

We now vary the values $(r_1,r_2)\in W$ that we fixed at the start of Section \ref{sec:wei}. Below, we follow the notations in that section.
\begin{definition}
We define the \emph{period map} ${\rm Per}(r_1,r_2):W\flecha (0,1)$ as 
\begin{equation}\label{permap}
{\rm Per}(r_1,r_2)=\kappa=\frac{-1}{2\pi i} \int_0^{2\omega_2} \frac{d\nu}{\phi(\nu)}, 
\end{equation}
see \eqref{deper}. By construction, ${\rm Per}(r_1,r_2)$ depends analytically on $(r_1,r_2)$.
\end{definition}

Our next objective is to control the level sets of the map ${\rm Per}$. We first prove:

\begin{theorem}\label{th:cerouno}
For any $(r_1,r_2)\in W$ we have 
\begin{equation}\label{peres0}
\frac{1}{\sqrt{1-r_1^2 r_2}} \leq {\rm Per} (r_1,r_2) \leq \frac{1}{\sqrt{1-r_1 r_2^2}}.
\end{equation} 
In particular, ${\rm Per}(r_1,r_2)\in (0,1)$, and ${\rm Per}(r_1,r_2)$ extends $C^1$-smoothly to the diagonal $\cD:=\{(r,r):r<0\}$, with values 
 \begin{equation}\label{per:boundary}
 {\rm Per} (r,r)= \frac{1}{\sqrt{1-r^3}}.
 \end{equation} 
\end{theorem}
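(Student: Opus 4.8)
The plan is to convert the contour integral defining $\kappa$ in \eqref{permap} into a real integral over $[r_1,r_2]$ and then estimate it by elementary means. First I would parametrize the segment $[0,2\omega_2]\subset i\R$ by $\nu=iv$, so that the factor $i$ from $d\nu$ cancels the $i$ in the denominator of \eqref{permap} and
\[
\kappa=\frac{-1}{2\pi}\int_0^{2\,{\rm Im}(\omega_2)}\frac{dv}{\phi(iv)}.
\]
Along $i\R$ the function $\phi$ takes values in $[r_1,r_2]$, increasing from $\phi(0)=r_1$ to $\phi(\omega_2)=r_2$ on the first half-period and symmetrically decreasing back on the second, so the two halves contribute equally. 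On each half I would use the change of variable $x=\phi(iv)$: since $\frac{dx}{dv}=i\phi'(iv)$, the equation $\phi'^2=\hat q(\phi)$ of Lemma \ref{odeF} gives $\left(\frac{dx}{dv}\right)^2=-\hat q(x)=(x-r_1)(r_2-x)(r_3-x)$, which is nonnegative precisely on $[r_1,r_2]$. Tracking the orientation and using $x<0$ (so $-1/x=1/|x|$), this yields
\begin{equation}\label{plan:real}
\kappa=\frac{1}{\pi}\int_{r_1}^{r_2}\frac{dx}{|x|\,\sqrt{(x-r_1)(r_2-x)(r_3-x)}}.
\end{equation}

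Next I would obtain the two inequalities by isolating the factor $(r_3-x)^{-1/2}$. On $[r_1,r_2]$ the map $x\mapsto (r_3-x)^{-1/2}$ is increasing, hence bounded between $(r_3-r_1)^{-1/2}$ and $(r_3-r_2)^{-1/2}$; pulling these constants outside \eqref{plan:real} reduces the estimate to the exact evaluation
\begin{equation}
\int_{r_1}^{r_2}\frac{dx}{|x|\,\sqrt{(x-r_1)(r_2-x)}}=\frac{\pi}{\sqrt{r_1r_2}}=\pi\sqrt{r_3},
\end{equation}
which is the classical integral $\int_a^b \frac{dx}{(c-x)\sqrt{(x-a)(b-x)}}=\frac{\pi}{\sqrt{(c-a)(c-b)}}$ at $a=r_1$, $b=r_2$, $c=0>r_2$, combined with $r_1r_2r_3=1$. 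Substituting gives $\sqrt{r_3/(r_3-r_1)}\le \kappa\le \sqrt{r_3/(r_3-r_2)}$, and since $r_1/r_3=r_1^2r_2$ and $r_2/r_3=r_1r_2^2$ this is exactly \eqref{peres0}. As $r_1^2r_2<0$ and $r_1r_2^2<0$, both bounds lie strictly in $(0,1)$, so $\kappa\in(0,1)$.

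Finally, for the behaviour at the diagonal $\cD$ I would rescale to a fixed interval by setting $x=\frac{r_1+r_2}{2}+\frac{r_2-r_1}{2}\,y$ with $y\in[-1,1]$; the factor $\frac{r_2-r_1}{2}$ cancels between $dx$ and $\sqrt{(x-r_1)(r_2-x)}=\frac{r_2-r_1}{2}\sqrt{1-y^2}$, giving
\begin{equation}
\kappa=\frac{1}{\pi}\int_{-1}^{1}\frac{dy}{|x(y)|\,\sqrt{r_3-x(y)}\,\sqrt{1-y^2}},\qquad x(y)=\frac{r_1+r_2}{2}+\frac{r_2-r_1}{2}\,y.
\end{equation}
On a neighbourhood of a point of $\cD$, the value $x(y)$ stays in a compact subset of $(-\infty,0)$ while $r_3=1/(r_1r_2)$ stays positive and bounded, so the integrand and its $(r_1,r_2)$-derivatives are dominated, locally uniformly in the parameters, by a constant multiple of the integrable function $(1-y^2)^{-1/2}$. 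Differentiation under the integral sign then shows that $\kappa$ extends $C^1$-smoothly (in fact real-analytically) across $\cD$, and on the diagonal $x(y)\equiv r$ gives $\kappa=\frac{1}{|r|\sqrt{r_3-r}}=\frac{1}{\sqrt{1-r^3}}$ after using $r_3=1/r^2$, matching the common value of the two bounds in \eqref{peres0}. I expect the main obstacle to be the bookkeeping in the first step: verifying the monotonicity of $\phi$ along $i\R$, arranging the orientation so that the two half-periods add rather than cancel, and fixing the global sign so that $\kappa>0$. Once the representation \eqref{plan:real} is in place, the estimates and the $C^1$-extension are routine.
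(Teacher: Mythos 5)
Your proof is correct, and for the main inequality \eqref{peres0} it follows the same route as the paper: reduce the contour integral to $\frac{-1}{\pi}\int_{r_1}^{r_2}\frac{dt}{t\sqrt{-q(t)}}$ via the substitution $t=\phi(\nu)$ on each half of $[0,2\omega_2]$ (the paper records exactly the monotonicity of $\phi$ on the two half-segments that you flag as the bookkeeping risk), and then bound the factor $(r_3-t)^{-1/2}$ --- which, after using $r_1r_2r_3=1$, is the paper's factor $(1-r_1r_2t)^{-1/2}$ up to the constant $\sqrt{r_3}$ --- by its endpoint values, leaving an integral that is evaluated in closed form (the paper via an explicit $\arctan$ primitive, you via the equivalent classical formula $\int_a^b\frac{dx}{(c-x)\sqrt{(x-a)(b-x)}}=\pi/\sqrt{(c-a)(c-b)}$). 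Where you genuinely diverge is the extension to the diagonal $\cD$. The paper rescales by the affine map $t=r_1+(r_2-r_1)s$ anchored at $r_1$, then differentiates the resulting integrand in $r_1$ and $r_2$ and computes the limits of both partial derivatives explicitly (both tend to $3r^2/(4(1-r^3)^{3/2})$), concluding $C^1$-extendability; continuity on $\cD$ is read off from the squeeze \eqref{peres0}. You instead use the midpoint substitution $x=\frac{r_1+r_2}{2}+\frac{r_2-r_1}{2}y$, which makes the integrand $\frac{1}{|x(y)|\sqrt{r_3-x(y)}}\cdot(1-y^2)^{-1/2}$ jointly real-analytic in $(r_1,r_2,y)$ on a neighbourhood of the diagonal (since $x(y)$ stays in a compact subset of $(-\8,0)$ and $r_3-x(y)$ is bounded away from $0$), so differentiation under the integral sign gives a real-analytic extension across $\cD$ with no computation, and the diagonal value $1/(|r|\sqrt{r_3-r})=1/\sqrt{1-r^3}$ drops out. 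Your version is cleaner and yields more (analyticity rather than just $C^1$); the paper's version has the minor side benefit of producing the explicit value of the gradient of ${\rm Per}$ on $\cD$, which it then uses to note that the level curves of ${\rm Per}$ leave the diagonal orthogonally.
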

\begin{proof}
Recall that the function $\phi(z)$ in \eqref{defF} is real on $i\R$, with $\phi(0)=\phi(2\omega_2)=r_1$ and $\phi(\omega_2)=r_2$. Moreover, $\phi$ is a strictly increasing (resp. decreasing) diffeomorphism from the segment $[0,\omega_2]$ (resp. $[\omega_2,2\omega_2]$) into $[r_1,r_2]$. Then, we can make in the integral expression of ${\rm Per}$ in \eqref{permap} the change of variable $t=\phi(\nu)$ on each of these two segments, and we have by Lemma \ref{odeF}
\begin{equation}\label{perdire}\def\arraystretch{2.9}\begin{array}{lll} 
{\rm Per}(r_1,r_2) & =  &\displaystyle \frac{-1}{2\pi i} \left\{ \int_0^{\omega_2} \frac{1}{\phi(\nu)} d\nu + \int_{\omega_2}^{2\omega_2} \frac{1}{\phi(\nu)} 
d\nu \right\} \\
& = & \displaystyle \frac{-1}{\pi} \int_{r_1}^{r_2} \frac{1}{t\sqrt{-q(t)}} dt,
\end{array}
\end{equation}
where $q(x)=\hat{q}(x)$ is given by \eqref{spo2}. Writing $t=r_1 + (r_2-r_1)s$ we obtain the more convenient expression
\begin{equation}\label{perotra}
{\rm Per}(r_1,r_2)=\int_0^1 Q(s;r_1,r_2)ds, 
\end{equation}
where
\begin{equation}\label{Qper}
\def\arraystretch{2.5}\begin{array}{lll} Q(s;r_1,r_2) & =&  \displaystyle \frac{r_1-r_2}{\pi(r_1+(r_2-r_1)s)\, \sqrt{-q(r_1+(r_2-r_1)s)}} \\ 
& = & \displaystyle \frac{-\sqrt{r_1 r_2}}{\pi \sqrt{s (1-s)} (r_1+(r_2-r_1)s)\, \sqrt{1- r_1 r_2((r_1+(r_2-r_1)s)}}.
\end{array}
\end{equation}
The function $Q(s,r_1,r_2)$ is positive. Also, 
\begin{equation}\label{peres1}
\frac{1}{\sqrt{1-r_1^2 r_2}} \leq \frac{1}{ \sqrt{1- r_1 r_2(r_1+(r_2-r_1)s)}}\leq \frac{1}{\sqrt{1-r_2^2 r_1}},
\end{equation}
since it can be easily checked that the function in the middle of these inequalities is increasing. So, by \eqref{perotra}, \eqref{Qper} and the first inequality in \eqref{peres1},
\begin{equation}\label{peres11}{\rm Per}(r_1,r_2)\geq \frac{1}{\pi \sqrt{1-r_1^2 r_2}} \int_0^1 \frac{-\sqrt{r_1 r_2}}{\sqrt{s (1-s)} (r_1+(r_2-r_1)s)} \, ds.
\end{equation} The function $$\varphi(s):= 2 \,{\rm arctan}\left(\sqrt{\frac{r_2 s}{r_1(1-s)}}\right)$$ is a primitive of the above integral. Thus, from \eqref{peres11}, we obtain the left inequality of expression \eqref{peres0}. Operating in the same way using the second inequality of \eqref{peres1}, we obtain \eqref{peres0}.

From \eqref{peres0} we see directly that, as $(r_1,r_2)\to (r,r)$, 
$${\rm Per}(r_1,r_2) \longrightarrow  \frac{1}{\sqrt{1-r^3}}.$$ This proves the continuous extension of ${\rm Per}(r_1,r_2)$ to the diagonal $\cD$, as well as \eqref{per:boundary}. In particular, ${\rm Per}(r,r)$ is an increasing diffeomorphism from $(-\8,0)$ to $(0,1)$.

Finally, by differentiating \eqref{Qper} with respect to $r_1$, we have that as $(r_1,r_2)\to (r,r)$,
$$\frac{\parc {\rm Per}(r_1,r_2)}{\parc r_1} \longrightarrow -\frac{1}{\pi}\int_0^1 \frac{1-3r^3+s (3 r^3 -2)}{2 r (1-r^3)^{3/2} \sqrt{s (s-1)}} \, ds = \frac{3 r^2}{4 (1-r^3)^{3/2}}.$$ By an analogous computation, the derivative of ${\rm Per}(r_1,r_2)$ with respect to $r_2$ converges to the same value as $(r_1,r_2)\to (r,r)$. Thus, ${\rm Per}(r_1,r_2)$ extends $C^1$ to the diagonal $\cD$. 

Note that, by the implicit function theorem, from each $(r,r)\in \cD$ starts a unique regular, real analytic level curve of ${\rm Per}$, that intersects $\cD$ orthogonally at $(r,r)$.
\end{proof}

\begin{figure}[h]
\begin{center}
\includegraphics[height=6.2cm]{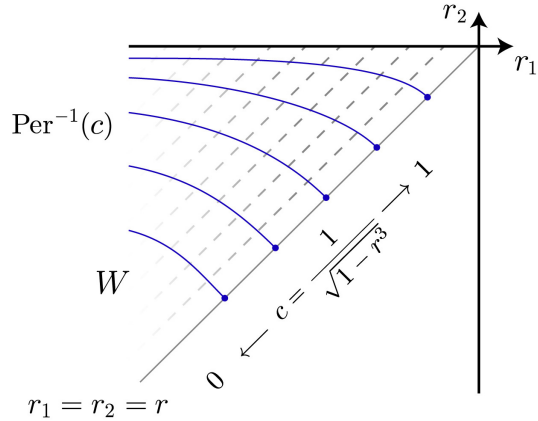} 
 \caption{Level sets of the period map ${\rm Per}(r_1,r_2)$.}\label{fig:fig2}
\end{center}
\end{figure}

\begin{theorem}\label{th:peri}
 For any $c\in (0,1)$, the level set ${\rm Per}^{-1}(c)$ is a connected, regular, real analytic curve in $W$ that meets $\cD$ orthogonally at the point $(r_c,r_c)$, where $r_c=\sqrt[3]{1-1/c^2}$.
 
Moreover, 
\begin{equation}\label{deriperr}
\frac{\parc {\rm Per}(r_1,r_2)}{\parc(r_1+r_2)} >0
\end{equation}
holds for every $(r_1,r_2)\in W$. In particular, the level curves of ${\rm Per}$ intersect transversely the lines $r_1-r_2 = {\rm const}$, and ${\rm Per}(r_1,r_2)$ is strictly increasing along them.
\end{theorem}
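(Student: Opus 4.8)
The plan is to establish the monotonicity inequality \eqref{deriperr} and then deduce the connectedness, regularity, and orthogonality statements as consequences. The key observation is that, by the already-established formula \eqref{perotra}--\eqref{Qper}, the period map is an integral of an explicit positive integrand $Q(s;r_1,r_2)$, so the derivative along the direction $r_1+r_2$ can be computed by differentiating under the integral sign. I would introduce coordinates adapted to the symmetry of $Q$ in $(r_1,r_2)$: set $\sigma=r_1+r_2$ and $\delta=r_1-r_2$, so that $\delta$ is held fixed along the lines $r_1-r_2=\mathrm{const}$. The goal is then to show $\partial_\sigma \int_0^1 Q\,ds>0$ on all of $W$.

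First I would record the explicit dependence of the radicand $1-r_1 r_2\,(r_1+(r_2-r_1)s)$ and the linear factor $r_1+(r_2-r_1)s$ in \eqref{Qper} on $\sigma$ and $\delta$, and compute $\partial_\sigma Q$ pointwise in $s$. The natural strategy is to exploit the change of variables $t=r_1+(r_2-r_1)s\in[r_1,r_2]$ used to pass from \eqref{perdire} to \eqref{perotra}; in the $t$-variable the integrand is $\frac{-1}{\pi}\,\frac{1}{t\sqrt{-q(t)}}$, where only the cubic $q(t)=-(t-r_1)(t-r_2)(t-r_3)$ carries the parameter dependence, and $r_3=1/(r_1 r_2)$. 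Differentiating $-q(t)$ with respect to $\sigma$ at fixed $\delta$ gives a quantity whose sign I must control uniformly for $t\in[r_1,r_2]$. The cleanest route is to show that increasing $\sigma$ (keeping $\delta$ fixed) decreases $-q(t)$ throughout the interval $[r_1,r_2]$, since $-q(t)>0$ there and the integrand is proportional to $(-q(t))^{-1/2}$; a decrease in the radicand increases each integrand value, hence increases the integral. This reduces \eqref{deriperr} to a sign computation on the cubic that should follow from $r_1<r_2<0<r_3$ together with the constraint $r_1 r_2 r_3=1$ governing how $r_3$ varies with $\sigma,\delta$.

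For the structural claims I would argue as follows. By \eqref{deriperr}, $\mathrm{Per}$ is strictly monotone along every line $r_1-r_2=\mathrm{const}$ inside $W$; combined with the range computation of Theorem \ref{th:cerouno}, namely that along such a line the period sweeps through an interval containing $c$, the intermediate value theorem yields that each such line meets ${\rm Per}^{-1}(c)$ in exactly one point. This simultaneously gives that ${\rm Per}^{-1}(c)$ is a graph over the $\delta$-axis, hence connected, and that it is regular and real-analytic by the implicit function theorem applied to the nonvanishing gradient (the $\sigma$-derivative is nonzero by \eqref{deriperr}). The orthogonal intersection with $\cD$ at $(r_c,r_c)$ follows from the $C^1$ extension to $\cD$ established in Theorem \ref{th:cerouno}: solving ${\rm Per}(r,r)=1/\sqrt{1-r^3}=c$ gives $r_c=\sqrt[3]{1-1/c^2}$, and the gradient of ${\rm Per}$ at $(r_c,r_c)$ is parallel to $(1,1)$ by the symmetry $\mathrm{Per}(r_1,r_2)=\mathrm{Per}(r_2,r_1)$ (equal partials on $\cD$, as already computed in the proof of Theorem \ref{th:cerouno}), so the level curve is orthogonal to $\cD$.

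The main obstacle I anticipate is the uniform sign control of $\partial_\sigma(-q(t))$ over the whole interval $[r_1,r_2]$ for all $(r_1,r_2)\in W$. Because $r_3=1/(r_1 r_2)$ depends on both parameters, varying $\sigma$ at fixed $\delta$ moves all three roots in a coupled way, and $-q(t)$ is a product of three factors of mixed sign behavior; one must verify that the net effect on the radicand has a definite sign for every interior $t$. I expect this to require either a careful factorization of $\partial_\sigma(-q(t))$ as a polynomial in $t$ whose positivity on $[r_1,r_2]$ can be read off from the root configuration, or an integral identity that bypasses the pointwise estimate. Establishing this sign cleanly — rather than by a brute-force discriminant computation — is the crux of the argument.
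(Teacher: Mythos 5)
Your overall architecture (prove \eqref{deriperr} first, then read off regularity from the nonvanishing gradient, connectedness from monotonicity along the lines $r_1-r_2=\mathrm{const}$, and orthogonality at $\cD$ from the equal limiting partials computed in Theorem \ref{th:cerouno}) matches the paper, and your connectedness argument --- each line $r_1-r_2=\mathrm{const}$ meets the level set exactly once because $\mathrm{Per}$ is a strictly increasing bijection of that line onto $(0,1)$, so the level set is a graph over the $\delta$-axis --- is a clean alternative to the paper's open-and-closed argument. The problem is the crux step, which you correctly identify as the crux but then propose to resolve by a claim that is false. You want to show that increasing $\sigma=r_1+r_2$ at fixed $\delta=r_1-r_2$ decreases the radicand $-q(t)=(t-r_1)(t-r_2)(t-r_3)$ for \emph{every} $t\in[r_1,r_2]$. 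Writing $\partial_\sigma r_1=\partial_\sigma r_2=\tfrac12$ and $\partial_\sigma r_3=-\sigma/(2r_1^2r_2^2)>0$, one finds near $t=r_1$ that $\partial_\sigma(-q(t))\approx -\tfrac12(r_1-r_2)(r_1-r_3)<0$, but near $t=r_2$ that $\partial_\sigma(-q(t))\approx -\tfrac12(r_2-r_1)(r_2-r_3)>0$. So the sign is genuinely non-uniform on $[r_1,r_2]$: the pointwise monotonicity you hope for fails at the $r_2$-end, and no ``careful factorization whose positivity can be read off'' can exist. (There is a secondary issue: in the $t$-variable the endpoints of integration move with the parameters and the integrand has inverse-square-root singularities there, so differentiation under the integral sign must be done in the fixed-domain $s$-variable of \eqref{perotra}--\eqref{Qper} anyway.)

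This sign change is exactly what the paper's proof is built to handle. In the $s$-variable one computes $Q_{r_1}+Q_{r_2}=\cL/\Upsilon$ as in \eqref{peres3}, with $\Upsilon<0$ and $\cL$ \emph{linear} in $s$ and negative at $s=1$; so the derivative of the integrand is positive at $s=1$ but may be negative at $s=0$ (this happens precisely when $r_1-r_2+3r_1^2r_2^2<0$, e.g.\ for $r_1,r_2$ small in magnitude, so the bad case is unavoidable). The paper then splits the integral at the unique zero $s_0^*$ of $\cL$, bounds the strictly increasing weight $(1-r_1r_2(r_1+(r_2-r_1)s))^{-3/2}$ from below by its value $R^*$ at $s_0^*$ on one piece and from above by $R^*$ on the other --- the inequalities go the right way because $\cL$ has opposite signs on the two pieces --- and reduces to a single explicit integral with an elementary primitive, whose value $\tfrac{3r_1r_2}{2}>0$ gives \eqref{deriperr}. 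You would need to supply an argument of this kind (an integral comparison that exploits the single sign change, not a pointwise estimate) for your proof to close; as written, the reduction you propose cannot be carried out.
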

\begin{proof} 
It follows from Theorem \ref{th:cerouno} that, for any $c\in (0,1)$, the level set ${\rm Per}^{-1}(c)$ contains a regular, real analytic curve that intersects $\cD$ orthogonally at $(r_c,r_c)$. We wish to show that any level set ${\rm Per}^{-1}(c)$ is one of such regular, real analytic curves starting at $\mathcal{D}$; in particular, these level sets are connected.

To start we will prove \eqref{deriperr}. First, by differentiation of $Q(s,r_1,r_2)$ in \eqref{Qper} we have
\begin{equation}\label{peres3}
Q_{r_1}+Q_{r_2} =\frac{\cL(s;r_1,r_2)}{\Upsilon(s;r_1,r_2)},
\end{equation}
where $$\cL(s;r_1,r_2):=r_1(r_1-r_2 + 3 r_1^2 r_2^2) - s (r_1 -r_2) (r_1+r_2 + 3 r_1^2 r_2^2)$$ and
$$\Upsilon (s;r_1,r_2):= -2\pi \sqrt{r_1 r_2} \sqrt{s (1-s)} (r_1+(r_2-r_1)s)^2 \, (1- r_1 r_2((r_1+(r_2-r_1)s)^{3/2}<0.$$ The numerator $\cL(s;r_1,r_2)$ is linear in $s$, and is always negative at $s=1$, since $r_1<r_2<0$. As $\Upsilon(s;r_1,r_2)$ is negative, $Q_{r_1}+Q_{r_2}$ is always positive at $s=1$.

Assume first of all that $$r_1 -r_2 + 3r_1^2 r_2^2 \geq 0.$$ Thus, $Q_{r_1}+Q_{r_2}\geq 0$ at $s=0$, and so it is non-negative at every point. By \eqref{perotra} we obtain directly that \eqref{deriperr} holds.

We assume then in what follows that 
\begin{equation*}
r_1 -r_2 + 3r_1^2 r_2^2 < 0.
\end{equation*} This implies that, for any $(r_1,r_2)\in W$, the (linear) numerator $\cL(s;r_1,r_2)$ in \eqref{peres3} has a unique change of sign between $s=0$ and $s=1$. Specifically, there exist $s^*\in (0,1)$ such that $\cL(s^*)=0$, with $\cL(s)>0$ if $s\in [0,s_0^*)$ and $\cL(s)<0$ if $s\in (s^*,1]$.

In addition, the function 
\begin{equation}\label{peres4}
\frac{1}{(1-r_1 r_2 (r_1+(r_2-r_1)s))^{3/2}}
\end{equation} is strictly increasing, since $r_1<r_2<0$. Call $R^*>0$ to the value of \eqref{peres4} at $s=s_0^*$. Hence, we have:

\begin{equation}
\frac{1}{(1-r_1^2 r_2)^{3/2}} \leq \frac{1}{(1-r_1 r_2 (r_1+(r_2-r_1)s))^{3/2}} \leq R^* \hspace{0.5cm} \text{ in \hspace{0.1cm} $[0,s_0^*]$,}
\end{equation}
and
\begin{equation}
R^*\leq  \frac{1}{(1-r_1 r_2 (r_1+(r_2-r_1)s))^{3/2}} \leq \frac{1}{(1-r_2^2 r_1)^{3/2}}  \hspace{0.5cm} \text{ in \hspace{0.1cm} $[s_0^*,1]$.}
\end{equation}
Thus, taking into account the sign of $\cL$, we have
$$\int_{s_0^*}^1 \frac{\cL(s;r_1,r_2)}{\Upsilon(s;r_1,r_2)} \, ds \geq R^* \int_{s_0^*}^1 \frac{\cL(s;r_1,r_2)}{-2\pi \sqrt{r_1 r_2} \sqrt{s (1-s)}(r_1+(r_2-r_1)s)^2} \, ds$$ and 
$$\int_0^{s_0^*} \frac{\cL(s;r_1,r_2)}{\Upsilon(s;r_1,r_2)} \, ds \geq R^* \int_0^{s_0^*} \frac{\cL(s;r_1,r_2)}{-2\pi \sqrt{r_1 r_2} \sqrt{s (1-s)}(r_1+(r_2-r_1)s)^2} \, ds.$$ 

Therefore, by \eqref{perotra} and \eqref{peres3}, we have

\begin{equation}\label{arre}\def\arraystretch{2.4}\begin{array}{lll}\displaystyle \frac{\parc {\rm Per}(r_1,r_2)}{\parc (r_1+r_2)}& =& \displaystyle \int_0^{s_0^*} \frac{\cL(s;r_1,r_2)}{\Upsilon(s;r_1,r_2)} \, ds +  \int_{s_0^*}^1 \frac{\cL(s;r_1,r_2)}{\Upsilon(s;r_1,r_2)} \, ds \\ & \geq & \displaystyle R^* \int_0^1 \frac{\cL(s;r_1,r_2)}{-2\pi \sqrt{r_1 r_2} \sqrt{s (1-s)}(r_1+(r_2-r_1)s)^2} \, ds
\end{array}
\end{equation}
The function $$\frac{(r_1-r_2) \sqrt{s (1-s)}}{\pi \sqrt{r_1 r_2}} + \frac{3}{\pi} {\rm arctan} \left(\sqrt{\frac{r_2 s}{r_1 (1-s)}}\right)$$ is a primitive of the above integral. Thus, from \eqref{arre} we obtain $$\frac{\parc {\rm Per}(r_1,r_2)}{\parc (r_1+r_2)} \geq \frac{3 r_1 r_2}{2} R^* >0.$$ This proves \eqref{deriperr}.

Hence, the gradient of ${\rm Per}(r_1,r_2)$ does not vanish in $W$, and so the level sets of ${\rm Per}$ are regular, real analytic curves, at first maybe not connected, that intersect transversely the lines $r_1-r_2={\rm const}$, and ${\rm Per}(r_1,r_2)$ increases along any such line as $r_1$ (or $r_2$) increases.

Finally, we can see that the union of all the level curves of ${\rm Per}(r_1,r_2)$ that start at the diagonal $\cD$ is equal to $W$, by a connectedness argument. Indeed, such set is trivially closed, and it is also open due to the regularity of all the level curves of ${\rm Per}$.
\end{proof}

\section{Free boundary minimal annuli: proof of Theorem \ref{main}}\label{sec:height}

\subsection{Minimal annuli with free boundary in two spheres}

Let $(r_1,r_2)\in \Omega_0$, where $\Omega_0\subset \R^2$ is the open set defined by \eqref{omw}, and let $\Sigma_{\tau}=\Sigma_{\tau}(r_1,r_2)$ denote the minimal surface in $\R^3$ of Definition \ref{sita}. In this way, $\Sigma_{\tau}$ is parametrized as  map $\psi(u,v):[-\tau,\tau]\times \R\flecha \R^3$, and the curve $\psi(0,v)$ is a horizontal planar geodesic contained in the $x_3=0$-plane. 

The next proposition follows from our analysis in Section \ref{sec:wei}, and collects some of the most important properties of $\Sigma_{\tau}$ proved there:

\begin{proposition}\label{2centers}
$\Sigma_{\tau}$ is an immersed minimal strip in $\R^3$ that satisfies the following properties:
\begin{enumerate}
\item
The curvature lines $v\mapsto \psi(\cdot,v)$ of $\Sigma_{\tau}$ are spherical curvature lines.
 \item
$\Sigma_{\tau}$ is symmetric with respect to the planes $x_3=0$ and $x_2=0$. Moreover, the symmetry with respect to $x_3=0$ interchanges the boundary curves of $\parc \Sigma_{\tau}$.
 \item
Each component of $\parc \Sigma_{\tau}$ intersects orthogonally a sphere of a certain radius $R(\tau)>0$, and the centers $c(\tau),c(-\tau)$ of these two spheres are symmetric with respect to $x_3=0$. In particular, $c_3(-\tau)=-c_3(\tau)$.
\item
If additionally $c_3(\tau)=0$, then a suitable homothety and translation of $\Sigma_{\tau}$ is a free boundary minimal strip immersed in the unit ball $\B$ of $\R^3$.
\end{enumerate}
Assume also that ${\rm Per}(r_1,r_2)=\frac{m}{n}\in \Q$, where $m,n$ have no common divisors. Then:
\begin{enumerate}
\item[i)]
The quotient of $\Sigma_{\tau}$ by the conformal projection \eqref{quotient} defines a compact minimal annulus $\Sigma_{\tau}^*$ in $\R^3$, with all the above properties.
\item[ii)]
The Gauss map of $\Sigma_{\tau}^*$ defines a regular $m$-fold covering of the great circle $\S^2\cap \{x_3=0\}$ along the horizontal planar geodesic $\psi(0,v)$ of $\Sigma_{\tau}^*$.
\item[iii)]
$\Sigma_{\tau}^*$ has a prismatic symmetry group $D_n\times \Z_2$ of order $4n$.
\end{enumerate}
\end{proposition}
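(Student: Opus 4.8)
The plan is to read Proposition \ref{2centers} as a summary that assembles, for the normalization $\hat{g}_0=1$ defining $\Sigma_\tau$ in Definition \ref{sita}, the facts already established for $\Sigma=\Sigma(r_1,r_2;\hat{g}_0)$ in Section \ref{sec:wei}. Accordingly the proof will mostly invoke those results and check that they persist both after restriction to the parameter strip $\{|u|\le\tau\}$ and after passage to the quotient \eqref{quotient}. Recall that $\Sigma_\tau$ is parametrized by $\psi(u,v)$, $(u,v)\in[-\tau,\tau]\times\R$, with $\tau\in(0,\omega_1)$ the number produced by Proposition \ref{taucero}.

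First I would dispose of items (1)--(3). Item (1) is exactly Lemma \ref{lem:sigen}(1). For item (2), the symmetry in $x_2=0$ is Lemma \ref{lem:sigen}(4), and, since $\hat{g}_0=1$, the symmetry in $x_3=0$ is Lemma \ref{lem:sigen}(6); both rest on $\phi,g$ being real along $\R$ and $|g|\equiv1$ along $i\R$. To see that the $x_3=0$ reflection swaps the two boundary curves, I would note that in the conformal parameter this reflection acts by reversing the sign of $u$ (it is the anticonformal map $z\mapsto-\bar z$ fixing the geodesic $\{u=0\}$), hence sends $\{u=\tau\}$ to $\{u=-\tau\}$. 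Item (3) follows by applying Proposition \ref{taucero} (via Corollary \ref{cor:orbits}) at $u=\tau$ to obtain the orthogonal intersection with $S(c(\tau),R(\tau))$, and then transporting this along the $x_3=0$ reflection to the component $\{u=-\tau\}$; this forces the two spheres to be congruent with centers symmetric about $x_3=0$, so $c_3(-\tau)=-c_3(\tau)$, as already recorded after Definition \ref{sita}.

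The one step carrying genuine content, and the main (if modest) obstacle, is item (4): ``free boundary in $\B$'' requires both boundary curves to meet \emph{one and the same} sphere orthogonally, whereas (3) only supplies two a priori distinct congruent spheres. Here I would use Lemma \ref{lem:sigen}(2): all centers $c(u)$ lie on a common vertical line $L$, so $c(\tau)$ and $c(-\tau)$ are determined by their heights along $L$. If $c_3(\tau)=0$ then $c_3(-\tau)=-c_3(\tau)=0$ as well, whence $c(\tau)=c(-\tau)=:c$, and both boundary curves lie on the single sphere $S(c,R(\tau))$, meeting it orthogonally. Translating $c$ to the origin and applying the homothety of ratio $1/R(\tau)$ then sends $S(c,R(\tau))$ to the unit sphere $\S^2=\parc\B$, so that $\Sigma_\tau$ becomes a minimal strip meeting $\parc\B$ orthogonally along its whole boundary, i.e. a free boundary minimal strip in $\B$.

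Finally, for the quotient statements I would assume ${\rm Per}(r_1,r_2)=m/n$ with $\gcd(m,n)=1$. By Lemma \ref{lemper} this is equivalent to $\psi(u,v)=\psi(u,v+2n\omega_2)$, so $\psi$ descends through \eqref{quotient} to a compact immersed annulus $\Sigma_\tau^*$; as (1)--(4) are invariant under this $v$-translation, $\Sigma_\tau^*$ inherits them, which is (i). For (ii), $g$ maps the planar geodesic $\psi(0,v)$ into the circle $|z|=\hat{g}_0=1$ (because $|g|\equiv1$ on $i\R$), i.e. into the equator $\S^2\cap\{x_3=0\}$, and this is a regular covering since $g'(iv)\ne0$; its degree is read off from \eqref{interm}, namely $g(2n\omega_2)=g(0)\exp(-2\pi i\kappa n)=g(0)\exp(-2\pi i m)$, giving winding number $m$. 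Statement (iii) is Corollary \ref{rem:symmetries} in the case $\hat{g}_0=1$: the isometries preserving $\Sigma_\tau^*$ preserve its unique planar curvature line $\gamma$, whose dihedral symmetry is $D_n$, while the $x_3=0$ reflection of item (2) supplies the extra $\Z_2$, yielding the prismatic group $D_n\times\Z_2$ of order $4n$.
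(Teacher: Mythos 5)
Your proof is correct and follows essentially the same route as the paper, which states this proposition without a written proof, presenting it as a compilation of the results of Section \ref{sec:wei} (Lemma \ref{lem:sigen}, Proposition \ref{taucero}, Lemma \ref{lemper}, Corollary \ref{rem:symmetries} and the surrounding discussion of the rotation index of the Gauss map), exactly as you assemble it. The only point you leave implicit in item (4) is that $\Sigma_{\tau}$ actually lies inside the ball bounded by $S(c(\tau),R(\tau))$, which follows from the subharmonicity of $|\psi-c|^2$ on a minimal surface and is likewise taken for granted in the paper.
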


In this section we show that some of the compact minimal annuli $\Sigma_{\tau}^*$ of Proposition \ref{2centers} are actually free boundary in the unit ball (after a homothety and a translation). For that, by item (4) of Proposition \ref{2centers}, we need to control the height $c_3(\tau)$ of the boundary curve $\psi(\tau,v)$ of $\parc \Sigma_{\tau}$. This will be done by studying the nodal set of the height map $\mathfrak{h}(r_1,r_2)$ that we introduce below.

\subsection{The height map}\label{subsec:height}

Let $\Omega_0\subset \R^2$ be the open set defined by \eqref{omw}. By Proposition \ref{taucero}, the height $c_3(\tau)$ of the center of the sphere that contains the boundary curve $\psi(\tau,v)$ of $\Sigma_{\tau}$ is given by \eqref{centro}. 

\begin{definition}\label{def:height}
The \emph{height map} $\mathfrak{h}(r_1,r_2):\Omega_0\flecha \R$ is defined as
\begin{equation}\label{heimap}
\mathfrak{h}(r_1,r_2) = -c_3(\tau),  
\end{equation}
where $c_3(\tau)$ is given by \eqref{centro}. All quantities in \eqref{centro} are those established in Section \ref{sec:wei}, and they depend analytically on $(r_1,r_2)$. So, $\mathfrak{h}(r_1,r_2)$ is an analytic map.
\end{definition}

In order to control the nodal set $\mathfrak{h}^{-1}(0)\subset \Omega_0$, we will study how $\mathfrak{h}(r_1,r_2)$ behaves when $(r_1,r_2)$ approaches the segment $$L_0:=\{(r,r): r\in (-\sqrt[3]{2},-1]\}\subset \parc \Omega_0\cap \{r_1=r_2\}.$$
For this, we will use the analysis of the degenerate case of system \eqref{system2} in Section \ref{sec:dege}.

\begin{remark}
As $(r_1,r_2)\to (r,r)$, the fundamental lattice generated by $(2\omega_1,2\omega_2)$ degenerates, and in the limit we have 
\begin{equation}\label{omein}
\omega_1 = \8, \hspace{1cm} \omega_2 = \frac{-i \pi r}{\sqrt{1-r^3}}.
\end{equation} 
This follows from the general expression (see \cite[p. 444]{WW}) $$\omega_1 =\int_{e_1}^{\8} \frac{1}{\sqrt{4x^3-g_2 x - g_3}} dx, \hspace{0.5cm} \omega_2 = -i \int_{-\8}^{e_3} \frac{1}{\sqrt{-4x^3+g_2 x + g_3}} dx,$$ taking into account in our situation the relations \eqref{def:ej} and \eqref{def:gs}.

\end{remark}

\begin{theorem}\label{th:hei}
The height map $\mathfrak{h}:\Omega_0\flecha \R$ extends continuously to $\Omega_0\cup L_0$. Moreover, there exists a value $r^*\approx -1.078124$ such that:
\begin{enumerate}
\item
$\mathfrak{h}(r^*,r^*)=0$.
\item
$\mathfrak{h}(r,r)<0$ if $r\in (-\sqrt[3]{2}, r^*)$ and $\mathfrak{h}(r,r)>0$ if $r\in (r^*,-1]$.
 \item
The set $\mathfrak{h}^{-1}(0)\subset \Omega_0$ contains a regular, real analytic curve $\gamma^*$ with endpoint $(r^*,r^*)$.
\item
The period map ${\rm Per} (r_1,r_2)$ is not constant along $\gamma^*$.
\end{enumerate}
\end{theorem}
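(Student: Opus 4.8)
The plan is to prove the four assertions of Theorem~\ref{th:hei} by combining the explicit formula \eqref{centro} for $c_3(\tau)$ with the degenerate analysis of Section~\ref{sec:dege}. The key geometric insight is that as $(r_1,r_2)\to(r,r)\in L_0$, the lattice degenerates according to \eqref{omein}, so $\omega_1\to\infty$ while $\omega_2$ stays finite and purely imaginary. In this limit the whole construction reduces to the degenerate Hamiltonian system \eqref{system2} with $r_1=r_2=r$, whose orbit $\Gamma_0$ we controlled exactly in Theorem~\ref{th:dege}. In particular, the value $\tau$ of Definition~\ref{sita} is recovered from the intersection of $\Gamma_0$ with the diagonal $s+t=0$: by Corollary~\ref{cor:orbits} and the correspondence \eqref{change}, $\beta(\tau)=0$ happens precisely when $\alfa(\tau)^2 = -s(\tau)t(\tau) = \hat{\alfa}(r)^2$, with $\hat{\alfa}(r)$ given explicitly by Theorem~\ref{th:dege}. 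So the first task is to take the limit of each term in \eqref{centro} as $(r_1,r_2)\to(r,r)$, using the degenerate expressions for $\wp,\zeta,\sigma$ (which become elementary hyperbolic/rational functions when $\Delta_{\rm mod}\to 0$). This yields a closed-form limiting height function $\mathfrak{h}_0(r):=\lim \mathfrak{h}(r_1,r_2)$ on $L_0$, expressible through $H(x)$ in \eqref{defache} evaluated at $\hat\alfa(r)$, and simultaneously establishes the continuous extension claimed in the first sentence of the statement.

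For items (1) and (2), I would analyze the sign of this limiting function $\mathfrak{h}_0(r)$ on the interval $r\in(-\sqrt[3]{2},-1]$. Having an explicit formula, the plan is to show $\mathfrak{h}_0$ is continuous and strictly monotone (or at least changes sign exactly once), so that the intermediate value theorem produces a unique root $r^*$. The numerical value $r^*\approx -1.078124$ suggests that $\mathfrak{h}_0$ is evaluated and its sign checked at the endpoints: at $r=-1$ one expects $\mathfrak{h}_0(-1)>0$ and as $r\to-\sqrt[3]{2}$ one expects $\mathfrak{h}_0\to$ a negative value (or $-\infty$), consistent with the flat-end degeneration described by Wente. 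The monotonicity is what upgrades ``a root exists'' to ``a unique root $r^*$ with the stated sign pattern.''

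For item (3), once $\mathfrak{h}_0(r^*)=0$ with $\mathfrak{h}_0'(r^*)\neq 0$ (transversality along $L_0$), I would invoke the analyticity of $\mathfrak{h}$ on $\Omega_0\cup L_0$ together with the implicit function theorem to produce a regular real-analytic branch $\gamma^*$ of the nodal set $\mathfrak{h}^{-1}(0)$ emanating from $(r^*,r^*)$. The essential hypothesis here is that the gradient of $\mathfrak{h}$ does not vanish at $(r^*,r^*)$; the derivative along the diagonal being nonzero (from the strict monotonicity of $\mathfrak{h}_0$) already guarantees a nonzero gradient, so the level set is a regular curve near its endpoint. Item (4) then follows by contradiction: if ${\rm Per}$ were constant along $\gamma^*$, then since $\gamma^*$ is transverse to $\cD$ and ${\rm Per}$ restricted to $\cD$ is the strictly increasing diffeomorphism $r\mapsto 1/\sqrt{1-r^3}$ of Theorem~\ref{th:cerouno}, the curve $\gamma^*$ would have to coincide locally with a single level set ${\rm Per}^{-1}(c)$; but by Theorem~\ref{th:peri} that level set meets $\cD$ orthogonally at the distinguished point $(r_c,r_c)$ determined by $c$, and a dimension/transversality comparison shows $\gamma^*$ cannot be contained in it—one exploits that \eqref{deriperr} forces ${\rm Per}$ to vary in the $r_1+r_2$ direction, whereas $\gamma^*$ is not a line of constant $r_1+r_2$.

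The hard part will be item (1)--(2), specifically extracting the limiting height $\mathfrak{h}_0(r)$ in closed form and proving its monotonicity. The degeneration of \eqref{centro} involves delicate cancellations: several terms $b\tau$, $4\zeta(\tau)$, $2\wp'(\tau)/(\wp(\tau)-e_1)$ individually blow up as $\omega_1\to\infty$ (since $\tau$ itself may tend to $\infty$ along the degenerating lattice), and one must verify that their divergences cancel against the final rational term in $g(\tau),\phi(\tau)$ to leave a finite limit. Organizing this cancellation—ideally by re-expressing \eqref{centro} via \eqref{3corn} as $\int_0^\tau\phi\,d\nu$ minus a boundary correction, then passing to the degenerate $\phi$ and the explicit $\hat\alfa(r)$ from Theorem~\ref{th:dege}—is the technical crux. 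A secondary subtlety is that Theorem~\ref{th:dege} splits into two cases according to whether $r\in[r^\sharp,-1]$ or $r\in(-\sqrt[3]{2},r^\sharp]$, so the formula for $\mathfrak{h}_0(r)$ and its monotonicity analysis must be carried out piecewise and matched continuously at $r^\sharp$, where $\hat\alfa(r^\sharp)=1/(r^\sharp)^2$.
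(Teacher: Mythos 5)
Your overall strategy for the continuous extension and for items (1)--(3) matches the paper's: degenerate the lattice via \eqref{omein}, reduce to the system of Section \ref{sec:dege}, identify $\tau$ through the crossing of $\Gamma_0$ with $s+t=0$ and the explicit $\hat{\alfa}(r)$ of Theorem \ref{th:dege}, organize the limit of \eqref{centro} through the first equality in \eqref{heta}, and handle the two regimes separated by $r^{\sharp}$. The paper's limit is the clean expression $\mathfrak{h}(r,r)=-r^2\bigl(\tau(r)/r-\coth(\tau(r)/r)\bigr)$, and instead of proving monotonicity of this function it characterizes its zeros by the conditions $\hat{\cG}(r)=-1$ or $\hat{\cF}(r)=-1$ and studies those auxiliary functions; this is a difference of bookkeeping, not of substance, and your plan for (1)--(3) is viable (modulo the fact that your IFT argument for item (3) needs a $C^1$, not merely continuous, extension of $\mathfrak{h}$ to $L_0$, which neither you nor the paper spells out).

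The genuine gap is in item (4). Your proposed contradiction is vacuous. If ${\rm Per}$ is constant along $\gamma^*$, then $\gamma^*$ is contained in the level set ${\rm Per}^{-1}(c^*)$ with $c^*={\rm Per}(r^*,r^*)=1/\sqrt{1-(r^*)^3}$, and by Theorem \ref{th:peri} that level set is a regular connected analytic curve meeting $\cD$ precisely at $(r^*,r^*)$ --- the same endpoint as $\gamma^*$. There is no ``distinguished point'' mismatch and no dimension obstruction: both objects are analytic arcs emanating from the same boundary point, and nothing in \eqref{deriperr} prevents them from coinciding; that inequality only says level curves of ${\rm Per}$ are transverse to the lines $r_1-r_2={\rm const}$, which constrains neither $\gamma^*$ nor its possible containment in a level set. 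Ruling out this coincidence is exactly the content of item (4) and requires a new ingredient. The paper's argument: if $\gamma^*\subset{\rm Per}^{-1}(c^*)$, then $\beta(u^*)=0$ (where $u^*$ is the unique parameter with $c_3(u^*)=0$, via Proposition \ref{monocen}) holds along $\gamma^*$, hence by analyticity along the \emph{entire connected} level curve ${\rm Per}^{-1}(c^*)$; one then travels along this level curve far from the diagonal to its intersection $(\hat{r}_1,\hat{r}_2)$ with the line $r_1-r_2=-8$, where an explicit evaluation (using the elliptic-function formulas at the end of Section \ref{sec:wei}) gives $\beta(u_0)>0$ and $c_3(u_0)<0$ at $u_0=4/5$; combined with the monotonicity of $c_3$ and the single sign change of $\beta$ guaranteed by Remark \ref{rema:orbits} on $\Omega\setminus\overline{\Omega}_0$, this forces $\beta(u^*)>0$ there, a contradiction. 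Without some quantitative input of this kind at a point of the level curve away from $\cD$, item (4) does not follow.
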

\begin{proof}
Assume that $(r_1,r_2)\in \Omega_0 \flecha (r,r)\in L_0$. As explained in Section \ref{sec:wei}, the elliptic function $\phi(z)$ in \eqref{defF} for $(r_1,r_2)$ satisfies that $\phi(i\R)$ is contained in the real interval $[r_1,r_2]$. Thus, in the limit we have
\begin{equation}\label{weidatalimit}
\phi(z)= r,\hspace{0.5cm} g(z) = {\rm exp} (z/r).
\end{equation}
These are the Weierstrass data of the universal covering of a catenoid with necksize $r^2$ and vertical axis, and so, the surfaces $\Sigma_{\tau}=\Sigma_{\tau}(r_1,r_2)$ converge uniformly on compact sets to some translation of it. 

Unluckily, the catenoid is a very degenerate situation with respect to our study. Indeed, on the catenoid, the fundamental equation \eqref{om1}, and in particular the pair $(\alfa(u),\beta(u))$, does not carry meaningful information on it, due to the fact that the metric $e^{2\omega}$ only depends on $u$. More specifically, for any horizontal curvature line $\gamma=\psi(u_0,v)$ of any catenoid $\Sigma_c$, there exists a sphere $S(p,R)$ with center in the axis of $\Sigma_c$ that intersects $\Sigma_c$ orthogonally along $\gamma$. This means in our situation that we cannot read the limit values of the height map $\mathfrak{h}(r_1,r_2)$ directly from the geometry of the limit catenoid, and we need to follow a more indirect argument.

For any $(r_1,r_2)\in \Omega_0$, let $\Sigma_{\tau}=\Sigma_{\tau}(r_1,r_2)$ denote the minimal surface introduced in Definition \ref{sita}. Let 
$(\alfa(u),\beta(u))$ be the solution to \eqref{system} associated to $\Sigma_{\tau}(r_1,r_2)$. By Remark \ref{iniconi}, it has the initial conditions \begin{equation}\label{inicondi}
\alfa(0)=\beta(0) =0.
\end{equation}
Therefore, for each $(r_1,r_2)$ we have $\tau\in (0,\omega_1)$ and $\beta(\tau)=0$. Note that $\tau=\tau(r_1,r_2)$. \emph{We denote by $\hat{\alfa}=\hat{\alfa}(r_1,r_2)$ the value of $\alfa(\tau)$ at $\tau(r_1,r_2)$.} As usual, we consider the special orbit $\Gamma_0$ of the associated system \eqref{system2} (see Definition \ref{def:gamma0}), determined by the initial conditions \eqref{inicondi}.

On the other hand, given $r\in (-\sqrt[3]{2},-1]$, let $(\bar{\alfa}(u),\bar{\beta}(u))$ be the unique solution to the system \eqref{system2} for the choices \eqref{degecon} for $(r_1,r_2)$, with the trivial initial conditions $\bar{\alfa}(0)= \bar{\beta}(0)=0$.  This is the degenerate case studied in Section \ref{sec:dege}. 

As $(r_1,r_2)\to (r,r)$, the solutions $(\alfa(u),\beta(u))$ converge to $(\bar{\alfa}(u),\bar{\beta}(u))$. In the same way, as $(r_1,r_2)\to (r,r)$, the orbits $\Gamma_0(r_1,r_2)$ converge to the corresponding orbit $\Gamma_0=\Gamma_0(r)$ of the limit system, and the values $\hat{\alfa}(r_1,r_2)$ converge to $\hat{\alfa}(r)$ as introduced in Definition \ref{def:dege}. Note that the value $\hat{\alfa}(r)$ is explicitly computed in Theorem \ref{th:dege}. Thus, there exists a finite number $\tau(r)>0$ such that $\bar{\alfa}(\tau(r))= \hat{\alfa}(r)$ and $\bar{\beta}(\tau(r)) =0$. In this way, the quantities $\tau(r_1,r_2)$ defined above converge to $\tau(r)$.

Evaluating \eqref{om1} for $(r_1,r_2)$ at $(\tau,0)$, we obtain $$2 \omega_u (\tau,0) e^{\omega(\tau,0)} = \alfa(\tau)e^{2\omega(\tau,0)} =  \hat{\alfa}(r_1,r_2)e^{2\omega(\tau,0)} .$$ Taking limits, we have by \eqref{eome2}, \eqref{centro5} and \eqref{weidatalimit}
\begin{equation}\label{alta}
\hat{\alfa}(r)= \frac{-2\sinh\left(\frac{\tau(r)}{r}\right)}{r^2 \cosh^2\left(\frac{\tau(r)}{r}\right)},
\end{equation}
which determines $\hat{\alfa}(r)$ in terms of $\tau(r)$, and viceversa.

Coming now back to \eqref{centro}, we have for its last term 
\begin{equation}\label{2term}
  \frac{(1+g(\tau)^2) \, \phi(\tau)^2}{(1+g(\tau)^2)\phi'(\tau) + g(\tau)^2 -1} \longrightarrow r^2 \coth\left(\frac{\tau(r)}{r}\right), \hspace{0.6cm} \text{ as } (r_1,r_2)\to (r,r).
\end{equation}
So, by \eqref{weidatalimit} and the first equality in \eqref{heta}, we deduce that 
%
%
$\mathfrak{h}(r_1,r_2)$ in \eqref{heimap} satisfies
\begin{equation}\label{limh}
\mathfrak{h}(r_1,r_2) \longrightarrow -r^2 \left\{ \frac{\tau(r)}{r} - \coth\left(\frac{\tau(r)}{r}\right)\right\}, \hspace{0.6cm} \text{ as } (r_1,r_2)\to (r,r).
\end{equation}
This proves in particular that $\mathfrak{h}(r_1,r_2)$ extends continuously to the segment $L_0$. We now prove properties (1) and (2) in the statement of the theorem.

To start, let us assume that $r\in (-\sqrt[3]{2},r^{\sharp}]$, where $r^{\sharp}$ is the value in Theorem \ref{th:dege}. We prove next that $\mathfrak{h}(r,r)\neq 0$ in that interval. 

Arguing by contradiction, if $\mathfrak{h}(r,r)=0$, then by \eqref{limh} we have $\tau(r)/r=x_0<0$, where $x_0$ is the only negative solution to $x_0=\coth(x_0)$. So, by \eqref{alta}, $$\hat{\alfa}(r)= \frac{-2}{r^2 x_0 \cosh(x_0)}.$$ 
By item (ii) of Theorem \ref{th:dege}, we have $\hat{\cG}(r)=-1$, where 
$$\hat{\cG}(x):=\cG\left(\frac{-2}{x_0 \cosh(x_0) x^2 }\right).$$ However, one can check that $\hat{\cG}(x)$ does not take the value $-1$ in the interval $[-\sqrt[3]{2},r^{\sharp}]$; indeed, the only solution to $\hat{\cG}(x)=-1$ happens near the value $r\approx -1.0584$, outside the interval. This contradiction shows that $\mathfrak{h}(r,r)\neq 0$ in $(-\sqrt[3]{2},r^{\sharp}]$. As a matter of fact, we easily show that $\mathfrak{h}(r,r)< 0$ there, since by Theorem \ref{th:dege} we can compute explicitly $$\mathfrak{h}(r^{\sharp},r^{\sharp})= -(r^{\sharp})^2 \left(\sqrt{2} - \cosh^{-1} (\sqrt{2})\right)<0.$$

We now can make a similar argument on the interval $[r^{\sharp},-1]$. This time, if $\mathfrak{h}(r,r)=0$ for some $r\in [r^{\sharp},-1]$, we can use item (i) of Theorem \ref{th:dege} to obtain that $\hat{\cH}(r)=-1$, where $$\hat{\cH}(x):= \cH\left(\frac{-2}{x_0 \cosh(x_0) x^2}\right).$$ Now, the function $\hat{\cH}(x)$ is strictly increasing in $[r^{\sharp},-1]$, with $\hat{\cH}(-1)>-1$ and $\hat{\cH}(r^{\sharp})<-1$. Thus, there exists a unique value $r^*\approx -1.078124$ in $[r^{\sharp},-1]$ for which $\hat{\cH}(r^*)=-1$. This value $r^*$ is thus the unique solution to $\mathfrak{h}(r,r)=0$ in $[-\sqrt[3]{2},-1]$. We also have $\mathfrak{h}(r,r)>0$ for $r\in (r^*,0)$ by direct evaluation at $r=-1$. This proves items (1) and (2) of the theorem.

Item (3) is a direct consequence of items (1), (2) and the analyticity of $\mathfrak{h}(r_1,r_2)$ in $\Omega_0$.

To prove item (4), assume that the period function ${\rm Per}(r_1,r_2)$ is constant along $\gamma^*$. Then, by \eqref{per:boundary} and Theorem \ref{th:peri}, $\gamma^*\subset \Omega_0$ is a piece of the level set of ${\rm Per}$ given by 
\begin{equation}\label{levelper}
{\rm Per} (r_1,r_2)=c^*:= \frac{1}{\sqrt{1-(r^*)^3}}.
\end{equation}
Note that, by \eqref{perdire}, this level curve is explicitly given by 
 \begin{equation}\label{levelper2}
-\frac{1}{\pi}\int_{r_1}^{r_2} \frac{1}{t\sqrt{-q(t)}} \, dt = c^*.
\end{equation}

Take any $(r_1,r_2)\in {\rm Per}^{-1}(c^*)$. By Proposition \ref{monocen}, there exists a unique $u^*=u^*(r_1,r_2)\in (0,\omega_1)$ for which the center $c(u^*)$ of the sphere where $\psi(u^*,v)$ is contained has height zero, i.e. $c_3(u^*)=0$. Clearly, along the curve $\gamma^*$ we have $u^*=\tau$, and so $\beta(u^*)=0$ for every $(r_1,r_2)\in \gamma^*$. By analyticity, we have $\beta(u^*)=0$ for every $(r_1,r_2)\in {\rm Per}^{-1}(c^*)$, where $u^*=u^*(r_1,r_2)$. We now prove that this is not possible.

To start, let us recall that, by Theorem \ref{th:peri}, the level sets of ${\rm Per}(r_1,r_2)$ are regular, connected, real analytic curves that start from the diagonal $r_1=r_2$ and intersect at most once every line of the form $r_1-r_2 ={\rm const}$, since ${\rm Per}(r_1,r_2)$ is strictly increasing along any such line.


Let $(\hat{r}_1,\hat{r}_2)$ be the unique intersection point of the level curve \eqref{levelper2} and the line $r_1-r_2=-8$; thus, $\hat{r}_2\approx -0.222455$. If for $(\hat{r}_1,\hat{r}_2)$ we take, for instance, the value $u_0:=4/5\in (0,\omega_1)$, a computation following the explicit procedure described at the end of Section \ref{sec:wei} shows that $\beta(u_0)\approx 0.615>0$ while $c_3(u_0)\approx -5.1<0$. By Proposition \ref{monocen}, $c_3(u)$ is a decreasing bijection from $(0,\omega_1)$ into $\R$. Then, since $c_3(u_0)<0$ we deduce that $u^*\in (0,u_0)$. Also, $\beta(u^*)=0$, since $(r_1,r_2)\in {\rm Per}^{-1}(c^*)$. But next observe that, by Remark \ref{rema:orbits}, the equality $\beta=0$ happens exactly once (note that $(\hat{r}_1,\hat{r}_2)\in \Omega\setminus \Omega_0$, so we are in the conditions of this remark). The fact that $\beta(u_0)>0$ implies then that $u_0<u^*$, a contradiction. This completes the proof of Theorem \ref{th:hei}.

\end{proof}

{\it Proof of Theorem \ref{main}:}

Let $\gamma^*\subset \Omega_0$ be the real analytic curve constructed in Theorem \ref{th:hei}. For each $(r_1,r_2)\in \gamma^*$ such that ${\rm Per} (r_1,r_2)\in \Q$, we can consider the compact minimal annulus $\Sigma_{\tau}^*$ constructed in Proposition \ref{2centers}. Note that $\Sigma_{\tau}^*$ has a prismatic symmetry group, as detailed in item (iii) of Proposition \ref{2centers} or in Corollary \ref{rem:symmetries}. Now, the curve $\gamma^*$ lies in the nodal set $\mathfrak{h}^{-1}(0)$ of the height map $\mathfrak{h}(r_1,r_2)$. Thus, by item (4) of Proposition \ref{2centers}, a homothety and translation of $\Sigma_{\tau}^*$ defines a compact minimal annulus with free boundary in $\B^3$, and all the desired properties.

\vspace{0.2cm}

{\it Proof of Corollary \ref{cor:intro}:}

For any $(r_1,r_2)\in \gamma^*$ with ${\rm Per}(r_1,r_2)\in \R\setminus\Q$, the surface $\Sigma_{\tau}$ in Proposition \ref{2centers} is a complete (with boundary), non-compact minimal strip $\Sigma$ with the desired properties. See Remark \ref{rem:fb}.

\subsection{Examples, discussion and open problems}\label{subsec:discussion}

The most interesting examples of free boundary minimal annuli of our family are those associated to periods $\frac{m}{n}\in (0,1)$ where both $m,n$ are as small as possible. Indeed, $m$ gives the rotation index of the Gauss map along the orthogonal intersection of the minimal annulus with the plane $x_3=0$, while $n$ gives the number of periods that are necessary for the annulus to close, and determines its symmetry group. See Corollary \ref{rem:symmetries}.

The value of the period for the free boundary minimal annuli $\Sigma_{\tau}^*$ in $\B^3$ obtained in Theorem \ref{main} cannot be equal to $1/2$, or more generally, to $1/n$, $n\in \N$. We do not detail the complete argument. We merely indicate that if the period of $\Sigma_{\tau}^*$ was $1/n$, then its Gauss map would be a diffeomorphism onto its image in $\S^2$ (observe that it is a diffeomorphism along its central planar geodesic, by item (ii) of Proposition \ref{2centers}), and in these conditions, $\Sigma_{\tau}^*$ would be embedded ($\Sigma_{\tau}^*$ would actually be a radial graph, as explained in \cite{So}). Since $\Sigma_{\tau}^*$ is symmetric with respect to the three coordinate planes of $\R^3$, this contradicts McGrath's theorem in \cite{M}. We note that the main theorem in \cite{M} has been extended by Kusner-McGrath \cite{KM} to the case of antipodal symmetry, and by Seo \cite{Seo} to the case of two arbitrary planes of reflective symmetry. These results are based in part on a characterization by Fraser and Schoen \cite{FS} of the critical catenoid in terms of Steklov eigenvalues, and on the two-piece property of embedded free boundary minimal surfaces by Lima and Menezes \cite{LM}.

%
%
%
%

We have not been able to find any free boundary minimal annulus in $\B^3$ within our family with a period $\geq 2/3$. When we approach the diagonal $r_1=r_2$ along the curve ${\rm Per}^{-1}(2/3)$, the value $\mathfrak{h}(r_1,r_2)$ of the height map is very small, but always positive. In this sense, let us observe that if $r^*$ is the value at which $\gamma^*$ intersects $r_1=r_2$ (see Theorem \ref{th:hei}), then $${\rm Per}(r^*,r^*)=\frac{1}{\sqrt{1-(r^*)^3}} \approx 0.6662 <\frac{2}{3}.$$ We believe that there should exist examples of free boundary minimal annuli in $\B^3$ for all values of the period in the interval $(1/2,{\rm Per}(r^*,r^*))$.

%

The example in Figures \ref{fig:eje1} and \ref{fig:eje2} corresponds to a period equal to $3/5$. We next prove the existence of this example. By Theorem \ref{th:hei}, we have $\mathfrak{h}(r,r)<0$ at the point $(r,r)$ of intersection of ${\rm Per}^{-1}(3/5)$ with $r_1=r_2$. Thus, by monotonicity of $c_3(u)$, we have at $(r,r)$ that the value $\tau$ at which $\beta(\tau)=0$ and the value $u^*$ at which $c_3(u^*)=0$ satisfy $\tau<u^*$. In this way, $\beta(u^*)<0$ at $(r,r)$, by our orbit analysis in Section \ref{sec:dege}.

We now consider the unique point of intersection $(\hat{r}_1,\hat{r}_2)$ of ${\rm Per}^{-1}(3/5)$ with the line $r_1-r_2=-8$. This point lies in $\Omega\setminus \Omega_0$, and so by Remark \ref{rema:orbits}, we have at $(\hat{r}_1,\hat{r}_2)$ that either $\beta(u)>0$ for every $u\in (0,\omega_1)$ or there is a value $\xi\in (0,\omega_1)$ such that $\beta(u)>0$ if $u\in (0,\xi)$ and $\beta(u)<0$ if $u\in (\xi,\omega_1)$. But now, for $u_0= 4/5\in (0,\omega_1)$, a computation following the process described at the end of the proof of Theorem \ref{th:hei} shows that $\beta(u_0)>0$ while $c_3(u_0)<0$. This implies that $\beta(u^*)>0$ at $(\hat{r}_1,\hat{r}_2)$, since $c_3(u)$ is decreasing. Thus, there is a point $(r_1,r_2)$ in the connected curve ${\rm Per}^{-1}(3/5)$ where $\beta(u^*)=0$. Thus, by Proposition \ref{2centers}, we obtain a free boundary minimal annulus in $\B^3$. This corresponds to the example in Figures \ref{fig:eje1} and \ref{fig:eje2}.

%
%
%
%


Similarly, one can prove existence of free boundary minimal annuli in $\B^3$ with other low periods, like $4/7$, $5/8$ or $5/9$.  

Note that there obviously exist different points in $\mathfrak{h}^{-1}(0)$ whose associated periods are rational numbers with the same (maybe large) denominator, say $n$. This provides examples of non-congruent free boundary minimal annuli in $\B^3$ with the same symmetry group (the prismatic group of order $4n$). The authors thank Mario B. Schulz for this remark. This is related to a recent topological non-uniqueness theorem by Carlotto, Schulz and Wiygul \cite{CSW}, who constructed examples of embedded non-congruent free boundary minimal surfaces in $\B^3$ with the same topology and symmetry group.

%
%

In any case, \emph{the classification of all free boundary minimal annuli in $\B^3$ with spherical curvature lines is far from complete}. For instance, there seem to exist such free boundary minimal annuli that are not symmetric with respect to a central planar geodesic, and so, their isometry group would not be prismatic (it would be isomorphic to $D_n$).

The critical catenoid is stable (since it is a radial graph), and is characterized as the unique free boundary minimal annulus in $\B^3$ with index $4$ (as a free boundary minimal surface in $\B^3$), see \cite{D,SZ,T}. It is an interesting problem to study the related stability properties of the minimal annuli in $\B^3$ constructed in Theorem \ref{main}.

\begin{remark}
The minimal annuli of Theorem \ref{main} are, to the authors' knowledge, the first examples of non-embedded free boundary minimal surfaces in $\B^3$ (excluding trivial coverings of embedded ones). They are also the first non-trivial examples of free boundary minimal surfaces in $\B^3$ whose first Steklov eigenvalue $\sigma_1$ is known to be smaller than $1$; see \cite{FS,L} for a discussion on the meaning and importance of this eigenvalue condition. We remark that Fraser and Schoen proved that $\sigma_1\leq 1$, and that if $\sigma_1=1$ holds for an immersed free boundary minimal annulus in $\B^3$, then this annulus is the critical catenoid; thus, $\sigma_1<1$ holds for our minimal annuli here.
\end{remark}

A challenging but very interesting open problem is to \emph{classify all the free boundary minimal annuli in $\B^3$}. A more specific problem is the following intriguing question:


\noindent {\bf Open problem:} \emph{Are there free boundary minimal Möbius bands in $\B^3$?}

By topological reasons, such examples will never be embedded. We note that Fraser and Schoen found in \cite{FS} an embedded minimal Möbius band with free boundary in the unit ball $\B^4$ of $\R^4$, the so-called \emph{critical Möbius band}; see also Fraser and Sargent \cite{FSa}.

\section{Capillary minimal annuli embedded in the unit ball}\label{sec:examples}

\subsection{Embedded minimal annuli in $\B^3$}

The family of minimal annuli foliated by spherical curvature lines that we have constructed also has the remarkable property that it produces compact embedded minimal annuli in the unit ball, like the ones in Figure \ref{fig:eje3}. 
\begin{theorem}\label{capillary}
For any $n\in \N$, $n>1$, there exists a real analytic family $\{\mathbb{A}_n(\mu) : \mu \geq 0\}$ of compact minimal annuli in the unit ball $\B^3$ such that:
\begin{enumerate}
\item
$\mathbb{A}_n(\mu)$ intersects $\parc \B^3$ at a constant angle along $\parc \mathbb{A}_n(\mu)\subset \parc \B^3$, i.e., $\mathbb{A}_n(\mu)$ is a capillary minimal annulus in $\B^3$.
\item
Each $\mathbb{A}_n(\mu)$ is foliated by spherical curvature lines. 
 \item
Each $\mathbb{A}_n(\mu)$ is symmetric with respect to the planes $x_2=0$ and $x_3=0$, and with respect to the rotations around the $x_3$-axis of angles $2k\pi/n$, for $k\in \{1,\dots, n-1\}$. Thus, $\mathbb{A}_n(\mu)$ has a prismatic symmetry group of order $4n$.
\item
The planar geodesic $\mathbb{A}_n(\mu)\cap \{x_3=0\}$ is a closed strictly convex planar curve.
 \item
$\mathbb{A}_n(0)$ is a compact piece of a catenoid. $\mathbb{A}_n(\mu)$ is not rotational if $\mu \neq 0$.
\item
There exists $\ep>0$ such that, for every $\mu\in [0,\ep)$, the capillary minimal annulus $\mathbb{A}_n(\mu)$ is embedded in $\B^3$.
\end{enumerate}
\end{theorem}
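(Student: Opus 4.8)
The plan is to realize each $\mathbb{A}_n(\mu)$ as a normalized compact piece of one of the surfaces $\Sigma(r_1,r_2;\hat g_0)$ of Section \ref{sec:wei}, moving $(r_1,r_2)$ along the single level curve of the period map on which the annulus closes with the prescribed symmetry. I would fix $\hat g_0=1$ throughout, so that by Lemma \ref{lem:sigen} every surface is foliated by spherical curvature lines and is symmetric across both $x_2=0$ and $x_3=0$. Given $n>1$, Theorem \ref{th:cerouno} (see \eqref{per:boundary}) locates on the diagonal the point $(r_n,r_n)$, $r_n=\sqrt[3]{1-n^2}<0$, at which ${\rm Per}(r_n,r_n)=1/n$; by Theorem \ref{th:peri} the level set ${\rm Per}^{-1}(1/n)$ is a connected, regular, real analytic curve in $W$ meeting $\cD$ orthogonally there. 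I would parametrize its closure by $\mu\ge 0$, with $\mu=0$ at $(r_n,r_n)$, and set $(r_1,r_2)=(r_1(\mu),r_2(\mu))$.

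For each $\mu$ the annulus is produced as follows. By Proposition \ref{monocen} the center height $c_3(u)$ of the sphere $S(c(u),R(u))$ carrying $\psi(u,v)$ is a strictly decreasing diffeomorphism of $(0,\omega_1)$ onto $\R$, so there is a unique $u^*=u^*(\mu)$ with $c_3(u^*)=0$; then $S(c(u^*),R(u^*))=S(0,R(u^*))$ is centered at the origin, and the $x_3=0$ symmetry places the opposite boundary line $\psi(-u^*,v)$ on the same sphere. Since ${\rm Per}=1/n\in\Q$, Lemma \ref{lemper} closes the period, and the quotient of the slab $[-u^*,u^*]\times\R$ by \eqref{quotient} is a compact minimal annulus; the homothety of ratio $1/R(u^*)$ then carries its two boundary curves onto $\partial\B^3$, along which $\Sigma$ meets $\partial\B^3$ at the single constant angle $\theta(u^*)$ dictated by \eqref{radan}, and leaves the piece inside $\B^3$. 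This gives (1)--(2). Property (3) follows from Lemma \ref{lem:sigen} and Corollary \ref{rem:symmetries}: with $\hat g_0=1$ and irreducible period $1/n$, the symmetry group is the prismatic group $D_n\times\Z_2$ of order $4n$. For (4), the numerator $m=1$ of $\kappa=1/n$ is the rotation index of the locally convex central geodesic $\psi(0,v)$, forcing it to be a strictly convex closed curve. Real analyticity of the family is inherited from that of the level curve and of the elliptic data of Section \ref{sec:wei}.

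For (5) I would let $\mu\to 0$. As $(r_1,r_2)\to(r_n,r_n)$, the degeneration recorded in the proof of Theorem \ref{th:hei} applies: by \eqref{weidatalimit} the Weierstrass data converge to $\phi\equiv r_n$ and $g=\exp(z/r_n)$, exactly those of a catenoid of necksize $r_n^2$ with vertical axis. A short computation parallel to \eqref{alta}--\eqref{limh}, but tracking the zero of $c_3$ rather than of $\beta$, shows that $u^*(\mu)$ converges to a finite $u^*(0)>0$, so that $\mathbb{A}_n(0)$ is the catenoidal collar cut out by the centered sphere $S(0,R(u^*(0)))$: a compact piece of a catenoid. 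For $\mu\ne0$ the parameter lies off $\cD$, so $\Sigma$ is one of Wente's genuinely non-planar surfaces and $\mathbb{A}_n(\mu)$ is not rotational.

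The main obstacle is (6), and its delicate point is exactly this catenoidal degeneration. The plan is to use the Gauss map. For period $1/n$ the map $g$ traverses the unit circle $|g|=\hat g_0=1$ exactly once along $\psi(0,v)$ (numerator $m=1$) with $g'\neq0$, so it is a diffeomorphism onto the equator; at $\mu=0$ the catenoidal Gauss map $g=\exp(z/r_n)$ restricts to a diffeomorphism of the quotient collar $\{|u|\le u^*(0)\}$ onto a spherical band, hence the Gauss map of $\mathbb{A}_n(0)$ is a diffeomorphism onto its image. Being a diffeomorphism onto its image is an open condition under $C^1$ convergence of the immersions, so for $\mu$ in some $[0,\ep)$ the Gauss map of $\mathbb{A}_n(\mu)$ remains one; by the argument of Souam \cite{So} this forces $\mathbb{A}_n(\mu)$ to be a radial graph, in particular embedded in $\B^3$. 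The hard part is controlling this $C^1$ convergence across the degeneration, where the conformal domain blows up ($\omega_1\to\infty$, cf. \eqref{omein}); I would handle it by working on the fixed compact quotient annulus of \eqref{quotient}, on which the relevant slab stays at distance $\omega_1\to\infty$ from the poles of $\phi$, so that $\phi$, $g$ and the rescaled immersions converge uniformly together with their first derivatives. This yields $\ep>0$ with $\mathbb{A}_n(\mu)$ embedded for all $\mu\in[0,\ep)$, completing (6).
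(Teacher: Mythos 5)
Your construction of the family and the proofs of items (1)--(5) follow the paper's own route essentially verbatim: the level curve ${\rm Per}^{-1}(1/n)$ from Theorem \ref{th:peri}, the choice $\hat g_0=1$, the zero $u^*$ of $c_3$ from Proposition \ref{monocen}, the quotient \eqref{quotient}, and the catenoidal degeneration at $(\bar r_n,\bar r_n)$. The one point where you genuinely diverge is the final deduction of (6): the paper, once it knows the annuli converge on a fixed compact parameter domain to a compact piece of the embedded catenoid $C_n$, simply concludes that embeddedness persists for nearby parameters; you instead pass through injectivity of the Gauss map (an open condition in $C^1$) and Souam's radial-graph argument. Your route is workable near $\mu=0$, but it imports an extra ingredient (that a capillary minimal annulus of this type whose Gauss map is a diffeomorphism onto its image is a radial graph, which the paper only invokes informally in Section \ref{subsec:discussion} for the free boundary case) and buys nothing over the direct openness-of-embeddedness argument; note also that this Gauss-map criterion cannot be globalized along the level curve, or it would already settle the embeddedness conjecture of Section \ref{subsec:discussion2}.

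The place where your write-up is genuinely thin is the claim that ``a short computation parallel to \eqref{alta}--\eqref{limh}\dots shows that $u^*(\mu)$ converges to a finite $u^*(0)>0$.'' This is not a short computation; it is the heart of the paper's proof of (6), and it cannot be read off the explicit limit Weierstrass data \eqref{limitwdata} alone, because on the limit catenoid the pair $(\alfa,\beta)$ and equation \eqref{om1} degenerate (every horizontal circle of a catenoid meets \emph{some} centered sphere orthogonally). What is actually needed is: (i) the limit formula \eqref{limfin1} for $c_3(u)$ in terms of the solution $(\bar\alfa,\bar\beta)$ of the \emph{degenerate} system with $r_1=r_2=\bar r_n$; (ii) the fact that $\bar h(u)\to-\infty$ as $u\to\infty$, which rests on the orbit analysis of Section \ref{sec:dege} (the orbit tends to $(0,\bar r_n)$, whence $\bar\beta/\bar\alfa\to-\infty$); and (iii) a compactness/contradiction argument using the monotonicity of $c_3$ to conclude that the $u^*(\mu)$ stay bounded even though $\omega_1\to\infty$. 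Your subsequent claim that the relevant slab ``stays at distance $\omega_1\to\infty$ from the poles of $\phi$'' already presupposes this boundedness, so as written the argument is circular at exactly the step that requires work. You have pointed at the right machinery, but this step must be carried out, not asserted.
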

\begin{proof}
By Theorem \ref{th:peri}, the level sets of ${\rm Per}$ are regular, connected, real analytic curves, and for any $n\in \N$ the level curve ${\rm Per}^{-1}(1/n)$ intersects the diagonal $r_1=r_2$ at the point $(\bar{r}_n,\bar{r}_n)$, with $\bar{r}_n=\sqrt[3]{1-n^2}$. 

By Corollary \ref{rem:symmetries}, every $(r_1,r_2)\in {\rm Per}^{-1}(1/n)$ together with the choice of initial condition $\hat{g}_0=1$ for the Gauss map $g$ determines a minimal annulus $\Sigma_n^*=\Sigma_n^*(r_1,r_2)$ in $\R^3$ foliated by spherical curvature lines, with the symmetries specified in item (3) above, and so, in particular, symmetric with respect to the $x_3=0$ plane. The height with respect to $x_3=0$ of the centers $c(u)$ of the spheres where the spherical curves $\psi(u_0,v)$ of $\Sigma_n^*$ lie decreases from $\8$ to $-\8$ as we move $u_0$ from $0$ to the half-period $\omega_1$; see Proposition \ref{monocen}. Thus, there exists some intermediate value $u_n^*\in (0,\omega_1)$ such $c_3(u_n^*)=0$, and therefore the restriction of $\Sigma_n^*$ to $[-u_n^*,u_n^*]\times \R$ produces (via the quotient \eqref{quotient}) a compact minimal annulus $\mathcal{A}_n$ in a ball $B$ of $\R^3$, with center at some point of the plane $x_3=0$. 

Consider next the planar geodesic $\Sigma_n^*\cap \{x_3=0\}$ of this example. It is a closed, locally convex planar curve that has rotation index equal to $1$, since the period is $1/n$. Thus, it is globally convex, and the Gauss map along it defines a diffeomorphism onto the horizontal great circle $\S^2\cap \{x_3=0\}$. We note, however, that the larger, non-compact annulus $\Sigma_n^*$ loses its embeddedness as we approach $u\to \omega_1$, due to the apparition of a flat end at $(\omega_1,0)$ (note that $\phi$ has a pole at $\omega_1$). We want to understand next the embeddedness of the compact piece $\mathcal{A}_n$ of $\Sigma_n^*$ when $(r_1,r_2)$ is close to the diagonal.

As $(r_1,r_2)\flecha (\bar{r}_n,\bar{r}_n)$ along ${\rm Per}^{-1}(1/n)$, the meromorphic functions $\phi$ in \eqref{Falt} that define the minimal annuli $\Sigma_n^*$ converge to the constant $\bar{r}_n$, see the beginning of the proof of Theorem \ref{th:hei}. Thus, taking into account \eqref{omein}, the annuli $\Sigma_n^*$ converge uniformly on compact sets to a subset of the minimal annulus with Weierstrass data on the conformal cylinder $\C/(2n\omega_2 \Z)$ given by 
 \begin{equation}\label{limitwdata}
\phi(z)=\bar{r}_n, \hspace{0.5cm} g(z)={\rm exp}(z/\bar{r}_n),
 \end{equation}
where $\omega_2$ is as in \eqref{omein}, i.e. $\omega_2 = -i\pi \bar{r}_n/n$. These are the Weierstrass  data of a (singly-covered, embedded) catenoid $C_n$ with vertical axis and necksize $\bar{r}_n^2$.

Assume for one moment that the limit of the compact annuli $\mathcal{A}_n=\mathcal{A}_n(r_1,r_2)$ is a compact set of this limit catenoid $C_n$, as $(r_1,r_2)\to (\bar{r}_n,\bar{r}_n)$. Then, for $(r_1,r_2)\in {\rm Per}^{-1}(1/n)$ close enough to $(\bar{r}_n,\bar{r}_n)$, all the annuli $\mathcal{A}_n$ are embedded. Thus, they are compact embedded minimal annuli foliated by spherical curvature lines, whose boundary lies in a ball of $\R^3$. By Theorem \ref{2centers}, they have a prismatic symmetry group $D_{n}\times \Z_2$, see Corollary \ref{rem:symmetries}. 

Therefore, after a homothety and a translation, we would obtain a real analytic family $\{\mathbb{A}_n(\mu)\}$, $\mu\geq 0$, of minimal annuli in $\B^3$ with all the properties stated in Theorem \ref{capillary}. Here the  parameter $\mu$ of this family is just a parametrization of the level curve ${\rm Per}^{-1}(1/n)$, so that $\mu=0$ corresponds to the point $(\bar{r}_n,\bar{r}_n)$.

Thus, it only remains to show that the limit of the annuli $\cA_n$ is a compact set of $C_n$. For that, it suffices to show that the values $u_n^*\in (0,\omega_1)$ that define $\cA_n$ as a subset of $\Sigma_n^*$ are bounded in $(0,\8)$; here, one should recall that $\omega_1\to \8$ as $(r_1,r_2)\to (\bar{r}_n,\bar{r}_n)$, see \eqref{omein}.

Let $(\alfa(u),\beta(u))$ be the solution to system \eqref{system} associated to $\Sigma_n^*$, which is determined by the initial values $\alfa(0)=\beta(0)=0$, with $\alfa'(0)>0$. For any fixed $u\in (0,\omega_1)$, the height $c_3(u)$ of the center of the sphere $S(c(u),R(u))$ where $\psi(u,v)$ lies is given by \eqref{3corn}. 
Also note that $c_3(u_n^*)=0$, by definition. 

We now make $(r_1,r_2)\flecha (\bar{r}_n,\bar{r}_n)$ in that expression. The pair $(\alfa(u),\beta(u))$ converges to the solution $(\bar{\alfa}(u),\bar{\beta}(u))$ to system \eqref{system} with the initial conditions $\bar{\alfa}(0)=\bar{\beta}(0)=0$, for the choice of constants $r_1=r_2=\bar{r}_n$, $r_3= 1/\bar{r}_n^2$.
Then, by \eqref{limitwdata} and \eqref{3corn} we obtain
\begin{equation}\label{limfin1}
c_3(u) \longrightarrow \bar{h}(u):= \bar{r}_n u + \frac{2-\bar{\beta}(u) \sinh(u/\bar{r}_n)}{\bar{\alfa}(u) \cosh(u/\bar{r}_n)}.
\end{equation}
We now prove that the function $\bar{h}(u)$ in \eqref{limfin1} satisfies $\bar{h}(u)\to -\8$ as $u\to \8$. 

Consider the system \eqref{system2} for $(s(\landa),t(\landa))$ associated to $(\bar{\alfa}(u),\bar{\beta}(u))$. In our situation, due to the double root of the polynomial $q(x)=-(x-\bar{r}_n)^2(x-1/\bar{r}_n^2)$, we easily obtain from \eqref{repa} and \eqref{system2} that $u(\landa)\to \8$ as $\landa\to \8$. In addition, recall from Section \ref{sec:dege} that any orbit $(s(\landa),t(\landa))$ of \eqref{system2} converges to $(0,r_2)$ as $\landa\to \8$ in that degenerate $r_1=r_2$ case (for us, $r_2=\bar{r}_n)$. Taking this into account, by \eqref{change} we have
$$\lim_{u\to \8} \frac{\bar{\beta}(u)}{\bar{\alfa}(u)} = \lim_{\landa\to \8}\left(-\frac{1}{s(\landa)} - \frac{1}{t(\landa)}\right) = -\8.$$This implies that $\bar{h}(u)\to -\8$ as $u\to \8$, as claimed (recall that $\bar{r}_n<0$). In particular, there exists $\bar{u}>0$ such that $\bar{h}(u)<0$ for every $u\geq \bar{u}$.

Now, we come back to the values $u_n^*$ that we wanted to show are bounded as $(r_1,r_2)\to (\bar{r}_n,\bar{r}_n)$. By their own definition, we have $c_3(u_n^*)=0$. If the $u_n^*$ were not bounded, we could consider a sequence $\{(r_1^k,r_2^k)\}_{k\in \N}\to (\bar{r}_n,\bar{r}_n)$ inside the level curve ${\rm Per}^{-1}(1/n)$ and values $\varrho_k \in (0,u_n^*)$, where here $u_n^*=u_n^*(r_1^k,r_2^k)$, such that $\varrho_k\to \bar{u}$ as $k\to \8$. By the monotonicity of $c_3(u)$, see Proposition \ref{monocen}, we have that $c_3(\varrho_k)>0$ for every $k$, since $\varrho_k<u_n^*(r_1^k,r_2^k)$. Thus, by \eqref{limfin1}, $\bar{h}(\bar{u})\geq 0$, a contradiction.

This contradiction shows that the $u_n^*$ must be bounded. As discussed previously, this finishes the proof of Theorem \ref{capillary}.
\end{proof}

\subsection{Discussion and open problems for the capillary case}\label{subsec:discussion2}

While the geometry of free boundary minimal surfaces in $\B^3$ has received a great number of contributions in the past decade, the more general situation of capillary minimal surfaces in $\B^3$ has remained, in contrast, largely unexplored. The examples that we construct in Theorem \ref{capillary} seem to indicate that there should exist an interesting bifurcation theory for capillary minimal surfaces in $\B^3$, analogous to the CMC case. For instance, we expect the following behavior regarding the minimal annuli of Theorem \ref{capillary}.

\noindent {\bf Conjecture.} \emph{The capillary minimal annuli $\mathbb{A}_n(\mu)$ in $\B^3$ constructed in Theorem \ref{capillary} are embedded for every $n>1$ and every $\mu\geq 0$. Moreover, as $\mu\to \8$, each family $\mathbb{A}_n(\mu)$ converges to a \emph{necklace} of $n$ flat vertical disks in $\B^3$ whose projection is a regular $n$-polygon inscribed in the unit circle of the $x_3=0$ plane.}

\begin{figure}
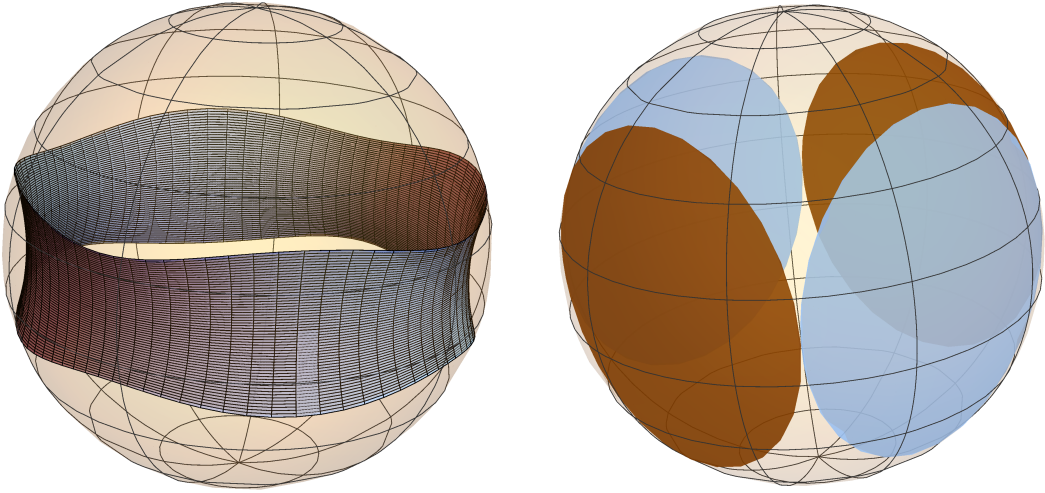

\begin{center}
\includegraphics[height=6.8cm]{Periodo1-4.pdf} \hspace{0.2cm}
\includegraphics[height=6.8cm]{4circles.pdf} 
\caption{Left: an embedded capillary minimal annulus $\mathbb{A}_n(\mu)$ with $n=4$ and $\mu$ close to $0$. Right: expected limit behavior of the minimal annuli $\mathbb{A}_4(\mu)$ as $\mu\to \8$.}\label{fig:eje5}
\end{center}
\end{figure}

In the $n=2$ case, the above conjecture claims that the minimal annuli $\A_2(\mu)$ are always embedded and converge as $\mu\to \8$ to a doubly covered vertical equator of $\B^3$. In particular, an affirmative answer to the conjecture would imply the existence of non-rotational embedded capillary minimal annuli in $\B^3$ with intersection angles $\theta < \pi/2$ converging to $\pi/2$.

After Theorem \ref{capillary}, one can update the problem by Wente discussed in the introduction as follows: \emph{is any embedded capillary minimal annulus in $\B^3$ always foliated by spherical curvature lines?} Observe that, in the view of Theorem \ref{th:uni}, this problem can be regarded as a generalization of the critical catenoid conjecture. 

Each of the minimal annuli $\mathbb{A}_n(\mu)$ has at least three planes of reflective symmetry. Thus, McGrath's characterization \cite{M} of the critical catenoid in the free boundary case does not extend to the general capillary situation. Motivated by \cite{CSW}, it is natural to ask if any embedded capillary minimal annulus in $\B^3$ with prismatic symmetry group of order $4n$ is one of the annuli $\mathbb{A}_n(\mu)$.

In this line, J. Choe proposed to the authors the following conjecture: \emph{any embedded capillary minimal annulus in $\B^3$ that is symmetric with respect to the three coordinate planes of $\R^3$ is either a compact piece of a catenoid, or one of the minimal annuli $\mathbb{A}_{2n}(\mu)$.}

\section{Uniqueness: proof of Theorem \ref{th:uni}}\label{sec:uniqueness}

Let $\Sigma$ be a compact immersed minimal annulus in $\R^3$ whose boundary $\parc \Sigma$ is composed of two closed curvature lines of $\Sigma$. It is then well known that $\Sigma$ has no umbilic points, and that there exists a foliation $\cF$ of $\Sigma$ by regular curvature lines, so that both connected components of $\parc \Sigma$ are elements of $\cF$. We say that $\Sigma$ is \emph{foliated by spherical curvature lines} if each curve of the foliation $\cF$ lies in some sphere of $\R^3$.

We will prove in Theorem \ref{th:embedan} below that any free boundary minimal annulus $\Sigma$ embedded in $\B^3$ and foliated by spherical curvature lines has two planes of symmetry. Therefore, by Seo's theorem \cite{Seo}, $\Sigma$ is the critical catenoid, and this proves Theorem \ref{th:uni}.

Note that in Theorem \ref{th:embedan} we do not really assume a free boundary condition.

\begin{theorem}\label{th:embedan}
Let $\Sigma$ be a minimal annulus embedded in the unit ball $\B^3$, with  $\parc\Sigma\subset \S^2$ and foliated by spherical curvature lines. Then, $\Sigma$ is symmetric with respect to two planes of $\R^3$.
\end{theorem}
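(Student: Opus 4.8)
The plan is to exhibit two reflection planes for $\Sigma$; once this is done the statement is proved. Throughout, let $L\subset\R^3$ be the common line containing all the centers $c(u)$ of the spheres $S(c(u),R(u))$ that carry the spherical curvature lines $\psi(u,\cdot)$, and after a rigid motion assume $L$ is the $x_3$-axis. I write $\Pi_{\mathrm m}$ for a \emph{meridian} plane (one containing $L$) and $\Pi_{\mathrm e}$ for an \emph{equatorial} plane (one orthogonal to $L$). The first target is a meridian symmetry plane, which I expect to hold for every such surface whether or not it is embedded; the second is an equatorial symmetry plane, and this is where embeddedness will be decisive.

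First I would establish the meridian symmetry. For each fixed $u$ the conformal factor $e^{\omega(u,\cdot)}$ along the leaf $\psi(u,\cdot)$ obeys the autonomous first-order equation \eqref{pos}, namely $4(e^{\omega})_v^2=p(u,e^{\omega})$; being governed by a first integral, $v\mapsto e^{\omega(u,v)}$ is symmetric about each of its critical points. Choosing such a critical value analytically in $u$ and translating it to $v=0$, the map $(u,v)\mapsto(u,-v)$ preserves the metric $e^{2\omega}|dz|^2$ and the second fundamental form $II=-du^2+dv^2$ (which has no mixed term), so by the fundamental theorem of surface theory it is realised by an improper ambient isometry $\Phi$ of $\R^3$ fixing the curve $\psi(\cdot,0)$ pointwise. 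Since $\Phi$ maps each leaf $\gamma_u$ onto itself it fixes the sphere $S(c(u),R(u))$, hence its center $c(u)\in L$; because the height $c_3(u)$ is strictly monotone by Proposition~\ref{monocen}, the centers sweep a nondegenerate segment of $L$, and the only improper involution fixing them all is the reflection across a plane $\Pi_{\mathrm m}\supset L$. As $\Sigma$ is invariant under $\Phi$, so is $\parc\Sigma\subset\S^2$, which forces $\Pi_{\mathrm m}$ to pass through the center of $\S^2$.

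Next I would produce an equatorial symmetry plane, and I expect this to be the main obstacle. The natural candidate is a horizontal plane $\Pi_{\mathrm e}=\{x_3=t_0\}$, and by Schwarz reflection it suffices to locate an interior curvature line that is a bounded planar geodesic lying in such a plane: this is exactly an interior $u_0$ with $\alpha(u_0)=0$ and $\alpha'(u_0)\ne0$, for then $R(u_0)=\infty$ by \eqref{radan}, the leaf $\psi(u_0,\cdot)$ is planar and bounded by Remark~\ref{rem21}, and a planar geodesic of a minimal surface always yields reflective symmetry across its plane. Here embeddedness enters: it rules out the immersed ``flower'' configurations of Theorem~\ref{main} by forcing the Gauss map along the central planar curvature line to have rotation index one, so that $\Sigma$ is a radial graph; combined with the strict monotonicity of $c_3(u)$, which linearly orders the leaves by height, and with the fact that both boundary leaves lie on $\S^2$, this should single out a unique balanced level $t_0$ about which the whole configuration is symmetric.

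If pinning down the planar leaf directly proves awkward, the fallback is an Alexandrov moving-plane argument with the family $\{x_3=t\}$: slide the plane down from above, reflect the upper part of $\Sigma$, and apply the interior and boundary maximum principles for the minimal surface equation at the first point of contact to conclude invariance under reflection across the critical plane $\Pi_{\mathrm e}$. The hard part in either route is the interaction with the boundary sphere, precisely because no free-boundary hypothesis is assumed: I must use that $\parc\Sigma\subset\S^2$ together with the invariance of $\S^2$ under reflection in any plane through its center, and show that the monotone centers force the two boundary leaves to be mirror images across a horizontal plane through the origin. Once both $\Pi_{\mathrm m}$ and $\Pi_{\mathrm e}$ are in hand, $\Sigma$ has two planes of symmetry, as required (and Theorem~\ref{th:uni} then follows by Seo's theorem \cite{Seo}).
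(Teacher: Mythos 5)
Your proposal diverges from the paper's route and, as written, has genuine gaps at both of its key steps.

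\emph{The meridian plane.} Your intrinsic argument needs the critical points of $v\mapsto e^{\omega(u,v)}$ to be aligned along a \emph{single} vertical line $v=v_0$ for all $u$ simultaneously; only then is $(u,v)\mapsto(u,2v_0-v)$ an isometry of the whole surface. The autonomous ODE \eqref{pos} gives you, for each fixed $u$, symmetry of $e^{\omega(u,\cdot)}$ about \emph{its own} critical points, but these a priori drift with $u$, and "translating to $v=0$" by a $u$-dependent amount destroys the conformal structure. The alignment is true for these surfaces, but one sees it from the Weierstrass data ($\wp(\bar z)=\overline{\wp(z)}$ and $g(\bar z)=\overline{g(z)}$, since $\phi$ and $g$ are real on $\R$), which is exactly how the paper obtains the $x_2=0$ symmetry; your argument does not supply this.

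\emph{The equatorial plane.} Here the proposal breaks down. A leaf with $\alpha(u_0)=0$ and $\alpha'(u_0)\neq 0$ is a \emph{bounded planar curvature line}, but it is a planar \emph{geodesic} only if the surface meets that plane orthogonally, which by \eqref{radan} requires $\beta(u_0)=0$ as well. Nothing in your argument produces a $u_0$ with $\alpha(u_0)=\beta(u_0)=0$ simultaneously; indeed, arranging $\beta=0$ at the right place is precisely the hard analytic content of Sections \ref{sec:orbits}--\ref{sec:height} of the paper, and the paper explicitly expects examples with no central planar geodesic. The Alexandrov fallback also fails: with only $\parc\Sigma\subset\S^2$ and no free boundary or capillary hypothesis, there is no boundary comparison at a first touching point on $\parc\Sigma$, and reflecting $\S^2$ across a horizontal plane not through the origin does not preserve $\S^2$. (If this moving-plane argument worked, it would essentially prove the critical catenoid conjecture outright.)

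\emph{What the paper actually does.} Both symmetry planes in the paper's proof are \emph{vertical}. Step 1 uses the monotonicity of $c_3(u)$ (your observation too) only to produce a planar curvature line, which pins the Weierstrass data to the form \eqref{weida10} after ruling out rhombic and degenerate lattices (Steps 2--3). The crux is Step 4: embeddedness forces the central planar curvature line to be globally convex, hence the Gauss map along it has rotation index $1$, and a delicate computation with $\zeta$, $\sigma$, the Legendre relation and the injectivity of $\wp$ on the period rectangle shows the rotational period satisfies $n>1$. This yields a dihedral group of vertical reflection planes, hence at least two of them. Your proposal contains no substitute for this elliptic-function step, which is where the theorem is really proved.
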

\begin{proof}
To start, let us consider the case that $\Sigma$ is foliated by \emph{planar} curvature lines. In that situation, each of the boundary curves of $\Sigma$ is a planar curve contained in the unit sphere, i.e., it is a circle, and $\Sigma$ intersects $\S^2$ with constant angle along it. So, $\Sigma$ is a compact piece of a catenoid, by uniqueness of the solution to Björling's problem. The result is then trivial in this case.

So, from now on, we assume that $\Sigma$ is not foliated by planar curvature lines. Therefore, by our discussion in Section \ref{sec:prelim}, and up to an ambient isometry and a homothety, we can represent $\Sigma$ (which after these normalizations is now is embedded in some unspecified ball of $\R^3$) as a subset of a complete minimal surface $\Sigma'$ parametrized as in \eqref{weipar0} in terms of Weierstrass data given by 
 \begin{equation}\label{weida9}
\phi(z)=b -4 \wp (z), \hspace{1cm} g(z)=\hat{g}_0 \,  {\rm exp} \left(\int_0^z \frac{1}{\phi(w)} dw \right).
 \end{equation} 
Here, $\hat{g}_0>0$, $b\in \R$ satisfies the cubic equation \eqref{cubicb} and $\wp(z)$ is a (possibly degenerate) Weierstrass P-function that satisifies the ODE 
\begin{equation}\label{odewp}
\wp'(z)^2 = 4 \wp(z)^3 -g_2 \wp(z) - g_3
\end{equation}
with respect to some $g_2,g_3\in \R$.

More specifically, if $\Lambda$ is the set of poles of $\wp(z)$ in $\C$, then $\Sigma'$ is given by a conformal parametrization $\psi(u,v):\C\setminus \Lambda \flecha \R^3$ as in \eqref{conf} so that the $v$-curves are spherical curvature lines, contained in spheres $S(c(u),R(u))$. We allow the possibility that $R(u_0)=\8$ for some $u_0$, which corresponds to the situation in which $\psi(u_0,v)$ is a planar curvature line. All the centers $c(u)$ lie in a common vertical line of $\R^3$. The minimal annulus $\Sigma$ is then obtained as the quotient of an adequate closed subset of $\Sigma'$ by the deck transformation $(u,v)\mapsto (u,v+T)$ for some $T>0$. In particular, $\psi(u,v)$ is $T$-periodic with respect to $v$.

So, we view the universal cover of $\Sigma$ as parametrized by $\psi(u,v):[c,d]\times \R\flecha \R^3$, with $\psi(u,v+2T)=\psi(u,v)$ for some $T>0$. By the compactness of $\Sigma$, there are no poles of $\phi$ (i.e., no points of $\Lambda$) in the vertical strip $[c,d]\times i\R$, since any such point would generate a flat end for $\Sigma$.

We divide the proof into steps.

{\bf Step 1:} \emph{$\Sigma$ has a planar curvature line $\psi(u_0,v)$, for some $u_0\in [c,d]$.}

To see this, we first note that by \eqref{radan}, the planar curvature lines of $\Sigma$ correspond to the values $u$ where $\alfa(u)=0$. Assume that $\alfa(u)\neq 0$ for every $u\in [c,d]$. The height of the centers $c_3(u)$ of the curvature lines $\psi(u,v)$ satisfy \eqref{wentecenter} and \eqref{wence}. Thus, $c_3(u)$ is monotonic and finite when $u\in [c,d]$. In particular, $\Sigma$ cannot lie in the unit ball unless at least one of its two boundary curves is contained in two spheres of different centers along the $x_3$-axis. This is only possible in the present situation if that intersection is a circle, and so $\Sigma$ is a compact piece of a catenoid, again the by uniqueness of the Björling problem. Since our assumption was that $\Sigma$ is not foliated by planar curvature lines, we reach a contradiction. This proves Step 1.

{\bf Step 2:} \emph{The modular discriminant $\Delta_{\rm mod}:=g_2^3 -27 g_3^2$ cannot be negative.}

Up to a horizontal translation in the $(u,v)$-plane, we may assume that the planar curvature line of $\Sigma$ whose existence was shown in Step 1 is placed along the $i\R$ axis, i.e., $\psi(0,v)$ is such a curvature line. After this translation, $\Sigma$ is parametrized by the Weierstrass data 
 \begin{equation}\label{weida17}
\phi(z)=b -4 \wp (z+a), \hspace{1cm} g(z)=\hat{g}_0 \,  {\rm exp} \left(\int_0^z \frac{1}{\phi(w)} dw \right),
 \end{equation} 
for some $a\in \R$. Now, since $\Sigma$ intersects the $x_3=0$ plane at a constant angle along $\psi(0,v)$, it follows that $|g(iv)|\equiv \hat{g}_0$, constant along $i\R$. This implies by $g/g'=\phi$ and $b\in \R$ that $\phi(iv) \in \R$ for every $v\in \R$. Therefore, $\wp(a+iv)\in \R$ for every $v\in \R$. 

But now we recall that, as explained in Section \ref{sec:were}, the condition $\Delta_{\rm mod}<0$ implies that $\Lambda$ is a rhombic lattice. In such a lattice, it is well known that $\wp(z)$ only takes real values along the horizontal and vertical lines in $\C$ that pass through points of $\Lambda$. Therefore, the line $a+i\R$ intersects the lattice $\Lambda$. So, by \eqref{weida17}, $\phi$ has poles along the $i\R$ axis. Since at those points $\Sigma$ would have flat ends, this contradicts the compactness of $\Sigma$. This rules out the case $\Delta_{\rm mod}<0$.

{\bf Step 3:} \emph{$\Delta_{\rm mod}$ cannot be zero.}

If $g_2=g_3=0$, then $\wp(z)=1/z^2$, which only takes real vales along $\R$ and $i\R$. Thus, the argument of Step 2 applies.

If $g_2^3 = 27 g_3^2 \neq 0$, the function $\wp(z)$ is singly periodic, with either a real or a purely imaginary period. Since $\psi(u,v)$ is $T$-periodic in the $v$-direction, we obtain from the form of the Weierstrass data \eqref{weida9} that this period is purely imaginary, i.e., $\wp(z+\omega_0)=\wp(z)$ for some $\omega_0\in i\R$. The degenerate Weiestrass function $\wp(z)$ is explicitly given in these conditions by (see \cite{Ch}, p. 46)
$$\wp(z) = \left(\frac{\pi}{\omega_0}\right)^2 \frac{1}{\sin^2(\pi z/\omega_0)}- \frac{1}{3} \left(\frac{\pi}{\omega_0}\right)^2.$$ It follows from this expression that, again, $\wp(z)$ cannot be simultaneously real and finite along a vertical line, and the argument of Step 2 applies. This shows that $\Delta_{\rm mod}=0$ is impossible for $\Sigma$.

{\bf Step 4:} If \emph{$\Delta_{\rm mod}>0$, then $\Sigma$ has two vertical symmetry planes.}
 
 Arguing as in Step 2, after a translation in the $u$-variable, we parametrize $\Sigma$ by the data \eqref{weida17}, so that $\psi(0,v)$ is a planar curvature line. In this way, $\wp(a+iv)\in \R$ for every $v\in \R$. Now, this time, by $\Delta_{\rm mod}>0$, the set $\Lambda$ is a rectangular lattice in $\C$, with generators $\{2\omega_1,2\omega_2\}$, being $\omega_1>0$ and $\omega_2 \in i\R$, with ${\rm Im}(\omega_2)>0$; see Section \ref{sec:were}.
 
In this situation, it is well known that $\wp(z)$ is real along a vertical line $a+i\R$ if and only if $a\in \R$ is a multiple of the real half-period $\omega_1$, i.e. $a$ is of the form $j\omega_1$ for some $j\in \Z$. If $j$ is even, the line $a+i\R$ intersects the lattice $\Lambda$, and this contradicts the compactness of $\Sigma$ as in Step 2. So, $j$ must be odd, and by periodicity we can assume $j=1$, i.e., $a=\omega_1$. So, $\Sigma$ can be parametrized by $\psi(u,v):[c,d]\times \R\flecha \R^3$ in terms of the Weierstrass data 
 \begin{equation}\label{weida10}
\phi(z)=b -4 \wp (z+\omega_1), \hspace{1cm} g(z)=\hat{g}_0 \,  {\rm exp} \left(\int_0^z \frac{1}{\phi(w)} dw \right),
 \end{equation} 
where $[c,d]$ is contained in $(-\omega_1,\omega_1)$, since $\phi(z)$ cannot have poles on $[c,d]\times i\R$. Also, recall that $\psi(u,v)$ is $T$-periodic in the $v$-direction. This means by \eqref{weida10} that $T=2n {\rm Im}(\omega_2)$ for some $n\in \N$. Note that, in particular, $g(2n \omega_2)=g(0)$.

Consider next the planar curvature line $\gamma(v):=\psi(0,v)$, which is contained in the $x_3=0$ plane. Since the principal curvatures of $\Sigma$ along the $v$-curves are $\kappa_2=e^{-2\omega}>0$, see Section \ref{sec:prelim}, we see that $\gamma(v)$ is a locally convex planar curve. Since $\Sigma$ is embedded, $\gamma(v)$ is thus globally convex. Since $|g(iv)|\equiv \hat{g}_0$ is constant, this global convexity implies that the Gauss map of $\Sigma$ along $\gamma(v)$ lies in a circle, and has rotation index equal to $1$. In other words, the map $g(iv):[0,2n{\rm Im}(\omega_2))\flecha \C$ is a regular injective parametrization of the circle $|z|=\hat{g}_0$ in $\C$. 

Now, since $\phi(z)$ is $2\omega_2$-periodic, it follows from \eqref{weida10} that, in order to have $g(2n\omega_2)=g(0)$, by the rotation index discussion above, it must happen that $g(2\omega_2)=g(0)e^{2\pi i/n}$.

Assume for one moment that $n>1$ holds. In that situation, by the form of the Weierstrass data \eqref{weida10}, the annulus $\Sigma$ is invariant by the rotations around the $x_3$-axis of angles $2k\pi/n$, with $k\in \{1,\dots, n-1\}$. Also, since both $\phi,g$ are real along the real axis, it also follows from the Weierstrass representation that $\Sigma$ is invariant by reflection with respect to the plane $x_2=0$. Thus, $\Sigma$ is invariant by at least two reflections with respect to vertical planes of $\R^3$ containing the $x_3$-axis. This would prove Theorem \ref{th:embedan}.

We next prove that $n>1$. For this, we will consider below two separate cases.

\begin{figure}
\begin{center}
\includegraphics[height=4.2cm]{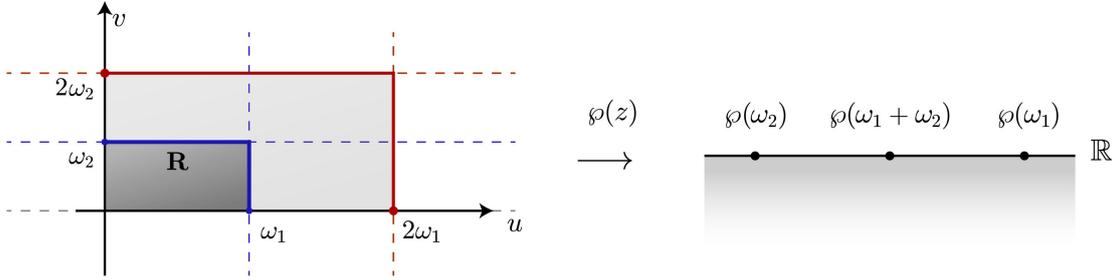} 
\caption{Values of the Weierstrass function $\wp(z)$ along the rectangle $\mathbf{R}$.}\label{fig:fig5}
\end{center}
\end{figure}

Let $\mathbf{R}$ be the boundary of the rectangle generated by the half-periods, i.e., the rectangle in $\C$ with vertices $\{0,\omega_1,\omega_2,\omega_1+\omega_2\}$. It is a standard result of Weierstrass functions that $\wp(z)$ maps $\mathbf{R}$ bijectively onto the extended real line $\R\cup \{\8\}$; see Figure \ref{fig:fig5}. In particular, there exists a unique value $\mu \in \mathbf{R}$ such that $4\wp(\mu)=b$, where $b\in \R$ is the constant appearing in \eqref{weida10}. If we now use \eqref{odewp} and \eqref{cubicb}, we deduce that $\wp'(\mu) = \pm 1/4$. Since $\wp'(z)\in i\R$ along the vertical sides of $\mathbf{R}$, this implies that either $\mu \in (0,\omega_1)$, or $\mu$ lies in the open segment of $\mathbf{R}$ between $\omega_2$ and $\omega_1+\omega_2$; here, we should recall that $\wp'(z)=0$ at the vertices $\omega_1,\omega_2$ and $\omega_1 + \omega_2$ of $\mathbf{R}$.

Consider first of all the case that $\mu=x^*+\omega_2$, where $x^*\in (0,\omega_1)$. Then, $\wp'(\mu)=1/4$. In that case, $g(z)$ can be represented as in Lemma \ref{gex}, i.e. by \eqref{w80}. Since $g(2\omega_2)=g(0)e^{2\pi i/n}$, the condition $n=1$ is equivalent to $g(2\omega_2)=g(0)$. Now, by an application of \eqref{forsigma} for $z:=\mu-(\omega_1+2 \omega_2)$, $j=1$ and $k=2$, we deduce after a computation from \eqref{w80}, \eqref{w100} and \eqref{ffsig} that the condition $g(2\omega_2)=g(0)$ is equivalent to
\begin{equation}\label{condiper1}
\omega_2 \zeta(\mu) - \mu \zeta(\omega_2)\in  \frac{i\pi}{2} \Z.
\end{equation} 
We should point out here that any number of the form $\omega_2 \zeta(z)-z \zeta(\omega_2)$, where $z=x+\omega_2$, $x\in \R$, is always purely imaginary.

In order to see that $n>1$, we consider the function $$f(x)=\frac{2}{\pi i} (\omega_2 \zeta(x+\omega_2) - (x+\omega_2) \eta_2) :(0,\omega_1)\flecha \R,$$ where were are using the standard notation $\eta_2:=\zeta(\omega_2)\in i\R$. We have $f(0)=0$, and $f(\omega_1)=1$, due to the Legendre relation $\omega_2 \zeta(\omega_1+\omega_2) - (\omega_1+\omega_2)\eta_2 = \frac{i\pi}{2}$, see e.g. \cite[p. 446]{WW}. Now, let us check that $f'>0$, what would prove that \eqref{condiper1} cannot hold, and thus, that $n>1$, as wished.

Otherwise, there would exist $x_0\in (0,\omega_1)$ such that $\wp(x_0+\omega_2)=-\zeta'(x_0+\omega_2)=-\eta_2 /\omega_2$. But on the other hand, by the mean value theorem applied to $\zeta(z)$ over the vertical segment of $\mathbf{R}$ of points of the form $\omega_1+y$, $y\in (0,\omega_2)\subset i\R$, there exists some $\omega_1+y_0$ where $\wp(\omega_1+y_0)=-\eta_2/\omega_2$. For that, we use that $\zeta(\omega_1+\omega_2)=\zeta(\omega_1)+\zeta(\omega_2)$. Since $\wp$ is injective over the rectangle $\mathbf{R}$, we conclude that such a point $x_0+\omega_2$ cannot exist.

Consider next the remaining case $\mu\in (0,\omega_1)$. This time $\wp'(\mu)=-1/4$ and by a similar computation to the one in Lemma \ref{gex} we obtain the following expression for $g$: $$g(z) = g_0\,  {\rm exp} \left(2 \zeta(\mu) (z+\omega_1)\right) \frac{\sigma(\mu-(z+\omega_1))}{\sigma(\mu + (z+\omega_1))},$$ where this time, due to \eqref{ffsig}, we have $$g(0)=-g_0 \, {\rm exp} \big(2(\zeta(\mu) \omega_1 - \mu \zeta(\omega_1))\big).$$ The condition that $n=1$, i.e., that $g(2\omega_2)=g(0)$, is then again written as \eqref{condiper1}, by an application of \eqref{forsigma} for $z:=\mu+\omega_1+2 \omega_2$, $j=-1$ and $k=-2$.

This time, we consider the function $$f(x)=\frac{2}{\pi i} \left( \omega_2 \,\zeta(x) - x\, \eta_2\right):(0,\omega_1)\flecha \R.$$ We have now $f(x)\to \8$ as $x\to 0^+$ and $f(\omega_1)=1$, again by the Legendre relation for Weierstrass functions. Moreover, if $f'(x_0)=0$ for some $x_0\in (0,\omega_1)$, then we would have $\wp(x_0)=-\eta_2 /\omega_2$. The same argument as in the first case shows that this contradicts the fact that $\wp$ is injective over $\mathbf{R}$. Thus, $f'<0$ on $(0,\omega_1)$, and in particular $f(\mu)> 1$. Therefore, \eqref{condiper1} does not hold, and so $n>1$ as wished. This completes the proof of Theorem \ref{th:embedan}.
\end{proof}

\def\refname{References}

\vskip 0.2cm

\noindent Isabel Fernández

\noindent Departamento de Matemática Aplicada I,\\ Instituto de Matemáticas IMUS \\ Universidad de Sevilla (Spain).

\noindent  e-mail: {\tt isafer@us.es}

\vskip 0.2cm

\noindent Laurent Hauswirth

\noindent Laboratoire d'Analyse et de Matématiques Apliquées \\ Université Gustave Eiffel, Paris (France)).

\noindent  e-mail: {\tt laurent.hauswirth@u-pem.fr}

\vskip 0.2cm

\noindent Pablo Mira

\noindent Departamento de Matemática Aplicada y Estadística,\\ Universidad Politécnica de Cartagena (Spain).

\noindent  e-mail: {\tt pablo.mira@upct.es}

\vskip 0.4cm

\noindent This research has been financially supported by Project PID2020-118137GB-I00 funded by MCIN/AEI /10.13039/501100011033.

\end{document}